\documentclass[11pt]{article}

\usepackage[utf8]{inputenc}
\usepackage[T1]{fontenc}

\usepackage[a4paper,left=21mm,right=21mm,top=24mm,bottom=28mm]{geometry}

\usepackage{amsmath,amsfonts,amssymb,amsthm,mathtools}

\usepackage{graphicx}
\graphicspath{{figs/}}

\usepackage{booktabs}
\usepackage{bm}
\usepackage{array}
\usepackage{tabularx}

\usepackage{verbatim}
\usepackage{float}
\usepackage{xcolor}
\usepackage{xurl}
\usepackage{microtype}

\usepackage{hyperref}
\hypersetup{
  colorlinks=true,
  linkcolor=blue,
  citecolor=blue,
  urlcolor=blue
}

\usepackage{fancyhdr}
\usepackage{tikz}
\usepackage{subcaption}

\newcommand{\R}{{\mathbb R}}

\newcommand{\vep}{\varepsilon}

\newcommand{\ka}{\kappa}

\newtheorem{thm}{Theorem}[section]

\newtheorem{lem}{Lemma}[section]

\newtheorem{exa}{Example}[section]

\theoremstyle{definition}

\theoremstyle{remark}
\newtheorem{rem}{Remark}[section]

\numberwithin{equation}{section}

\newcommand{\be}{\begin{eqnarray}}
\newcommand{\en}{\end{eqnarray}}
\newcommand{\ben}{\begin{eqnarray*}}
\newcommand{\enn}{\end{eqnarray*}}

\newcommand{\beq}{\begin{equation}}
\newcommand{\eeq}{\end{equation}}

\newcommand{\bef}{\begin{figure}[H]}
\newcommand{\eef}{\end{figure}}

\newcommand{\bet}{\begin{tikzpicture}}
\newcommand{\eet}{\end{tikzpicture}}

\def\bx{\mathbf{x}}

\def\bv{\mathbf{v}}

\def\bn{\mathbf{n}}

\newcommand{\PMLDE}{PML-DE}
\newcommand{\PMLDES}{\PMLDE{} system}

\newcommand{\CN}{CN}

\title{\bf A Structure-Preserving PML-Domain-Embedding Method for
  Acoustic Wave Scattering
by Moving Objects}

\author{Xuelong Gu and Qi Wang}
\date{}

\begin{document}
\maketitle

\begin{abstract}
	We develop a structure-preserving computational framework for acoustic wave scattering by moving objects, comprising a new PML-domain-embedding model and a compatible numerical approximation. The model couples a perfectly matched layer (PML), used to truncate the acoustic wave equation, with a domain-embedding formulation that represents moving objects on a fixed computational domain. The resulting PML-domain-embedding (\PMLDE{}) system enables moving-boundary scattering problems to be solved without remeshing.
	Using matched asymptotic expansions, we show that the diffuse-interface formulation converges to the corresponding sharp-interface system as the interface thickness tends to zero. We then construct an energy-dissipation-rate-preserving finite-difference scheme for the \PMLDE{} system. To improve computational efficiency, the scheme is combined with hierarchical local refinement informed by the moving-object location, the fixed PML region, and the evolving wave dynamics, all within the fixed computational domain.
	Numerical experiments demonstrate the accuracy of the computed scattering solutions, the effectiveness of the absorbing layer and object-embedding strategy, and the efficiency of the adaptive algorithm. The proposed framework provides a practical and robust computational approach for engineering applications involving complex acoustic wave-scattering problems.
\end{abstract}

\noindent\textbf{Keywords:}
Acoustic wave scattering; perfectly matched layer; domain embedding;
structure-preserving schemes; moving objects.

\section{Introduction}

\noindent \indent Acoustic wave scattering by moving objects arises in a broad range of
applications, including underwater acoustics and sonar target
identification, aeroacoustic diagnostics and noise mitigation, medical
ultrasound, ultrasonic nondestructive evaluation, wave-based imaging,
and inverse scattering for target localization and reconstruction
\cite{Petkov1989Scattering,intro:01,intro:02,intro:image,intro:03,intro:04}.
In such problems, the incident field interacts with a time-dependent
geometry and gives rise to reflection, diffraction, shadowing, and
Doppler effects
\cite{intro:doppler,Petkov1989Scattering,intro:03,intro:04}.
Accurate time-domain simulation is therefore essential not only for
resolving transient scattering phenomena, but also for quantifying the
cumulative influence of object motion on the scattered field.

Analytical aspects of wave propagation in time-dependent domains and
moving-geometry settings have been studied in
\cite{Lazzaroni2019,LiuGao2019,intro:03,intro:04}. From a numerical
perspective, the main difficulty is not only that the physical domain
is an unbounded exterior one and the object boundary evolves in
time, but also that the exterior truncation adopted in practical computations, the geometric
representation of the computational domain, and the time discretization of the truncated equation should be coordinated in a
manner compatible with the underlying mathematical structure of the model. This structural
viewpoint is particularly important for robust and faithful long-time simulation and
for the construction of structure-preserving algorithms.

For the implementation of the unbounded exterior domain, a variety of
nonreflecting techniques have been developed, ranging from local
absorbing boundary conditions and exact or approximate transparent
boundary conditions to perfectly matched layers (PMLs)
\cite{engquist1977absorbing,Grote&K1996,Alpert2002,FallettaMonegato2014,Berenger1994,DiazJoly2006,PML_2009,ChenWu2012,Joly2012}.
In the time domain, a PML introduces auxiliary variables and
damping mechanisms whose dissipative structure is not immediately
transparent. Thus, before introducing the moving boundary into the embedded model, we first make the  energy in the PML and dissipation explicit for a frozen object configuration. This fixed-geometry calculation only serves to identify the energy of the coupled system later. Motivated by previous energy and stability
analyses of time-domain PMLs
\cite{DiazJoly2006,ErvedozaZuazua2008,ChenWu2012,Joly2012},
we aim to derive such a formulation for general wave speed $c>0$ and use it
as the starting point of the present work.

For the geometric representation of moving objects, body-fitted
finite element, discontinuous Galerkin, and boundary element methods
remain powerful tools for wave propagation and scattering in complex
domains
\cite{ihlenburg1998finite,ihlenburg1995finite,ihlenburg1997finite,adjerid_sisc_dg,ralf_anm_2014,wang_aa_2018,bem_cmame_2020,bem_nm_2021,falletta2018bem,bae_2012}.
In moving-geometry problems, however, they typically require
remeshing, mesh deformation, or repeated boundary discretization as
the object evolves. Immersed boundary, fictitious-domain, and
diffuse-domain approaches alleviate this difficulty by replacing sharp
moving boundaries with fixed-grid formulations equipped with
penalties, Lagrange multipliers, or smooth indicators
\cite{immersed_2003,immersed_wave_scattering,error_fictitious_1995,FallettaMonegato2015,lowen-2009-commun,lowen-cms-2009,lowen-jcp-2011,lowen-jcp-2020}.
Diffuse-domain ideas have also been used in a variety of
complex-geometry and evolving-interface settings, including multiphase
flows, flow-structure interactions, and interfacial transport problems
\cite{lowen-2010-diffuse-chns,lowen-2014-diffuse-tumor,lowen-2019-diffuse-tumor,lowen-2021-diffuse-jfm,lowen-jcp-2016,HongWang2024,HongWang2025}.
Recent fixed-domain, thermodynamically consistent,  domain-embedding formulations for dissipative PDEs
\cite{Yu-Q-W-2025-1,Yu-Q-W-2025-2,Gu2026,GuJiWang2026} suggest that moving interfaces can
be incorporated through smooth, time-dependent coefficients while
still admitting compatible energy-based formulations. For acoustic scattering by moving objects, the recent domain-embedding strategy with radiation boundary conditions  provides a closely related fixed-domain reference point \cite{Gu2026}.
For arbitrarily shaped moving objects, the domain-embedding strategy provides a highly versatile engineering solution for the complex computational challenge.

For the present problem of wave scattering by a moving object, the fixed-computational-domain viewpoint has an additional computational
advantage: once the moving interface of the object and the absorbing PML are both
represented on a fixed Cartesian domain, the resulting system becomes
naturally amenable to finite-difference discretizations
and hierarchical local refinement, without geometry-dependent
remeshing or repeated boundary quadrature on evolving interfaces.
Recent structure-preserving algorithms for conservative and
dissipative PDEs suggest that
discretizations of the reformulated system can often be designed so as to inherit continuous
	{energy laws} at the discrete level
\cite{GongCaiWang2014,CaiZhangWang2017,CaiZhangWang2017SEM,MuGongCaiWang2018,Hong&W&G2023,Gu2026JCP}.

Motivated by the above, we develop a structure-preserving approximation to the
PML-domain-embedding (\PMLDE{}) system
for acoustic wave scattering by moving objects in a fixed computational
domain. The key idea is not merely to combine a PML with a domain-embedding
technique for the moving object in the model reformulation, but  to explore the gradient flow structure of the governing system of equations to show its energy dissipation property and devise efficient structure-preserving algorithms.  This provides a unified fixed-domain
framework in which absorption, geometry evolution, and
structure-preserving computation are handled coherently.

The main contributions of the paper are summarized as follows.
\begin{enumerate}
	\item We reformulate the {PML-truncated acoustic wave equation} for general
	      wave speed $c>0$ in a fixed computational domain as a gradient-flow system
	      with a quadratic energy. This reformulation reveals the
	      intrinsic energy dissipation mechanism of the truncated wave problem and
	      provides the structural basis for the subsequent reduction,
	      coupling, domain-embedding, and discretization.

	\item Based on the reformulation, we construct
	      structure-preserving time discretizations. In particular, we show that the
	      structure-preserving Crank--Nicolson (\CN{}) scheme with a semi-discrete dissipation
	      law  is algebraically equivalent to a leapfrog scheme of a reduced
	      two-field PML formulation, whose staggered energy can
	      be reconstructed systematically from {the auxiliary}
	      variables.

	\item We couple the reduced PML model {directly at the two-field level} with a diffuse domain
	      embedding of a moving sound-soft object to obtain {the \PMLDES{}}
	      in a fixed computational domain. We also introduce an additional damping mechanism within the moving object to enforce the sound-soft boundary condition. For the
	      coupled \PMLDES{} system, we establish a continuous, semi-discrete, and fully
	      discrete weighted energy law  {with an additional energy rate of change term that vanishes for a static object}, show formally by matched asymptotic
	      expansions that the diffuse model recovers the sharp-interface PML
	      model as $\vep\to 0$, and develop an adaptive
	      finite-difference realization with local refinement near the
	      diffuse interface, the PML layer, and regions of strong wave
	      activity.
\end{enumerate}
The theoretical results and asymptotic analyses presented in this work apply only to the problem with sound-soft boundary conditions. For sound-hard boundary conditions, we devise an analogous numerical scheme in the absence of normal acceleration and use it to simulate wave scattering by a rigid object undergoing uniform translational motion. The problem of object motion with sound-hard boundary conditions and nonzero normal acceleration is beyond the scope of this study.

The remainder of the paper is organized as follows.
In \S~\ref{sec:PML-wave}, we reformulate the acoustic PML model as a
gradient-flow system and show its energy dissipative property. {\S~\ref{sec:structure-discretization} is devoted to the development of the structure-preserving time discretization in the leap-frog form.} In \S~\ref{sec:pml-de-system}, we formulate the \PMLDES{} for moving objects, establish its formal consistency with the sharp-interface case under the sound-soft boundary condition, derive the corresponding weighted energy law,  and present the spatially adaptive algorithm. In \S~\ref{sec:numerical}, we present some numerical results for fixed and
moving objects to show the effectiveness of the computational platform and solution accuracy. Concluding remarks are given in \S~\ref{sec:conclusion}. Some
technical algebraic derivations are summarized in the appendix.

\section{Reformulation of the wave equation with a perfectly matched layer (PML)}\label{sec:PML-wave}

\noindent \indent We reformulate the wave scattering problem for a moving object in an infinite exterior domain as a problem posed on a finite computational domain equipped with a perfectly matched layer (PML). By introducing suitable auxiliary variables, we cast the governing equations in the computational domain as a gradient-flow system with a quadratic free energy, and explicitly identify the associated mobility matrix. We then establish the energy-dissipative property of the model in the case of a static object.

\subsection{Reformulation of the wave equation  with a PML and its
	dissipative properties}

\noindent \indent Let $D(t)\subset\mathbb{R}^2$ be a  moving object with a smooth boundary and
$\bn_\star$ denote the unit normal vector on $\partial D(t)$ pointing
from the object into the wave scattering region. For the sound-soft boundary condition at the object's boundary, the
acoustic pressure satisfies the following initial boundary value wave equation system:
\begin{equation}\label{eq:main_prob}
	\left\lbrace
	\begin{aligned}
		 & p_{tt} - c^2\Delta p = f(\mathbf{x}, t),
		 &                                              & (\bx,t)\in \mathbb{R}^2\setminus\overline{D(t)}\times(0,T], \\
		 & p(\bx,t)=0,
		 &                                              & (\bx,t)\in \partial D(t)\times(0,T],                        \\
		 & p(\bx,0)=p_0(\bx),\quad p_t(\bx,0)=p_1(\bx),
		 &                                              & \bx\in \mathbb{R}^2\setminus\overline{D(0)},
	\end{aligned}
	\right.
\end{equation}
where $p (\mathbf{x}, t)$ represents the pressure disturbance and $D(t)$
the moving object region. Here, we assume that the motion of the object
is subsonic, namely $|\mathbf{v}_{D(t)} \cdot \mathbf{n}_\star
	|\leq c$ (see \cite{Lazzaroni2019, LiuGao2019}).  For the sound-hard
case, the boundary condition on $\partial D(t)$ is replaced by
$\nabla p\cdot\bn_\star = 0$. In either case, a practical numerical
treatment on the unbounded exterior region would require a finite domain
truncation together with a nonreflecting or radiation boundary condition at the truncated domain boundary.
\begin{rem}
	In this study, we focus our analysis on moving objects subject to a sound-soft boundary condition. The corresponding wave scattering problem with a sound-hard boundary condition at the interface of the moving object is more subtle, as one must distinguish between cases with and without normal acceleration. In the former case, the analysis and algorithm developed for the sound-soft problem can be extended straightforwardly to the sound-hard setting. Accordingly, in the sound-hard numerical tests presented in this paper, the moving objects are prescribed to undergo rigid translations with constant velocity, so that the normal acceleration vanishes. The accelerated sound-hard case is therefore not addressed in the present study due to its added complication.
\end{rem}

The source term in the scattering problem is given by  $f(\mathbf{x}, t) = c^2 g(\mathbf{x})\gamma(t)$, where
$g(\mathbf{x})$ and $\gamma(t)$ are prescribed as
\begin{equation}
	g(\mathbf{x}) = \tfrac{1}{\sqrt{2\pi} \eta} {\rm exp} \left(-
	\tfrac{|\mathbf{x} - \mathbf{x}_0|^2}{2\eta}\right), \quad \chi(t)
	= \sin{(w t)} {\rm exp}(-\sigma t^2),
\end{equation}
$\mathbf{x}_0$ is the center of source, $\eta, w, \sigma$
are parameters characterizing the spatial width and temporal profile
of the source.

In this study, we replace the far-field radiation boundary condition at the finite computational domain
by a perfectly matched layer in which the wave is damped severely.
For example, we let the wave scattering region, $\Omega_{\rm phy} = (-a_1, a_1) \times
	(-a_2, a_2)$, be surrounded by a perfectly matched
layer, $\Omega_{\rm PML}$. Together they serve as the
computational domain. In the PML, the wave equation is replaced by an overdamped
equation  so that the resulting, composite PDE system can approximate the wave equation in the scattering
domain without inflicting any wave reflection at the PML interface.

Since the wave equation is conservative while the overdamped equation in the PML  is dissipative, we want to analyze the energy property of the composite equation in the computational domain
so that it will later serve as the reference for the embedded model to be presented in the next section. {In the following analysis, the object is regarded as stationary so that the effect of motion is not included until the time-dependent embedding profile is introduced in Section~\ref{sec:pml-de-system}.}  Following \cite{arxiv-pml},
the two-dimensional PML reformulation of the wave equation in
\eqref{eq:main_prob} within the domain,  $\Omega = \Omega_{\rm phys}
	\cup \Omega_{\rm PML}$, is given by
\begin{equation}\label{eq:wave_pml}
	\left\lbrace
	\begin{aligned}
		                                             & p_{tt} + a p_t + b p
		= c^2 \Delta p + \nabla \cdot \bm{\phi} + f, &                                                             & (\mathbf{x}, t)
		\in \Omega \setminus \overline{D(t)} \times (0, T],                                                                                                                                           \\
		                                             & \bm{\phi}_t + \varGamma_1 \bm{\phi}
		= c^2 \varGamma_2 \nabla p,                  &                                                             & (\mathbf{x}, t)
		\in \Omega \setminus \overline{D(t)} \times (0, T],                                                                                                                                           \\
		                                             & p(\mathbf{x}, t) = 0,                                       &                                              & (\mathbf{x}, t) \in \partial D(t)
		\times (0, T],                                                                                                                                                                                \\
		                                             & p(\mathbf{x}, t) = 0, \quad \bm{\phi} \cdot \mathbf{n} = 0, &                                              &
		(\mathbf{x} , t)  \in \partial \Omega \times (0, T],                                                                                                                                          \\
		                                             & p(\bx,0)=p_0(\bx),\quad p_t(\bx,0)=p_1(\bx), \quad
		\bm{\phi}(\mathbf{x}, 0) = 0,
		                                             &                                                             & \bx\in \mathbb{R}^2\setminus\overline{D(0)},
	\end{aligned}
	\right.
\end{equation}
where
\begin{equation*}
	\varGamma_1(\mathbf{x}) =
	\begin{pmatrix}
		\xi_1 & 0     \\
		0     & \xi_2
	\end{pmatrix}, \quad
	\varGamma_2(\mathbf{x}) =
	\begin{pmatrix}
		\xi_2  - \xi_1 & 0             \\
		0              & \xi_1 - \xi_2
	\end{pmatrix}, \quad a = {\rm tr}(\varGamma_1), \quad b = {\rm
			det}(\varGamma_1).
\end{equation*}
Here $\xi_1$ and $\xi_2$ are the PML damping profiles in the $x$- and
$y$-directions, respectively. In this paper, we set
\begin{equation}
	\left\lbrace
	\begin{aligned}
		 & \xi_i(x_i) = 0,                                    &                                & |x_i| < a_i, &           & i = 1, 2 \\
		 & \overline{\xi}_i \left( \tfrac{|x_i - a_i|}{L_i} -
		\tfrac{\sin{( \tfrac{2\pi |x_i - a_i|}{L_i}  )}}{2 \pi}  \right),
		 &                                                    & a_i \leq |x_i| \leq a_i + L_i, &              & i = 1, 2.
	\end{aligned}
	\right.
\end{equation}
where $\overline{\xi}_i$ is a user defined parameter dependent on the
discretization and the thickness of the layer, $\bm{\phi}$ is the
standard auxiliary vector
field introduced by the PML construction in \cite{arxiv-pml}. Throughout
the paper, $(\bullet, \bullet)$ denotes the $L^2$ inner product,
$\|\bullet\|$ the corresponding norm, and
$(\bullet, \bullet)_\omega = (\omega \bullet, \bullet)$ together with
$\|\bullet \|_\omega^2 = (\bullet, \bullet)_\omega$ for a symmetric positive
semidefinite matrix field $\omega$.

\subsection{Dissipative property of the PML wave system for a static object}

\noindent \indent In this section, we show the energy property of the PML formulation
in a fixed-domain, i.e.,  we temporarily
freeze the geometry and assume that $D(t)$ in \eqref{eq:wave_pml} is
time-independent and $f = 0$. {Therefore, the dissipative structure derived below should be read as a fixed-object, or frozen-geometry, result.} In \cite{baffet-2019}, this result was proved for
the special case $c = 1$. To extend the result to the general case $c
	> 0$ while maintaining
algebraic simplicity, we introduce the following auxiliary variables:
\beq
\bm{\lambda} = \tfrac{1}{c} \bm{\phi}, \quad
\bm{\chi} = c \nabla p + \tfrac{1}{c}\bm{\phi} = c\nabla p + \bm{\lambda},
\quad
q = p_t + a p.
\eeq
Then, the first and second equation in \eqref{eq:wave_pml} become
\begin{equation}\label{eq:q-eq}
	q_t = - b p + c \nabla \cdot \bm{\chi},
\end{equation}
and
\begin{equation}\label{eq:lambda-pre}
	\bm{\lambda}_t + \varGamma_1 \bm{\lambda} = c \varGamma_2 \nabla p.
\end{equation}

We next derive the evolution equation for $\bm{\chi}$. Differentiating
its definition with respect to time yields
\beq
\bm{\chi}_t
= c \nabla p_t +  \bm{\lambda}_t
= c \nabla (q - ap) + \bm{\lambda}_t
= c \nabla q - ca \nabla p + \bm{\lambda}_t.
\eeq
Substituting \eqref{eq:lambda-pre} into the above identity, using
$\nabla p = \tfrac{1}{c}(\bm{\chi} - \bm{\lambda})$, and observing
that $\varGamma_2 - a I = - 2 \varGamma_1$, we obtain
\begin{equation}
	\begin{aligned}
		\bm{\chi}_t
		 & = c \nabla q
		- ca \nabla p
		- \varGamma_1 \bm{\lambda}
		+ c \varGamma_2 \nabla p   \\
		 & = c \nabla q
		+ (\varGamma_2 - a I)\bm{\chi}
		- (\varGamma_2 - a I)\bm{\lambda}
		- \varGamma_1 \bm{\lambda} \\
		 & = c \nabla q
		- 2 \varGamma_1 \bm{\chi}
		+ \varGamma_1 \bm{\lambda}.
	\end{aligned}
\end{equation}
Similarly, the $\bm{\lambda}$-equation can be rewritten as
\begin{equation}\label{eq:phi-eq}
	\bm{\lambda}_t
	= - \widetilde{\varGamma}_1 \bm{\lambda}
	+ \varGamma_2 \bm{\chi},
	\quad
	\widetilde{\varGamma}_1 := \varGamma_1 + \varGamma_2 .
\end{equation}
Collecting \eqref{eq:q-eq}--\eqref{eq:phi-eq}, we obtain the
first-order system
\begin{equation}\label{eq:first-order-system}
	\left\lbrace
	\begin{aligned}
		p_t & = q - ap,                                \\
		q_t & = - b p + c \nabla \cdot \bm{\chi},      \\
		\bm{\chi}_t
		    & = c \nabla q
		- 2 \varGamma_1 \bm{\chi}
		+ \varGamma_1 \bm{\lambda},                    \\
		\bm{\lambda}_t
		    & = - \widetilde{\varGamma}_1 \bm{\lambda}
		+ \varGamma_2 \bm{\chi}.
	\end{aligned}
	\right.
\end{equation}

Let $\Phi = (p, q, \bm{\chi}, \bm{\lambda})^\top$. Then
\eqref{eq:first-order-system} can be written in the compact form
\begin{equation} \label{eq:wave-pml-compact}
	\Phi_t
	= \mathcal{M} \frac{\delta \mathcal{E}}{\delta \Phi}, \quad
	\mathcal{E} = \frac12\Big(
	\|q\|^2
	+ \|\bm{\chi}\|^2
	+ \|p\|_b^2
	+ \|\bm{\lambda}\|_{a^{-1}\varGamma_1}^2
	\Big).
\end{equation}
where
\begin{equation*}
	\mathcal{M} =
	\begin{pmatrix}
		-\tfrac{a}{b} & 1       & 0              & 0                                         \\
		-1            & 0       & c\nabla\!\cdot & 0                                         \\
		0             & c\nabla & -2\varGamma_1  & a I                                       \\
		0             & 0       & \varGamma_2    & -a\widetilde{\varGamma}_1\varGamma_1^{-1}
	\end{pmatrix}.
\end{equation*}
Using $\varGamma_1 - \widetilde{\varGamma}_1 = -\varGamma_2$ and
$\varGamma_1 + \widetilde{\varGamma}_1 = a I$, we further split
$\mathcal{M} = \mathcal{M}_{sym} +
	\mathcal{M}_{skw}$ into its symmetric and skew-symmetric parts. This
decomposition makes the dissipative structure of the
PML system explicit:
\begin{equation*}
	\mathcal{M}_{sym} =
	\begin{pmatrix}
		-\tfrac{a}{b} & 0 & 0                       & 0                          \\
		0             & 0 & 0                       & 0                          \\
		0             & 0 & -2\varGamma_1           & \widetilde{\varGamma}_1    \\
		0             & 0 & \widetilde{\varGamma}_1 & -a \widetilde{\varGamma}_1
		\varGamma_1^{-1}
	\end{pmatrix}
	, \quad
	\mathcal{M}_{skw} =
	\begin{pmatrix}
		0  & 1       & 0              & 0           \\
		-1 & 0       & c \nabla \cdot & 0           \\
		0  & c\nabla & 0              & \varGamma_1 \\
		0  & 0       & -\varGamma_1   & 0
	\end{pmatrix}.
\end{equation*}

\begin{thm}\label{thm:dissipative-pml-continuous}
	{Under the fixed-geometry assumption stated above,} for every sufficiently smooth solution of \eqref{eq:first-order-system}, the
	quadratic energy $\mathcal{E}$ defined in \eqref{eq:wave-pml-compact}
	obeys
	\begin{equation}\label{eq:pml-cont-energy-law}
		\frac{d\mathcal{E}}{dt} = -\mathcal{D}(t),
		\quad
		\mathcal{D}(t)
		= \|p\|_{ab}^2
		+ \|\bm{\chi}\|_{\varGamma_1 + a^{-1}\varGamma_1^2}^2
		+ \|\bm{\chi} - \bm{\lambda}\|_{a^{-1}b}^2 \ge 0.
	\end{equation}
\end{thm}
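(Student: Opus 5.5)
The plan is to differentiate $\mathcal{E}$ along solutions of \eqref{eq:first-order-system}, substitute the right-hand sides, remove the spatial derivative terms by integration by parts, and then show that what remains is a pointwise quadratic form equal to $-\mathcal{D}$. Equivalently, one can use the splitting $\mathcal{M}=\mathcal{M}_{sym}+\mathcal{M}_{skw}$: the skew part contributes nothing and the symmetric part produces $\mathcal{D}$. I will carry this out directly. We have
\begin{equation*}
\frac{d\mathcal{E}}{dt}=(q,q_t)+(\bm{\chi},\bm{\chi}_t)+(bp,p_t)+(a^{-1}\varGamma_1\bm{\lambda},\bm{\lambda}_t).
\end{equation*}
Inserting the equations gives the following cancellations.
\begin{itemize}
\item The term $-(bp,q)$ from the $q$-equation cancels the term $(bp,q)$ coming from $p_t=q-ap$, leaving $-\|p\|_{ab}^2$.
\item The two transport terms combine as $c(q,\nabla\cdot\bm{\chi})+c(\bm{\chi},\nabla q)=c\int_{\partial(\Omega\setminus \overline{D})}q\,\bm{\chi}\cdot\bn$.
\end{itemize}

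For the boundary term, under the frozen geometry we have $p=0$ on $\partial\Omega\cup\partial D$ for all $t$. Hence $p_t=0$ there, and so $q=p_t+ap=0$ on the whole boundary. The boundary integral therefore vanishes; this is the only place the boundary conditions enter.

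What remains is a pointwise algebraic form in $(\bm{\chi},\bm{\lambda})$:
\begin{equation*}
-2(\varGamma_1\bm{\chi},\bm{\chi})+(\varGamma_1\bm{\lambda},\bm{\chi})-(a^{-1}\varGamma_1\widetilde{\varGamma}_1\bm{\lambda},\bm{\lambda})+(a^{-1}\varGamma_1\varGamma_2\bm{\lambda},\bm{\chi}).
\end{equation*}
The key identities all follow because every matrix involved is diagonal:
\begin{itemize}
\item $\widetilde{\varGamma}_1=aI-\varGamma_1$ and $\varGamma_2=aI-2\varGamma_1$;
\item $\varGamma_1\widetilde{\varGamma}_1=\mathrm{diag}(\xi_1\xi_2,\xi_2\xi_1)=bI$.
\end{itemize}
From these, the cross coefficient is $\varGamma_1+a^{-1}\varGamma_1\varGamma_2=2a^{-1}\varGamma_1\widetilde{\varGamma}_1=2a^{-1}bI$, and the $\bm{\lambda}$-coefficient is $-a^{-1}bI$.

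Completing the square then gives
\begin{equation*}
-\|\bm{\chi}-\bm{\lambda}\|^2_{a^{-1}b}-\|\bm{\chi}\|^2_{2\varGamma_1-a^{-1}bI}.
\end{equation*}
It remains to check componentwise that $2\xi_1-\xi_1\xi_2/a=(2\xi_1^2+\xi_1\xi_2)/a=\xi_1+\xi_1^2/a$, and the same with $\xi_1,\xi_2$ swapped. This gives $2\varGamma_1-a^{-1}bI=\varGamma_1+a^{-1}\varGamma_1^2$, which yields \eqref{eq:pml-cont-energy-law}. Nonnegativity of $\mathcal{D}$ is immediate, since $a,b\ge0$ and $\varGamma_1\succeq0$.

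The main obstacle is the degenerate region where $a=0$, i.e.\ the physical domain, where the weights $a^{-1}\varGamma_1$ and $a^{-1}b$ are formally undefined. There I would interpret all weights by continuity: $\xi_i\ge0$ gives $\xi_i^2/a\le\xi_i$, $b/a\le\min_i \xi_i$, and $a^{-1}\varGamma_1\le I$, so all weights extend by $0$ and the energy density stays bounded. On the set $\{a=0\}$ one has $\varGamma_1=\varGamma_2=0$, so every PML term in the computation vanishes identically there. The identities above are then used only on $\{a>0\}$, and the integrands agree pointwise on both sets.
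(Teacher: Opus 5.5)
Your proof is correct and is essentially the paper's argument: substituting the equations into $\frac{d}{dt}\mathcal{E}$ is the same as pairing $\delta\mathcal{E}/\delta\Phi$ with $(\mathcal{M}_{sym}+\mathcal{M}_{skw})\,\delta\mathcal{E}/\delta\Phi$, and you close it with the same identities, namely $\varGamma_1\widetilde{\varGamma}_1=bI$ and $2\varGamma_1-a^{-1}bI=\varGamma_1+a^{-1}\varGamma_1^2$ (equivalently $\varGamma_1^2-a\varGamma_1+bI=0$). You also supply details the paper leaves implicit: the boundary term vanishes because $q=0$ on the boundary, the degenerate set $\{a=0\}$ is handled, and the formal $b^{-1}$ and $\varGamma_1^{-1}$ entries of $\mathcal{M}$ are avoided; one small wording fix is that near $\{a=0\}$ the weight $a^{-1}\varGamma_1$ is only bounded, not continuously extendable by $0$ (it equals $\mathrm{diag}(1,0)$ in the side strips), which is harmless because $\bm{\lambda}_t=0$ there.
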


\begin{proof}
	Taking the inner product of \eqref{eq:wave-pml-compact} with
	$\delta\mathcal{E}/\delta\Phi$ gives
	\[
		\frac{d\mathcal{E}}{dt}
		= \left(\frac{\delta\mathcal{E}}{\delta\Phi},
		\mathcal{M}\frac{\delta\mathcal{E}}{\delta\Phi}\right).
	\]
	Since $\mathcal{M}_{\mathrm{skw}}$ contributes no quadratic form,
	\begin{align*}
		\frac{d\mathcal{E}}{dt}
		 & = \left(\frac{\delta\mathcal{E}}{\delta\Phi},
		\mathcal{M}_{\mathrm{sym}}\frac{\delta\mathcal{E}}{\delta\Phi}\right) \\
		 & = -(abp,p) - 2(\bm{\chi},\varGamma_1\bm{\chi})
		+ 2(a^{-1}\varGamma_1\bm{\lambda},\widetilde{\varGamma}_1\bm{\chi})
		- (a^{-1}\varGamma_1\bm{\lambda},\widetilde{\varGamma}_1\bm{\lambda}) \\
		 & = -\|p\|_{ab}^2
		- \|\bm{\chi}\|_{\varGamma_1 + a^{-1}\varGamma_1^2}^2
		- \|\bm{\chi} - \bm{\lambda}\|_{a^{-1}b}^2,
	\end{align*}
	where we have used
	$\widetilde{\varGamma}_1\varGamma_1=bI$ and
	$\varGamma_1^2-a\varGamma_1+bI=0$. This proves
	\eqref{eq:pml-cont-energy-law}.
\end{proof}

For the special case, $c=1$, the corresponding dissipation property was
proved in \cite{baffet-2019}. The analysis above shows that the same
structure persists for arbitrary wave speed $c>0$.

\section{Structure-preserving discretization}\label{sec:structure-discretization}
\subsection{Structure-preserving discretization of reformulated wave equation with PML}
\label{sec:reduction}

\noindent \indent We next consider the temporal discretization of the PML wave
system as a gradient flow. Our goal is to preserve, at the semi-discrete level, the same
	{fixed-geometry} dissipative mechanism identified in
\eqref{eq:wave-pml-compact}. Let $T>0$ be a final time. We partition
the interval $[0,T]$ into $N$ subintervals with time-step size
$\tau=T/N$ and introduce the temporal difference operators:
\begin{equation}\label{eq:time-ops}
	\begin{aligned}
		D_t^+\phi^n  & = \frac{\phi^{n+1}-\phi^n}{\tau},
		             & D_t^-\phi^n                                     & = \frac{\phi^n-\phi^{n-1}}{\tau},
		             & D_{2t}\phi^n                                    & = \frac{\phi^{n+1}-\phi^{n-1}}{2\tau},     \\
		D_t^2\phi^n  & = \frac{\phi^{n+1}-2\phi^n+\phi^{n-1}}{\tau^2},
		             & A_t^+\phi^n                                     & = \frac{\phi^{n+1}+\phi^n}{2},
		             & A_t^-\phi^n                                     & = \frac{\phi^n+\phi^{n-1}}{2},             \\
		A_{2t}\phi^n & = \frac{\phi^{n+1}+\phi^{n-1}}{2},
		             & A_t^2\phi^n                                     & = \frac{\phi^{n+1}+2\phi^n+\phi^{n-1}}{4}.
	\end{aligned}
\end{equation}

A natural structure-preserving discretization of
\eqref{eq:wave-pml-compact} is the Crank--Nicolson scheme
\begin{equation*}
	D_t^+ \Phi^n = \mathcal{M} \frac{\delta \mathcal{E}}{\delta
		\Phi}[A_t^+ \Phi^n].
\end{equation*}
Its componentwise form is
\begin{equation}\label{eq:wave-pml-semi-discrete-component}
	\left\lbrace
	\begin{aligned}
		 & D_t^+ p^n = A_t^+ q^n - a  A_t^+ p^n,                        \\
		 & D_t^+ q^n = - b A_t^+ p^n + c \nabla \cdot A_t^+\bm{\chi}^n, \\
		 & D_t^+ \bm{\chi}^n = c \nabla A_t^+ q^n - 2 \varGamma_1 A_t^+
		\bm{\chi}^n + \varGamma_1 A_t^+ \bm{\lambda}^n,                 \\
		 & D_t^+ \bm{\lambda}^n = -\widetilde{\varGamma}_1 A_t^+
		\bm{\lambda}^n + \varGamma_2 A_t^+ \bm{\chi}^n.
	\end{aligned}
	\right.
\end{equation}

\begin{thm}\label{thm:CN-energy}
	The Crank--Nicolson discretization,
	\eqref{eq:wave-pml-semi-discrete-component},
	inherits the dissipative property of the continuous system and satisfies
	\begin{equation}\label{eq:CN-energy-law}
		D_t^+\mathcal{E}^n = -\mathcal{D}^{n+\frac12},
	\end{equation}
	with
	\begin{equation*}
		\begin{aligned}
			\mathcal{E}^n
			 & = \frac12\Bigl(\|q^n\|^2 + \|\bm{\chi}^n\|^2
			+ \|p^n\|_b^2 + \|\bm{\lambda}^n\|_{a^{-1}\varGamma_1}^2\Bigr), \\
			\mathcal{D}^{n+\frac12}
			 & = \|A_t^+p^n\|_{ab}^2
			+ \|A_t^+\bm{\chi}^n\|_{\varGamma_1 + a^{-1}\varGamma_1^2}^2
			+ \|A_t^+(\bm{\chi}^n - \bm{\lambda}^n)\|_{a^{-1}b}^2.
		\end{aligned}
	\end{equation*}
\end{thm}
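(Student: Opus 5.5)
The plan is to mimic the proof of Theorem~\ref{thm:dissipative-pml-continuous} at the discrete level. Since $\mathcal{E}$ is quadratic with a time-independent (frozen-geometry) weight, the polarization identity gives, for each component,
\[
\frac{1}{2\tau}\bigl(\|u^{n+1}\|_\omega^2-\|u^n\|_\omega^2\bigr)=(D_t^+u^n,\,A_t^+u^n)_\omega .
\]
Consequently,
\[
D_t^+\mathcal{E}^n=\Bigl(D_t^+\Phi^n,\ \frac{\delta\mathcal{E}}{\delta\Phi}[A_t^+\Phi^n]\Bigr),
\]
where the variational derivative evaluated at $A_t^+\Phi^n$ is
\[
\mathbf{w}:=\bigl(bA_t^+p^n,\ A_t^+q^n,\ A_t^+\bm{\chi}^n,\ a^{-1}\varGamma_1A_t^+\bm{\lambda}^n\bigr).
\]
This is exactly the discrete analogue of the chain rule. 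It holds precisely because the midpoint evaluation makes $\delta\mathcal{E}/\delta\Phi$ linear in $A_t^+\Phi^n$.

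Next I substitute the scheme $D_t^+\Phi^n=\mathcal{M}\mathbf{w}$ and obtain $D_t^+\mathcal{E}^n=(\mathbf{w},\mathcal{M}\mathbf{w})$. Before splitting $\mathcal{M}$, I will check that \eqref{eq:wave-pml-semi-discrete-component} really is $\mathcal{M}\mathbf{w}$ row by row. For example, the first row gives $-\tfrac{a}{b}\,bA_t^+p^n+A_t^+q^n$, and the last row gives $\varGamma_2A_t^+\bm{\chi}^n-a\widetilde{\varGamma}_1\varGamma_1^{-1}a^{-1}\varGamma_1A_t^+\bm{\lambda}^n$.

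Then I split $\mathcal{M}=\mathcal{M}_{sym}+\mathcal{M}_{skw}$. The skew part contributes nothing to the quadratic form. For the entries $1$, $-1$, $\varGamma_1$, $-\varGamma_1$ this is immediate, since the $\varGamma_i$ are diagonal and hence commute and are symmetric. For the pair $c\nabla\cdot$, $c\nabla$ it requires integration by parts:
\[
(A_t^+q^n,\,c\nabla\cdot A_t^+\bm{\chi}^n)+(A_t^+\bm{\chi}^n,\,c\nabla A_t^+q^n)=c\int_{\partial}A_t^+q^n\,A_t^+\bm{\chi}^n\cdot\bn .
\]

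Showing that this boundary term vanishes is the step I expect to need care. I would argue that $p=0$ on $\partial\Omega$ and on $\partial D$ at every time level. Since $\xi_i$ does not vanish there, I will use the first equation of the scheme to conclude that $A_t^+q^n=D_t^+p^n+aA_t^+p^n=0$ on these boundaries, assuming $p^n$ vanishes there for all $n$. This kills the boundary integral. If this route fails, I would fall back on the condition $\bm{\phi}\cdot\bn=0$ together with the $\bm{\chi}$, $\bm{\lambda}$ equations, but the $q$-route seems cleanest.

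Finally, the remaining quadratic form $(\mathbf{w},\mathcal{M}_{sym}\mathbf{w})$ is exactly the expression computed in the proof of Theorem~\ref{thm:dissipative-pml-continuous}, with $(p,\bm{\chi},\bm{\lambda})$ replaced by their averages. The same algebra applies verbatim: the identities $\widetilde{\varGamma}_1\varGamma_1=bI$ and $\varGamma_1^2-a\varGamma_1+bI=0$ are pointwise and hold regardless of the time discretization. This yields $-\mathcal{D}^{n+\frac12}$ and completes \eqref{eq:CN-energy-law}.
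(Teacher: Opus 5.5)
Your proposal is correct and is essentially the paper's proof. The midpoint evaluation gives the discrete chain rule $D_t^+\mathcal{E}^n=\bigl(D_t^+\Phi^n,\frac{\delta\mathcal{E}}{\delta\Phi}[A_t^+\Phi^n]\bigr)$; the skew part of $\mathcal{M}$ then drops out, and the symmetric part reproduces the algebra of Theorem~\ref{thm:dissipative-pml-continuous} with averaged fields. Your explicit treatment of the boundary term, via $A_t^+q^n=D_t^+p^n+aA_t^+p^n=0$ on $\partial\Omega\cup\partial D$, fills in a step the paper leaves implicit; it needs only $p^n=0$ there (the PML profiles in fact vanish on $\partial D$, so your remark that $\xi_i$ is nonzero there is inaccurate but harmless).
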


\begin{proof}
	Multiplying \eqref{eq:wave-pml-semi-discrete-component} by
	$\delta\mathcal{E}/\delta\Phi[A_t^+\Phi^n]$ gives
	$D_t^+\mathcal{E}^n =
		\bigl(\frac{\delta\mathcal{E}}{\delta\Phi}[A_t^+\Phi^n],
		D_t^+\Phi^n\bigr)$.
	The rest is the discrete analogue of the proof of
	Theorem~\ref{thm:dissipative-pml-continuous}, where one replaces
	$(p,q,\bm{\chi},\bm{\lambda})$ with their midpoint values
	$(A_t^+p^n,A_t^+q^n,A_t^+\bm{\chi}^n,A_t^+\bm{\lambda}^n)$.
\end{proof}

The midpoint formulation ,\eqref{eq:wave-pml-semi-discrete-component},
contains four coupled fields. For implementation, however, it is often
preferable to work with a reduced system involving fewer unknowns. This
leads to the following two-field leap-frog scheme associated with
\eqref{eq:wave-pml-semi-discrete-component}:
\begin{equation}\label{eq:wave-pml-semi-discrete-leap-frog}
	\left\lbrace
	\begin{aligned}
		 & D_t^2 p^n + a D_{2t} p^n + b A_t^2 p^n = c^2  \Delta  A_t^2
		p^n + c \nabla \cdot A_t^2 \bm{\lambda}^n,                     \\
		 & D_t^+ \bm{\lambda}^n + \varGamma_1 A_t^+ \bm{\lambda}^n = c
		\varGamma_2 \nabla A_t^+ p^n.
	\end{aligned}
	\right.
\end{equation}

The reduced leap-frog formulation is convenient in practice, but its
energy stability is less transparent. The next
two lemmas show that it is algebraically equivalent to the midpoint
\CN{} scheme. This equivalence clarifies the origin of the staggered
energy law and will later simplify the derivation for the object embedded formulation.

\begin{lem}
	\label{lem:constraint}
	Let $\bm{r}^n := \bm{\chi}^n - c\nabla p^n - \bm{\lambda}^n$. Then the
	midpoint scheme, \eqref{eq:wave-pml-semi-discrete-component}, yields
	\begin{equation}
		\label{eq:r-update}
		D_t^+ \bm{r}^n = -a A_t^+ \bm{r}^{n}.
	\end{equation}
	Hence, if $\bm{r}^0=0$, then $\bm{r}^n=0$ for all $n \ge 0$.
\end{lem}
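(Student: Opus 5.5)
Since $D_t^+$ and $A_t^+$ are linear and commute with the (time-independent) spatial operator $\nabla$, the plan is to compute $D_t^+\bm{r}^n = D_t^+\bm{\chi}^n - c\nabla D_t^+p^n - D_t^+\bm{\lambda}^n$ and substitute the three relevant lines of \eqref{eq:wave-pml-semi-discrete-component}. The first line gives $c\nabla D_t^+p^n = c\nabla A_t^+q^n - c\,a\nabla A_t^+p^n$. Here I use that, under the fixed-geometry assumption, $a$ is a function of $\bx$. This is the one subtle point: $\nabla(a A_t^+p^n)= a\nabla A_t^+p^n + A_t^+p^n\nabla a$. I would follow the continuous derivation preceding \eqref{eq:first-order-system}, which wrote $c\nabla p_t = c\nabla q - ca\nabla p$. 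In effect it treats $a$ as locally constant, or interprets the gradient term in the same way, since the PML profiles depend on each coordinate separately. I would state that the identity holds in the same sense as the continuous reduction.

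Substituting, the $c\nabla A_t^+q^n$ terms cancel between the $\bm{\chi}$-equation and the $p$-equation. What remains is
\[
D_t^+\bm{r}^n = -2\varGamma_1 A_t^+\bm{\chi}^n + \varGamma_1 A_t^+\bm{\lambda}^n + c\,a\nabla A_t^+p^n + \widetilde{\varGamma}_1 A_t^+\bm{\lambda}^n - \varGamma_2 A_t^+\bm{\chi}^n .
\]
Next I use the identities $\varGamma_1+\widetilde{\varGamma}_1 = aI$ and $2\varGamma_1+\varGamma_2 = aI$, the latter being equivalent to $\varGamma_2 - aI = -2\varGamma_1$. Then the right-hand side collapses to $-a A_t^+\bm{\chi}^n + aA_t^+\bm{\lambda}^n + c\,a\nabla A_t^+p^n = -a A_t^+\bm{r}^n$, which is \eqref{eq:r-update}. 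This bookkeeping of the matrix identities is the main place to be careful; it mirrors the continuous computation exactly.

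For the consequence, rewrite \eqref{eq:r-update} pointwise in $\bx$ as
\[
\bigl(1+\tfrac{\tau a}{2}\bigr)\bm{r}^{n+1} = \bigl(1-\tfrac{\tau a}{2}\bigr)\bm{r}^n .
\]
Since $a\ge 0$, the factor $1+\tau a/2$ is positive, so $\bm{r}^{n+1}$ is uniquely determined by $\bm{r}^n$ and vanishes whenever $\bm{r}^n$ does. Induction from $\bm{r}^0=0$ then yields $\bm{r}^n=0$ for all $n\ge0$.
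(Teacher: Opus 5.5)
Your computation follows the paper's appendix proof step for step. You substitute the $p$-, $\bm{\chi}$- and $\bm{\lambda}$-updates, cancel $c\nabla A_t^+q^n$, collapse with $2\varGamma_1+\varGamma_2=\varGamma_1+\widetilde{\varGamma}_1=aI$, and then induct. Your recursion $(1+\tfrac{\tau a}{2})\bm{r}^{n+1}=(1-\tfrac{\tau a}{2})\bm{r}^n$ with $a\ge 0$ is a clean way to write the induction the paper calls straightforward.

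The problem is the step you flagged yourself, and your justification for it does not work. Separable profiles give $\nabla a=(\xi_1'(x_1),\xi_2'(x_2))^\top$, and $\xi_i'$ cannot vanish identically across the layer, because $\xi_i$ rises from $0$ to $\overline{\xi}_i$ there. Done exactly, the first update gives $c\nabla D_t^+p^n=c\nabla A_t^+q^n-ca\nabla A_t^+p^n-c\,(\nabla a)\,A_t^+p^n$. The rest of your algebra then yields
\[
D_t^+\bm{r}^n=-aA_t^+\bm{r}^n+c\,(\nabla a)\,A_t^+p^n ,
\]
so $\bm{r}^0=0$ does not propagate inside the PML. Appealing to ``the same sense as the continuous reduction'' does not rescue this. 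The continuous derivation of the $\bm{\chi}$-equation drops the same term $-cp\nabla a$ when it writes $c\nabla(q-ap)=c\nabla q-ca\nabla p$.

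The paper's proof makes the identical silent commutation, replacing $-c\nabla A_t^+(q^n-ap^n)$ by $-c\nabla A_t^+q^n+ca\nabla A_t^+p^n$. So you have reproduced its argument faithfully, gap included. A correct version needs one of two repairs.

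\emph{First option:} assume $\nabla a=0$. This holds for constant damping but not for the graded profile used here. Since the $\bm{r}$-recursion above is pointwise in $\bx$, you can still conclude $\bm{r}^n=0$ wherever $\nabla a$ vanishes, e.g.\ in the physical region.

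\emph{Second option:} add $-c\,(\nabla a)\,A_t^+p^n$ to the right-hand side of the $\bm{\chi}$-update. This makes $D_t^+\bm{r}^n=-aA_t^+\bm{r}^n$ exact. It also leaves the reduction to the two-field leap-frog scheme intact, since the $\bm{\chi}$-update enters that reduction only through the constraint $\bm{r}^n=0$. However, it introduces the sign-indefinite term $-c\bigl((\nabla a)A_t^+p^n,A_t^+\bm{\chi}^n\bigr)$ into the Crank--Nicolson energy identity, so the clean dissipation law no longer follows.
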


\begin{lem}
	\label{lem:reduction}
	Assume $\bm{r}^0=0$, then $\bm{r}^n\equiv 0$ by
	Lemma~\ref{lem:constraint}. The midpoint scheme,
	\eqref{eq:wave-pml-semi-discrete-component}, is algebraically
	equivalent to the two-field scheme,
	\eqref{eq:wave-pml-semi-discrete-leap-frog}, for $(p^n,\bm{\lambda}^n)$.
	Moreover, once $(p^n,p^{n+1},\bm{\lambda}^n,\bm{\lambda}^{n+1})$ are known,
	the eliminated variables are recovered from
	\begin{equation}\label{eq:reconstruct_qn_chin}
		\begin{aligned}
			q^{n+1}     & = q^{n+\frac12} + \tfrac{\tau}{2}F^{n+\frac12},
			            & \qquad q^n                                      & = q^{n+\frac12} - \tfrac{\tau}{2}F^{n+\frac12}, \\
			\bm{\chi}^n & = c\nabla p^n + \bm{\lambda}^n,
			            & \qquad \bm{\chi}^{n+1}                          & = c\nabla p^{n+1} + \bm{\lambda}^{n+1},
		\end{aligned}
	\end{equation}
	where
	\begin{equation}\label{eq:den-qF}
		q^{n+\frac12}=D_t^+p^n+aA_t^+p^n,
		\qquad
		F^{n+\frac12}=-(b-c^2\Delta)A_t^+p^n+c\nabla\cdot A_t^+\bm{\lambda}^n.
	\end{equation}
\end{lem}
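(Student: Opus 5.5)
We prove both directions of the equivalence. The main tools are the constraint $\bm{\chi}^n = c\nabla p^n + \bm{\lambda}^n$ from Lemma~\ref{lem:constraint}, and the elementary identities $D_t^-A_t^+ = A_t^-D_t^+$, $D_t^-D_t^+ = D_t^2$, $A_t^-A_t^+=A_t^2$ and $A_t^-D_t^+ = D_{2t}$ acting on sequences.

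\emph{Forward direction.} Assume \eqref{eq:wave-pml-semi-discrete-component} holds and $\bm{r}^n\equiv 0$. Averaging the constraint gives $A_t^+\bm{\chi}^n = c\nabla A_t^+p^n + A_t^+\bm{\lambda}^n$.

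Inserting this into the fourth equation of \eqref{eq:wave-pml-semi-discrete-component} gives
\[
D_t^+\bm{\lambda}^n = -(\widetilde{\varGamma}_1-\varGamma_2)A_t^+\bm{\lambda}^n + c\varGamma_2\nabla A_t^+p^n .
\]
Since $\widetilde{\varGamma}_1-\varGamma_2=\varGamma_1$, this is exactly the second equation of \eqref{eq:wave-pml-semi-discrete-leap-frog}.

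The first equation of \eqref{eq:wave-pml-semi-discrete-component} says $A_t^+q^n = D_t^+p^n + aA_t^+p^n = q^{n+\frac12}$. Inserting the averaged constraint into the second equation gives $D_t^+q^n = F^{n+\frac12}$, with $F^{n+\frac12}$ as in \eqref{eq:den-qF}. From $A_t^+q^n=q^{n+\frac12}$ and $D_t^+q^n=F^{n+\frac12}$ we solve directly for $q^n$ and $q^{n+1}$, which is \eqref{eq:reconstruct_qn_chin}.

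To eliminate $q$, apply $D_t^-$ to the identity $A_t^+q^n = D_t^+p^n + aA_t^+p^n$. Using the commuting identities, the left side becomes $A_t^-D_t^+q^n = A_t^-F^{n+\frac12}$, and the right side becomes $D_t^2p^n + aD_{2t}p^n$. Moreover $A_t^-F^{n+\frac12} = -(b-c^2\Delta)A_t^2p^n + c\nabla\cdot A_t^2\bm{\lambda}^n$, because $a,b$ are time-independent. This is the first equation of \eqref{eq:wave-pml-semi-discrete-leap-frog}.

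\emph{Converse direction.} Start from $(p^n,\bm{\lambda}^n)$ solving \eqref{eq:wave-pml-semi-discrete-leap-frog}. Define $\bm{\chi}^n$ by the constraint and $q$ through $q^{n+\frac12}$ and $F^{n+\frac12}$. The second, third and fourth midpoint equations then reverse the computations above. The delicate point, and the expected main obstacle, is \emph{consistency of the $q$ reconstruction}: the value $q^{n}$ obtained from step $[n,n+1]$ must coincide with the value $q^{n}$ obtained from step $[n-1,n]$. This amounts to
\[
q^{n+\frac12}-\tfrac{\tau}{2}F^{n+\frac12} = q^{n-\frac12}+\tfrac{\tau}{2}F^{n-\frac12},
\qquad\text{i.e.}\qquad D_t^-q^{n+\frac12}=A_t^-F^{n+\frac12},
\]
which is exactly the leap-frog $p$-equation derived above. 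Initializing $q^0 = p_1 + ap_0$ then fixes the first step consistently.

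The \emph{third equation} requires a short computation. Take $D_t^+$ of the constraint and substitute the $\bm{\lambda}$-update. Use $D_t^+p^n = A_t^+q^n - aA_t^+p^n$ and rewrite $c\nabla A_t^+p^n = A_t^+(\bm{\chi}^n-\bm{\lambda}^n)$. Together with $\varGamma_2-aI=-2\varGamma_1$, this reproduces $cA\nabla A_t^+q^n - 2\varGamma_1A_t^+\bm{\chi}^n + \varGamma_1A_t^+\bm{\lambda}^n$, mirroring the continuous derivation of the $\bm{\chi}$-equation. The fourth equation follows from the $\bm{\lambda}$-update in the same way as in the forward direction.

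Here $\bm{\chi}$ is defined by the constraint, so $\bm{r}^n\equiv0$ holds trivially.
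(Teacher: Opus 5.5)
Your proof is correct to the same standard as the paper and follows its route. You eliminate $q$ using $D_t^-A_t^+=A_t^-D_t^+$, $D_t^-D_t^+=D_t^2$ and $A_t^-A_t^+=A_t^2$, substitute $\bm{\chi}^n=c\nabla p^n+\bm{\lambda}^n$, and reverse these steps for the converse. Your check that the two reconstructions of $q^n$ agree exactly when $D_t^-q^{n+\frac12}=A_t^-F^{n+\frac12}$, which is the leap-frog $p$-equation, fills a step the paper skips: its converse simply assumes \eqref{eq:wave-pml-semi-discrete-leap-frog} and \eqref{eq:reconstruct_qn_chin} together.

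One caveat applies equally to your argument and to the paper's (the proof of Lemma~\ref{lem:constraint} and the derivation of \eqref{eq:first-order-system}). Your $\bm{\chi}$-check uses $c\nabla(aA_t^+p^n)=ca\nabla A_t^+p^n$, which drops the product-rule term $cA_t^+p^n\,\nabla a$. That term is nonzero inside the PML, where $a=\xi_1+\xi_2$ varies, so the equivalence as stated is exact only for spatially constant $a$.
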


The reconstruction formulas above immediately yield a discrete energy
law for the leap-frog scheme.

\begin{thm}\label{thm:pml-semi-second-ene}
	The leap-frog
	scheme, \eqref{eq:wave-pml-semi-discrete-leap-frog}, satisfies
	\begin{equation}\label{eq:reduced-energy-law}
		\widehat{\mathcal{E}}^{n+\frac12}
		- \widehat{\mathcal{E}}^{n-\frac12}
		= -\widetilde{\mathcal{D}}^n,
	\end{equation}
	with
	\begin{equation*}
		\begin{aligned}
			\widehat{\mathcal{E}}^{n+\frac12}
			 & = \frac12\Bigl(
			\|D_t^+p^n + a A_t^+ p^n\|^2
			+ \|A_t^+(cp^n + \bm{\lambda}^n)\|^2
			+ \|A_t^+p^n\|_b^2
			+ \|A_t^+\bm{\lambda}^n\|_{a^{-1}\varGamma_1}^2
			\Bigr),                  \\
			\widetilde{\mathcal{D}}^n
			 & = \|A_t^2p^n\|_{ab}^2
			+ \|A_t^2(cp^n + \bm{\lambda}^n)\|_{\varGamma_1 + a^{-1}\varGamma_1^2}^2
			+ \|cA_t^2\nabla p^n\|_{a^{-1}b}^2.
		\end{aligned}
	\end{equation*}
\end{thm}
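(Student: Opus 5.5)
The plan is to read the staggered energy $\widehat{\mathcal{E}}^{n+\frac12}$ as the quadratic energy $\mathcal{E}$ of \eqref{eq:wave-pml-compact} evaluated at the midpoint state $A_t^+\Phi^n$ of the Crank--Nicolson scheme \eqref{eq:wave-pml-semi-discrete-component}. The energy law is then the discrete-gradient argument of Theorem~\ref{thm:CN-energy}, applied to the sequence of midpoints instead of the nodal values. I interpret the term $cp^n+\bm{\lambda}^n$ in the statement as $c\nabla p^n+\bm{\lambda}^n$, i.e.\ $\bm{\chi}^n$, since that is the only reading that makes it a vector field.

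\emph{Step 1: identify the staggered energy.} By Lemma~\ref{lem:reduction} and Lemma~\ref{lem:constraint}, a solution $(p^n,\bm{\lambda}^n)$ of \eqref{eq:wave-pml-semi-discrete-leap-frog} lifts to a solution $\Phi^n=(p^n,q^n,\bm{\chi}^n,\bm{\lambda}^n)$ of the midpoint scheme with $\bm{r}^n\equiv0$. Hence $\bm{\chi}^n=c\nabla p^n+\bm{\lambda}^n$, which gives $A_t^+\bm{\chi}^n=A_t^+(c\nabla p^n+\bm{\lambda}^n)$. The first equation of \eqref{eq:wave-pml-semi-discrete-component} gives $A_t^+q^n=D_t^+p^n+aA_t^+p^n=q^{n+\frac12}$, consistent with \eqref{eq:reconstruct_qn_chin}. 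Therefore
\[
\widehat{\mathcal{E}}^{n+\frac12}=\mathcal{E}[A_t^+\Phi^n],
\]
and likewise $\widehat{\mathcal{E}}^{n-\frac12}=\mathcal{E}[A_t^+\Phi^{n-1}]$.

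\emph{Step 2: an evolution law for the midpoints.} Add the Crank--Nicolson relations at levels $n-1$ and $n$ and use linearity of $\mathcal{M}$ and of $\delta\mathcal{E}/\delta\Phi$. This gives
\[
A_t^+\Phi^n-A_t^+\Phi^{n-1}=\tfrac{\tau}{2}\bigl(D_t^+\Phi^n+D_t^+\Phi^{n-1}\bigr)=\tau\,\mathcal{M}\frac{\delta\mathcal{E}}{\delta\Phi}[A_t^2\Phi^n],
\]
where we use $\tfrac12(A_t^+\Phi^n+A_t^+\Phi^{n-1})=A_t^2\Phi^n$.

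\emph{Step 3: the energy identity.} Since $\mathcal{E}$ is quadratic, $\mathcal{E}[X]-\mathcal{E}[Y]=\bigl(\frac{\delta\mathcal{E}}{\delta\Phi}[\tfrac{X+Y}{2}],X-Y\bigr)$. Take $X=A_t^+\Phi^n$ and $Y=A_t^+\Phi^{n-1}$, so that $\tfrac{X+Y}{2}=A_t^2\Phi^n$. The skew part $\mathcal{M}_{skw}$ drops out, and the computation in the proof of Theorem~\ref{thm:dissipative-pml-continuous} applies verbatim with $\Phi$ replaced by $A_t^2\Phi^n$. This yields the dissipation
\[
\|A_t^2p^n\|_{ab}^2+\|A_t^2\bm{\chi}^n\|^2_{\varGamma_1+a^{-1}\varGamma_1^2}+\|A_t^2(\bm{\chi}^n-\bm{\lambda}^n)\|^2_{a^{-1}b}.
\]
Finally, $\bm{r}\equiv0$ gives $A_t^2\bm{\chi}^n=A_t^2(c\nabla p^n+\bm{\lambda}^n)$ and $A_t^2(\bm{\chi}^n-\bm{\lambda}^n)=cA_t^2\nabla p^n$, which is exactly $\widetilde{\mathcal{D}}^n$.

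The argument as written produces a factor $\tau$, namely $\widehat{\mathcal{E}}^{n+\frac12}-\widehat{\mathcal{E}}^{n-\frac12}=-\tau\widetilde{\mathcal{D}}^n$. I expect the statement intends this, or equivalently the divided-difference form $D_t^-\widehat{\mathcal{E}}^{n+\frac12}=-\widetilde{\mathcal{D}}^n$. The main point needing care is Step 1: the leap-frog data must actually come from a midpoint solution with $\bm{r}^0=0$, so that both $q$ and $\bm{\chi}$ can be reconstructed consistently. I would handle this by defining $\bm{\chi}^n$ and $q^n$ through \eqref{eq:reconstruct_qn_chin} and invoking the converse direction of Lemma~\ref{lem:reduction}. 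One must also check that the $q^n$ reconstructed from the interval $[t_{n-1},t_n]$ coincides with the one reconstructed from $[t_n,t_{n+1}]$. This consistency is exactly what the first leap-frog equation expresses, since it is the $q$-equation of the midpoint scheme written at the two adjacent half steps.
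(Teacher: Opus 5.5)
Your route is the paper's. The paper applies $A_t^-$ to the Crank--Nicolson relations, which is exactly your Step~2. It then treats the result as a Crank--Nicolson scheme for the shifted sequence $A_t^+\Phi^{n-1}$, applies Theorem~\ref{thm:CN-energy} to it (your Step~3), and substitutes the reconstruction of Lemma~\ref{lem:reduction}. Both of your corrections to the statement are right. The shifted law reads $D_t^+\widehat{\mathcal{E}}^{n-\frac12}=-\widetilde{\mathcal{D}}^n$, so the displayed identity lacks the factor $\tau$. Also, $cp^n$ must be $c\nabla p^n$.

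However, the lifting step you flagged does fail in general, and it fails in the paper's argument too. Your $q^n$-consistency check is fine; the problem is the $\bm{\chi}$-equation. For the reconstructed $\bm{\chi}^n:=c\nabla p^n+\bm{\lambda}^n$ to satisfy the third Crank--Nicolson equation, you need $c\nabla D_t^+p^n=c\nabla A_t^+q^n-ca\nabla A_t^+p^n$. But $a=\xi_1(x)+\xi_2(y)$ varies inside the layer, so $c\nabla(aA_t^+p^n)=ca\nabla A_t^+p^n+c(\nabla a)A_t^+p^n$.

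The proof of Lemma~\ref{lem:constraint} drops this extra term, and so does the derivation of \eqref{eq:first-order-system}. Done correctly, $D_t^+\bm{r}^n=-aA_t^+\bm{r}^n+c(\nabla a)A_t^+p^n$. So a leap-frog solution is not an exact midpoint solution, and $\bm{r}^n$ is not preserved. If you compute the energy difference directly from \eqref{eq:wave-pml-semi-discrete-leap-frog}, using the same algebra ($\varGamma_1\widetilde{\varGamma}_1=bI$ and $\varGamma_1^2-a\varGamma_1+bI=0$), you get
\[
\widehat{\mathcal{E}}^{n+\frac12}-\widehat{\mathcal{E}}^{n-\frac12}
=-\tau\widetilde{\mathcal{D}}^n-\tau c\,\bigl((\nabla a)A_t^2p^n,\,A_t^2(c\nabla p^n+\bm{\lambda}^n)\bigr).
\]
Hence your proof, like the paper's, yields the stated law only when $\nabla a=0$, i.e.\ for spatially constant damping. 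With the variable profiles used here, you must either carry this indefinite extra term along or modify the energy.
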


\begin{proof}
	Observe that \eqref{eq:wave-pml-semi-discrete-component} implies
	\begin{equation}\label{eq:wave-pml-semi-discrete-shifted}
		\left\lbrace
		\begin{aligned}
			 & D_t^+ A_t^- p^n = A_t^+ A_t^- q^n - a  A_t^+ A_t^- p^n,           \\
			 & D_t^+ A_t^- q^n = - b A_t^+ A_t^- p^n + c \nabla \cdot A_t^+
			A_t^-\bm{\chi}^n,                                                    \\
			 & D_t^+ A_t^- \bm{\chi}^n = c \nabla A_t^+ A_t^- q^n - 2
			\varGamma_1 A_t^+
			A_t^- \bm{\chi}^n + \varGamma_1 A_t^+ A_t^- \bm{\lambda}^n,          \\
			 & D_t^+ A_t^- \bm{\lambda}^n = -\widetilde{\varGamma}_1 A_t^+ A_t^-
			\bm{\lambda}^n + \varGamma_2 A_t^+ A_t^-\bm{\chi}^n.
		\end{aligned}
		\right.
	\end{equation}
	Consequently, a half-step-shifted version of
	Theorem~\ref{thm:CN-energy} gives
	\begin{equation*}
		\widehat{\mathcal{E}}^{n+\frac{1}{2}} -
		\widehat{\mathcal{E}}^{n-\frac{1}{2}} =
		-\widetilde{\mathcal{D}}^{n},
	\end{equation*}
	where
	\begin{equation*}
		\begin{aligned}
			 & \widehat{\mathcal{E}}^{n+\frac{1}{2}} = \frac{1}{2}
			\left(\|A_t^+ q^n\|^2 + \|A_t^+
			\bm{\chi}^n\|^2
			+ \|A_t^+ p^n\|_b^2 + \|A_t^+
			\bm{\lambda}^n\|_{a^{-1}\varGamma_1}^2\right),          \\
			 & \widetilde{\mathcal{D}}^{n} = \|A_t^2 p^n\|^2_{ab} +
			\|A_t^2\bm{\chi}^n\|^2_{\varGamma_1 + a^{-1} \varGamma_1^2} +
			\|A_t^2(\bm{\chi}^n - \bm{\lambda}^n)\|_{a^{-1} b}^2.
		\end{aligned}
	\end{equation*}
	Substituting the reconstruction formulas from
	Lemma~\ref{lem:reduction} for $q$ and $\bm{\chi}$ yields the desired result.
\end{proof}
The energy law in Theorem~\ref{thm:pml-semi-second-ene} can also be derived
directly from \eqref{eq:wave-pml-semi-discrete-leap-frog}; here we use the
reconstruction argument because it makes the underlying dissipative
structure of the system more transparent and {motivates the corresponding weighted energy estimate for}
the embedded system discussed next.

\section{The \PMLDE{} system and its structure-preserving algorithm}\label{sec:pml-de-system}

\noindent \indent We now combine the reduced PML formulation of the wave scattering problem with a domain-embedding
description of a moving sound-soft object. The computational  advantage is
clear: the moving interface problem is replaced by time-dependent coefficients of a modified PDE system
in a fixed computational domain. {In the embedded formulation below, the unknowns are \((p,\bm{\lambda})\). }
The analytical issue is how the energy dissipation rate
in the fixed-geometry  for the PML formulated problem  in
Section~\ref{sec:PML-wave} is altered after the embedding step. {When the object moves, it inflict an additional, indefinite energy rate of change dictated by the interface motion,}
provided that the object stays away from the PML collar.
The analyses in this section apply exclusively to the embedded system with a sound-soft boundary condition. For comparison, in the subsequent section we present numerical results for the embedded model with sound-hard boundary conditions and compare them with those obtained under the sound-soft boundary condition. We make no analytical claims regarding the structure-preserving properties of the numerical scheme used for the sound-hard boundary-condition model.

\subsection{The \PMLDE{} formulation}

\noindent \indent Let $\Omega_{\rm phy}\subset\R^2$ be the scattering
region and let $\Omega_{\rm PML}$ be a surrounding PML collar. The
total domain is the fixed domain
\[
	\Omega = \Omega_{\rm phy}\cup\Omega_{\rm PML}.
\]
Inside $\Omega_{\rm phy}$, let $D(t)$ denote a moving sound-soft
object with outward unit normal $\bn_\star$ pointing from the
wave region into the object. The target sharp-interface problem is the
acoustic wave equation in $\Omega\setminus\overline{D(t)}$, coupled
with the PML damping equation in $\Omega_{\rm PML}$. Figure~\ref{fig:pml}
illustrates the corresponding geometry.
\begin{figure}[H]
	\begin{center}
		\includegraphics[width=0.5\textwidth]{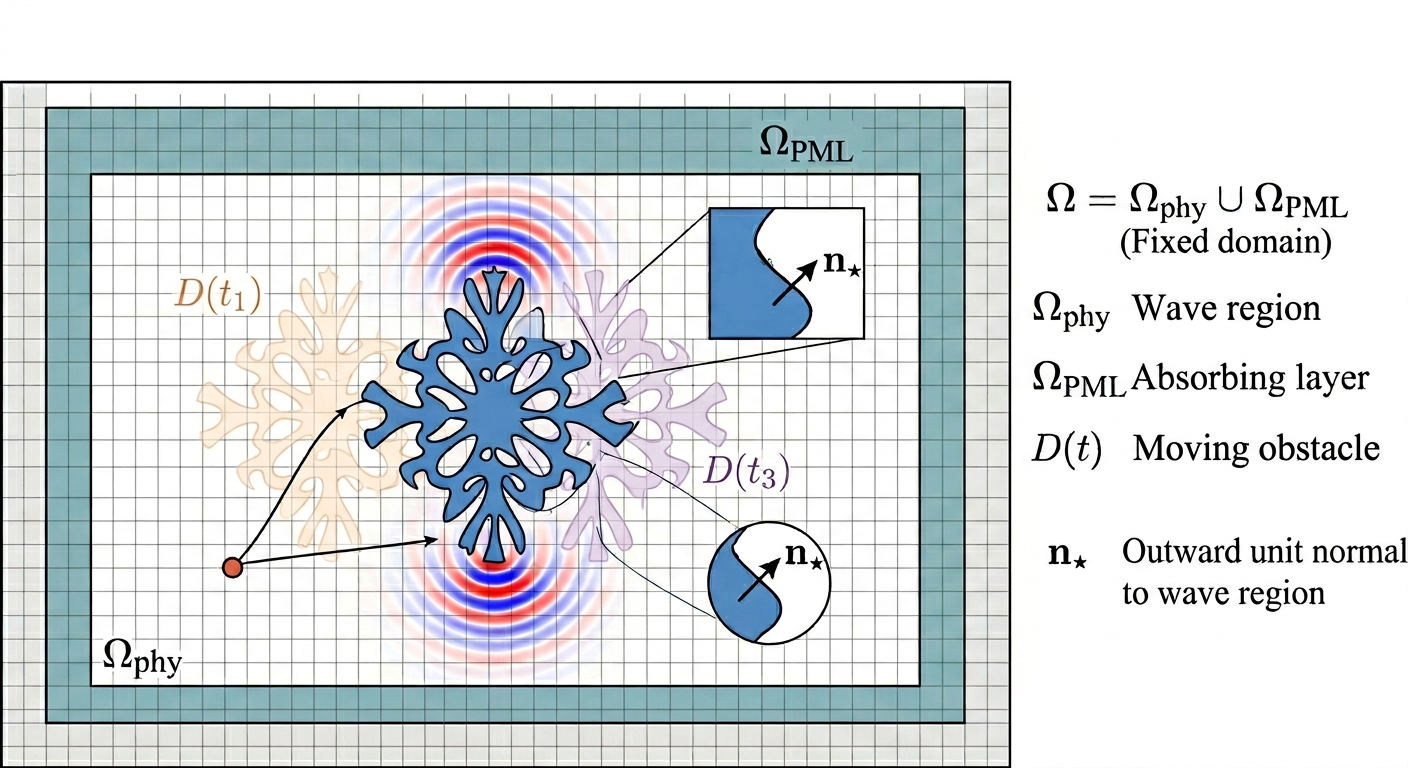}
	\end{center}
	\caption{Computational domain of the embedded
		moving-object model. The object remains inside the
		physical domain, while the outer collar is  a
		PML.}\label{fig:pml}
\end{figure}

We assume   that the moving object remains strictly inside
$\Omega_{\rm phy}$, away from the PML region . In particular, the PML
damping parameters vanish in a neighborhood of $\partial D(t)$.

To embed the moving object into the fixed domain, we introduce an
indicator $\psi(\bx,t)$ that approximates the characteristic function
of $\Omega\setminus\overline{D(t)}$
\begin{equation}
	\psi (\mathbf{x}, t) =
	\left\lbrace
	\begin{aligned}
		 & 1, &  & \mathbf{x} \in \Omega \setminus \overline{D(t)}, \\
		 & 0, &  & \mathbf{x} \in D(t).
	\end{aligned}
	\right.
\end{equation}
In the sense of distributions, the interface Dirac measure associated
with $\partial D(t)$ is given by the spatial gradient of
characteristic function $\psi$:
\begin{equation*}
	W(\mathbf{x}, t) = \delta_{\partial D(t)} =  -\nabla \psi \cdot
	\mathbf{n}_\star = |\nabla \psi|,
\end{equation*}
In practice, the presence of a sharp interface can give rise to
numerical instabilities. To improve
regularity and enhance numerical robustness, we replace the
discontinuous characteristic function $\psi$ with a smooth
approximation, $\psi_\varepsilon$, defined by
\begin{equation}\label{eq:smooth-profile}
	\psi_\vep(\bx,t)
	= \frac{1}{e^{6r(\bx,t)/\vep}+1},
	\qquad
	W_\vep(\bx,t)=|\nabla\psi_\vep(\bx,t)|,
\end{equation}
where $r(\bx,t)$ is the signed distance to $\partial D(t)$, negative
in the wave region and positive inside the object, and $\vep>0$ is
the diffuse-interface thickness.

For the sound-soft boundary
condition, the resulting PML-diffuse-embedded (\PMLDE{}) system is given by \cite{Gu2026}
\begin{equation}\label{eq:wave-pml-embedding-soft}
	\left\lbrace
	\begin{aligned}
		 & \psi_\varepsilon p_{tt} + a\psi_\varepsilon p_t +
		b\psi_\varepsilon p + \tfrac{W_\varepsilon}{\eta_d}
		p + (1-\psi_\varepsilon)
		(\alpha p_t + \beta p)
		= c^2 \nabla \cdot (\psi_\varepsilon \nabla p) + c
		\nabla \cdot (\psi_\varepsilon \bm{\lambda}),                          \\
		 & \bm{\lambda}_t + \varGamma_1 \bm{\lambda} = c \varGamma_2 \nabla p.
	\end{aligned}
	\right.
\end{equation}
Here $\eta_d$ penalizes the interfacial trace of $p$, while
$\alpha\ge 0$ and $\beta>0$ damp the fictitious field inside the
object \cite{Gu2026}. The interior damping is important in practice because the
embedded region should not sustain spurious oscillations while the
object moves across the Cartesian grid.

For the sound-hard boundary condition, the PML-DE system is given by the following
\begin{equation}\label{eq:wave-pml-embedding-hard}
	\left\lbrace
	\begin{aligned}
		 & \psi_\varepsilon p_{tt} + a \psi_\varepsilon p_t + b
		\psi_\varepsilon p - \tfrac{1 - \psi_\varepsilon}{\eta_n}\left(\nabla
		\psi_\varepsilon \cdot \nabla p - {\color{blue}a_{n_\star}(t)}\right)  \\
		 & \qquad + H(\hat{\psi} -
		\psi_\varepsilon) (\alpha p_t + \beta p) = c^2 \nabla \cdot
		(\psi_\varepsilon \nabla p) + c \nabla \cdot (\psi_\varepsilon
		\bm{\lambda}),                                                         \\
		 & \bm{\lambda}_t + \varGamma_1 \bm{\lambda} = c \varGamma_2 \nabla p,
	\end{aligned}
	\right.
\end{equation}
{Here $a_{n_\star}(t)$ denotes the prescribed normal acceleration of the moving boundary in the embedded sound-hard condition. In all sound-hard numerical tests reported in Section~\ref{sec:numerical}, the object undergoes rigid translation with constant velocity; hence $a_{n_\star}(t)=0$, and the acceleration correction in \eqref{eq:wave-pml-embedding-hard} drops out.} In general, this term must be retained.
$H(\cdot)$ denotes the Heaviside function and $\hat{\psi}$ is a
user-defined threshold.  $H(\hat{\psi}-\psi_\varepsilon)$
restricts the damping mechanism to the interior of the object,
maintaining a finite separation from the diffuse interface. In
contrast to the sound-soft formulation, the penalty is  not
applied at the interface, but only serves to regularize the
fictitious interior field. {Thus the present sound-hard, numerical validation focuses on the zero-acceleration, uniformly translating object; accelerated motion with sound-hard boundary conditions requires a separate analysis and is thus left to our future work.}

Because \(D(t)\) stays away from \(\Omega_{\mathrm{PML}}\), the
diffuse-interface
region and the PML damping paraters have disjoint support, we assume
\begin{equation}\label{eq:embedding-support-condition}
	\xi_j(1-\psi_\varepsilon)=0,\qquad \xi_j W_\varepsilon=0,
	\quad j=1,2.
\end{equation}
Equivalently, all PML coefficients derived from \(\xi_1\) and \(\xi_2\),
in particular
\(\varGamma_1,\varGamma_2,\widetilde{\varGamma}_1,a,\) and \(b\),
satisfy the same support condition.

	{For the energy proof,} we introduce the same auxiliary variables as in
Section~\ref{sec:PML-wave}, namely $q := p_t + ap$ and
$\bm{\chi} := c\nabla p + \bm{\lambda}$, and define the weighted
coefficient functions
$\Theta := \psi_\varepsilon b + \tfrac{W_\varepsilon}{\eta_d} +
	(1-\psi_\varepsilon)\beta$ and
$\Theta_t := (b-\beta)\partial_t \psi_\varepsilon +
	\tfrac{1}{\eta_d} \partial_t
	W_\varepsilon$.

\begin{thm}\label{thm:wave-pml-embedding-energy-cont}
	Under the support condition, \eqref{eq:embedding-support-condition},
	every strong solution of \eqref{eq:wave-pml-embedding-soft} satisfies
	\begin{equation}\label{eq:wave-pml-embedding-energy-con}
		\frac{d\mathcal{E}_{\rm embed}}{dt}
		= -\mathcal{D}_{\rm embed} + \mathcal{R}_{\rm embed},
	\end{equation}
	where
	\begin{equation*}
		\begin{aligned}
			\mathcal{E}_{\rm embed}(t)
			 & = \frac12\Bigl(
			\|q\|_{\psi_\varepsilon}^2 + \|\bm{\chi}\|_{\psi_\varepsilon}^2
			+ \|\bm{\lambda}\|_{\psi_\varepsilon a^{-1}\varGamma_1}^2
			\Bigr) + \frac12\|p\|_{\Theta}^2,                                                                                                                                      \\
			\mathcal{D}_{\rm embed}(t)
			 & = \|p\|_{ab\psi_\varepsilon}^2 + \|p_t\|_{\alpha(1-\psi_\varepsilon)}^2
			+ \|\bm{\chi}\|_{\psi_\varepsilon\varGamma_1(I+a^{-1}\varGamma_1)}^2
			+ c^2\|\nabla p\|_{\psi_\varepsilon a^{-1}b}^2,                                                                                                                        \\
			\mathcal{R}_{\rm embed}(t)
			 & = \frac12 \int_\Omega \partial_t \psi_\varepsilon \left(q^2 + |\bm{\chi}|^2 + \bm{\lambda}^\top a^{-1} \varGamma_1 \bm{\lambda} \right) + \Theta_t p^2 d\mathbf{x}.
		\end{aligned}
	\end{equation*}
\end{thm}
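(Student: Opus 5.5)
We prove this by a direct weighted energy computation that mirrors the proof of Theorem~\ref{thm:dissipative-pml-continuous}, with $\psi_\varepsilon$ acting as a spatial weight. We work with the auxiliary variables $q=p_t+ap$ and $\bm{\chi}=c\nabla p+\bm{\lambda}$. Two conventions are needed:
\begin{itemize}
\item wherever $a=0$, the weights $a^{-1}\varGamma_1$, $a^{-1}b$ and $a^{-1}\varGamma_1^2$ are interpreted by continuity as zero, as is implicit in Section~\ref{sec:PML-wave};
\item the outer boundary conditions of \eqref{eq:wave-pml} are retained, namely $p=0$ and $\bm{\lambda}\cdot\mathbf n=0$ on $\partial\Omega$.
\end{itemize}

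\textbf{Step 1: differentiate the energy.} Differentiating $\mathcal{E}_{\rm embed}$ in time produces two kinds of terms. The first kind comes from differentiating the time-dependent weights. Since $a$, $b$ and $\varGamma_1$ do not depend on time, and $\partial_t\Theta=(b-\beta)\partial_t\psi_\varepsilon+\eta_d^{-1}\partial_tW_\varepsilon=\Theta_t$, these terms give exactly $\mathcal R_{\rm embed}$. The second kind is
\[
(\psi_\varepsilon q,q_t)+(\psi_\varepsilon\bm{\chi},\bm{\chi}_t)+(\psi_\varepsilon a^{-1}\varGamma_1\bm{\lambda},\bm{\lambda}_t)+(\Theta p,p_t),
\]
and it remains to show that this equals $-\mathcal D_{\rm embed}$.

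\textbf{Step 2: rewrite the $p$-equation in terms of $q$.} Using the definition of $\Theta$, the first equation of \eqref{eq:wave-pml-embedding-soft} becomes
\[
\psi_\varepsilon q_t=c\nabla\cdot(\psi_\varepsilon\bm{\chi})-\Theta p-\alpha(1-\psi_\varepsilon)p_t .
\]
The $\bm{\lambda}$-equation is unchanged from Section~\ref{sec:PML-wave}. Hence the pointwise identities $\bm{\chi}_t=c\nabla q-2\varGamma_1\bm{\chi}+\varGamma_1\bm{\lambda}$ and $\bm{\lambda}_t=-\widetilde\varGamma_1\bm{\lambda}+\varGamma_2\bm{\chi}$ still hold. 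We multiply these by $\psi_\varepsilon$ and substitute into the expression from Step 1.

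\textbf{Step 3: cancel the transport terms.} Integrating by parts, $c(\nabla\cdot(\psi_\varepsilon\bm{\chi}),q)=-c(\psi_\varepsilon\bm{\chi},\nabla q)$. The boundary term vanishes because $q=p_t+ap=0$ on $\partial\Omega$. The resulting term cancels exactly against $c(\psi_\varepsilon\bm{\chi},\nabla q)$ from the $\bm{\chi}$-equation.

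\textbf{Step 4: use the support condition.} By \eqref{eq:embedding-support-condition}, on the set where $1-\psi_\varepsilon\neq0$ or $W_\varepsilon\neq0$ we have $a=b=0$ and $q=p_t$. This gives
\[
-(\Theta p,q)=-(\Theta p,p_t)-(a\Theta p,p)=-(\Theta p,p_t)-\|p\|^2_{ab\psi_\varepsilon},
\]
where $a\Theta=ab\psi_\varepsilon$. The first term cancels the $(\Theta p,p_t)$ term from Step 1. Likewise,
\[
-(\alpha(1-\psi_\varepsilon)p_t,q)=-\|p_t\|^2_{\alpha(1-\psi_\varepsilon)} .
\]

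\textbf{Step 5: the PML quadratic form.} The remaining terms are
\[
-2(\psi_\varepsilon\bm{\chi},\varGamma_1\bm{\chi})+2(\psi_\varepsilon a^{-1}\varGamma_1\bm{\lambda},\widetilde\varGamma_1\bm{\chi})-(\psi_\varepsilon a^{-1}\varGamma_1\bm{\lambda},\widetilde\varGamma_1\bm{\lambda}).
\]
Here we use $\varGamma_1\bm{\chi}=\varGamma_1\bm{\chi}$ split between the two equations, and $\varGamma_1$, $\varGamma_2$ are diagonal and commute. This is the same quadratic form as in the proof of Theorem~\ref{thm:dissipative-pml-continuous}, now weighted pointwise by $\psi_\varepsilon$. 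The identities $\widetilde\varGamma_1\varGamma_1=bI$ and $\varGamma_1^2-a\varGamma_1+bI=0$ are pointwise, so the same completion of squares gives
\[
-\|\bm{\chi}\|^2_{\psi_\varepsilon\varGamma_1(I+a^{-1}\varGamma_1)}-\|\bm{\chi}-\bm{\lambda}\|^2_{\psi_\varepsilon a^{-1}b}.
\]
Finally, $\bm{\chi}-\bm{\lambda}=c\nabla p$, which turns the last term into $c^2\|\nabla p\|^2_{\psi_\varepsilon a^{-1}b}$. Collecting Steps 1--5 yields \eqref{eq:wave-pml-embedding-energy-con}.

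\textbf{Expected main obstacle.} The delicate point is Step 4: checking that every cross term between the embedding coefficients ($W_\varepsilon/\eta_d$, $\alpha$, $\beta$) and the PML coefficients ($a$, $b$, $\varGamma_i$) vanishes. This relies only on the disjoint-support assumption \eqref{eq:embedding-support-condition}. A secondary point is handling $a^{-1}$ on the set where $a=0$. I would treat this by carrying out the Step 5 algebra only on the set where $a>0$, which is contained in $\Omega_{\rm PML}$ where $\psi_\varepsilon\equiv1$, and noting that all weighted terms containing $a^{-1}$ vanish identically elsewhere.
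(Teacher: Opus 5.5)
Your argument is the paper's computation in a different order. You differentiate $\mathcal E_{\rm embed}$ and substitute the four-field system, reusing the pointwise algebra of Theorem~\ref{thm:dissipative-pml-continuous} with weight $\psi_\varepsilon$. The paper instead tests the $p$-equation with $q$ and tests the exact relation $\bm{\chi}_t+\varGamma_1\bm{\chi}=c(\partial_t+\widetilde{\varGamma}_1)\nabla p$ with $\mathbf g$. Steps~1 and~4 and the algebra of Step~5 are fine.

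The genuine gap is the claim in Step~2 that $\bm{\chi}_t=c\nabla q-2\varGamma_1\bm{\chi}+\varGamma_1\bm{\lambda}$ still holds pointwise. The damping $a=\xi_1(x)+\xi_2(y)$ is not constant in the collar, so $p_t=q-ap$ gives $\nabla p_t=\nabla q-a\nabla p-p\nabla a$. The correct identity is
\[
\bm{\chi}_t=c\nabla q-2\varGamma_1\bm{\chi}+\varGamma_1\bm{\lambda}-c\,p\nabla a .
\]
Consequently the cancellation you claim in Step~3 is incomplete, and your computation actually yields
\[
\frac{d\mathcal E_{\rm embed}}{dt}=-\mathcal D_{\rm embed}+\mathcal R_{\rm embed}-c\,(p\nabla a,\bm{\chi}),
\]
where $\psi_\varepsilon=1$ on the support of $\nabla a$. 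The extra term is indefinite and generically nonzero. For instance, with $\bm{\lambda}=0$ (e.g.\ at $t=0$) and $p=0$ on $\partial\Omega$, it equals $\tfrac{c^2}{2}(\Delta a,p^2)$. For the profile used, $\xi_i''$ is proportional to $\sin(2\pi s/L_i)$ and so changes sign across the layer. The disjoint-support condition you flagged as the main obstacle does not touch this term, since the term lives entirely inside the PML.

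The paper's own proof contains the same slip. The right-hand side of \eqref{eq:wave-pml-embedding-ene-chi} is really
\[
c\bigl(\psi_\varepsilon,\partial_t(\bm{\chi}^\top a^{-1}\varGamma_1\nabla p)\bigr)+c(\nabla p_t+a\nabla p,\bm{\chi})_{\psi_\varepsilon},
\]
which the paper then rewrites as $c(\nabla q,\bm{\chi})_{\psi_\varepsilon}$. The same term $p\nabla a$ is dropped when \eqref{eq:first-order-system} is derived from \eqref{eq:wave_pml}, and again in the proof of Lemma~\ref{lem:constraint}. So neither argument proves the identity as stated unless $\nabla a=0$ wherever $p\bm{\chi}\neq 0$ (e.g.\ constant damping). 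For the variable profile, the balance must carry the additional flux $-c(p\nabla a,\bm{\chi})$, or the model or energy must be changed so that this term cancels.

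Two minor points:
\begin{itemize}
\item Your justification ``$\varGamma_1\bm{\chi}=\varGamma_1\bm{\chi}$ split between the two equations'' should be replaced by
\[
(\psi_\varepsilon\bm{\chi},\varGamma_1\bm{\lambda})+(\psi_\varepsilon a^{-1}\varGamma_1\bm{\lambda},\varGamma_2\bm{\chi})=2(\psi_\varepsilon a^{-1}\varGamma_1\bm{\lambda},\widetilde{\varGamma}_1\bm{\chi}),
\]
which follows from $aI+\varGamma_2=2\widetilde{\varGamma}_1$.
\item $a^{-1}\varGamma_1$ does not extend to zero by continuity, since it equals $\mathrm{diag}(1,0)$ on the side strips. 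Any bounded convention is harmless, however, because $\bm{\lambda}_t=0$ where $a=0$.
\end{itemize}
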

\begin{proof}
	Multiplying the first equation of \eqref{eq:wave-pml-embedding-soft} by
	$q$ and using \eqref{eq:embedding-support-condition}, we obtain
	\begin{equation}\label{eq:wave-pml-embedding-ene-p}
		\frac12\bigl(\psi_\varepsilon,\partial_t|q|^2\bigr)
		+ \frac12\bigl(\Theta,\partial_t|p|^2\bigr)
		+ \|p\|_{ab\psi_\varepsilon}^2
		+ \|p_t\|_{\alpha(1-\psi_\varepsilon)}^2
		= -c(\bm{\chi},\nabla q)_{\psi_\varepsilon}.
	\end{equation}
	Adding $c\nabla p_t + c\varGamma_1\nabla p$ to the second equation of
	\eqref{eq:wave-pml-embedding-soft} gives
	\begin{equation}\label{eq:wave-pml-embedding-chi}
		\bm{\chi}_t + \varGamma_1\bm{\chi}
		= c(\partial_t + \widetilde{\varGamma}_1)\nabla p,
		\quad
		\widetilde{\varGamma}_1 := \varGamma_1 + \varGamma_2.
	\end{equation}
	Multiplying \eqref{eq:wave-pml-embedding-chi} by
	$\mathbf g := \psi_\varepsilon[(I+a^{-1}\varGamma_1)\bm{\chi} -
			ca^{-1}\varGamma_1\nabla p]$
	yields
	\begin{equation}\label{eq:wave-pml-embedding-ene-chi}
		\begin{aligned}
			 & \frac12\Bigl(\psi_\varepsilon,
			          \partial_t\bigl[|\bm{\chi}|^2
			          + \bm{\chi}^{\top}a^{-1}\varGamma_1\bm{\chi}
			          + c^2(\nabla p)^{\top}a^{-1}\varGamma_1\nabla p\bigr]\Bigr) \\
			 & \qquad
			+ \|\bm{\chi}\|_{\varGamma_1(I+a^{-1}\varGamma_1)}^2
			+ c^2\|\nabla p\|_{ a^{-1}b}^2                                        \\
			 & =
			c\bigl(\psi_\varepsilon,\partial_t(\bm{\chi}^{\top}a^{-1}\varGamma_1\nabla
			p)\bigr)
			+ c(\nabla q,\bm{\chi})_{\psi_\varepsilon},
		\end{aligned}
	\end{equation}
	where we have used
	$\varGamma_1\widetilde{\varGamma}_1=bI$ and
	$a^{-1}\varGamma_1(\varGamma_1+\widetilde{\varGamma}_1)
		+ \widetilde{\varGamma}_1 = aI$.
	Since
	\[
		\bm{\lambda}^{\top}a^{-1}\varGamma_1\bm{\lambda}
		= \bm{\chi}^{\top}a^{-1}\varGamma_1\bm{\chi}
		+ c^2(\nabla p)^{\top}a^{-1}\varGamma_1\nabla p
		- 2c\,\bm{\chi}^{\top}a^{-1}\varGamma_1\nabla p,
	\]
	adding \eqref{eq:wave-pml-embedding-ene-p} and
	\eqref{eq:wave-pml-embedding-ene-chi} gives
	\begin{equation}\label{eq:wave-pml-embedding-balance-pre}
		\begin{aligned}
			 & \frac12\bigl(\psi_\varepsilon,
			\partial_t[|q|^2 + |\bm{\chi}|^2
				+ \bm{\lambda}^{\top}a^{-1}\varGamma_1\bm{\lambda}]\bigr)
			+ \frac12\bigl(\psi_\varepsilon b + \tfrac{W_\varepsilon}{\eta_d} +
			(1-\psi_\varepsilon)\beta,\partial_t|p|^2\bigr) =
			-\mathcal{D}_{\rm embed}(t).
		\end{aligned}
	\end{equation}
	Finally, the weighted chain rule
	$\frac12(w,\partial_t|z|^2) = \frac12\frac{d}{dt}\|z\|_w^2 -
		\frac12 \int_\Omega w_t |z|^2 d\mathbf{x}$
	with $w=\psi_\varepsilon$ or $w=\Theta$ converts
	\eqref{eq:wave-pml-embedding-balance-pre} into
	\eqref{eq:wave-pml-embedding-energy-con}.
\end{proof}
We remark that the term, $\mathcal{R}_{\rm embed}(t)$, keeps track of the energy rate of change related to the motion of the moving object. It vanishes completely when the object is static. It is indefinite in general and nonzero only in the neighborhood of the moving boundary. Hence the strict dissipative property can be established only for the stationary-object.

\subsection{Formal asymptotic analysis of the \PMLDES}

\noindent \indent We now examine the relation between {\PMLDES},
\eqref{eq:wave-pml-embedding-soft}, and the sharp-interface PML system
\eqref{eq:wave_pml} as $\vep\to 0^+$. We analyze the
sound-soft case only. Because the moving object stays a positive distance
away from the PML collar, there exists a tubular neighborhood
$\omega_\delta(t):=\{\bx:\ |r(\bx,t)|<\delta\}$ of $\partial D(t)$ in
which the PML parameters vanish, that is,
\begin{equation*}
	\varGamma_1=\varGamma_2=0, \quad a=b=0 \quad \text{in } \omega_\delta(t).
\end{equation*}
Hence the interfacial layer at the boundary of the object is governed locally by the embedded wave
part of the model.

\paragraph{Outer expansion.}
On every compact subset of $\Omega\setminus\overline{D(t)}$ away from
$\partial D(t)$, one has $\psi_\vep\to 1$ and $W_\vep\to 0$ as
$\vep\to 0^+$. We therefore seek outer expansions of the solution as follows
\begin{equation}\label{eq:outer-expansion}
	p = p_0 + \vep p_1 + \cdots,
	\quad
	\bm{\lambda} = \bm{\lambda}_0 + \vep \bm{\lambda}_1 + \cdots.
\end{equation}
Substituting \eqref{eq:outer-expansion} into
\eqref{eq:wave-pml-embedding-soft} and collecting the leading-order
terms gives precisely the sharp PML system \eqref{eq:wave_pml} for
$(p_0,\bm{\lambda}_0)$ in the exterior region. In particular, on the
scattering side of the interface where $\varGamma_1=\varGamma_2=0$, the
leading-order equations reduce locally to
\begin{equation}\label{eq:outer-leading-interface}
	p_{0,tt}=c^2\Delta p_0 + c\nabla\cdot\bm{\lambda}_0,
	\quad
	\bm{\lambda}_{0,t}=0.
\end{equation}
Under the standard initialization $\bm{\lambda}_0(\bx,0)=0$ in
$\omega_\delta(t)$, we have $\bm{\lambda}_0\equiv 0$ there; so the
outer limit near the interface is simply the wave equation at the
exterior side.

\paragraph{Inner expansion.}
To resolve the diffuse layer, let $\mathbf{X}(s,t)$ be a smooth
parametrization of $\partial D(t)$ and write
\begin{equation*}
	\bx = \mathbf{X}(s,t) + r\,\bn_\star(s,t), \quad \bx \in \omega_\delta(t),
\end{equation*}
where $r=r(\bx,t)$ is the signed distance to $\partial D(t)$, negative
in the wave region and positive inside the object. Introducing the
stretched normal coordinate $z=r/\vep$, the differential operators take
the form
\begin{equation}\label{eq:inner-operators}
	\begin{aligned}
		\nabla     & = \frac1\vep \bn_\star\partial_z +
		\frac{1}{1+\vep z\ka}\nabla_s,                                           \\
		\Delta     & = \frac1{\vep^2}\partial_{zz}
		+ \frac1\vep\frac{\ka}{1+\vep z\ka}\partial_z
		+ \frac{1}{1+\vep z\ka}\nabla_s\cdot
		\left(\frac{1}{1+\vep z\ka}\nabla_s\right),                              \\
		\partial_t & = -\frac{\bv\cdot\bn_\star}{\vep}\partial_z + \frac{d}{dt},
	\end{aligned}
\end{equation}
where $\ka=\nabla_s\cdot\bn_\star$ is the curvature of the moving
interface and $\frac{d}{dt}$ denotes the effective time derivative in
$(z,t)$ coordinates.

We write the inner variables as
\begin{equation}\label{eq:inner-expansion}
	\bar p(s,z,t) = \bar p_0(s,z,t) + \vep \bar p_1(s,z,t)+\cdots,
	\quad
	\bar{\bm{\lambda}}(s,z,t)=\bar{\bm{\lambda}}_0(s,z,t)+\vep\bar{\bm{\lambda}}_1(s,z,t)+\cdots,
\end{equation}
with
\begin{equation*}
	\Psi(s,z,t)=\Psi_0(z)=\frac{1}{e^{6z}+1},
	\qquad
	W_\vep = -\frac1\vep \Psi_0'(z)+\mathcal{O}(1).
\end{equation*}
The matching conditions on the wave scattering side read
\begin{subequations}\label{eq:matching-conditions}
	\begin{align}
		\lim_{z\to -\infty}\bar p_0(s,z,t)
		                                                & = \lim_{r\to -0} p_0(s,r,t), \\
		\lim_{z\to -\infty}\partial_z^m \bar p_0(s,z,t) & = 0,
		\qquad m\ge 1.
	\end{align}
\end{subequations}
Because $\varGamma_1=\varGamma_2=0$ in $\omega_\delta(t)$, the inner
auxiliary field satisfies
\begin{equation}\label{eq:inner-lambda-leading}
	\partial_t \bar{\bm{\lambda}}_0 = 0.
\end{equation}
With the same zero initialization as above, we obtain
$\bar{\bm{\lambda}}_0\equiv 0$. Thus the leading-order inner problem for
$\bar p_0$ is exactly the one arising in the non-PML embedded wave
model.

If $\eta_d=\widehat{\eta}_d \varepsilon$, then the leading-order terms of
\eqref{eq:wave-pml-embedding-soft} in the inner layer are of order
$\mathcal{O}(\vep^{-2})$ and satisfy
\begin{equation}\label{eq:inner-leading-L1}
	\Psi_0(\bv\cdot\bn_\star)^2\partial_{zz}\bar p_0
	- c^2\partial_z\bigl(\Psi_0\partial_z\bar p_0\bigr)
	- \frac{1}{\widehat{\eta}_d} \Psi_0'\,\bar p_0 = 0.
\end{equation}
Multiplying \eqref{eq:inner-leading-L1} by $\bar p_0$, integrating over
$(-\infty,\infty)$, and using the matching conditions
\eqref{eq:matching-conditions}, we obtain
\begin{equation}\label{eq:inner-energy-identity}
	\int_{-\infty}^{\infty}
	\left[
		\bigl(c^2-(\bv\cdot\bn_\star)^2\bigr)\Psi_0 |\partial_z \bar p_0|^2
		+ \left(\frac12 (\bv\cdot\bn_\star)^2\Psi_0''
		-\frac{1}{\widehat{\eta}_d} \Psi_0'\right)|\bar p_0|^2
		\right]dz = 0.
\end{equation}
Since $\Psi_0>0$ and $\Psi_0'<0$, the sufficient conditions
$|\bv\cdot\bn_\star|<c$ and
$\frac12 (\bv\cdot\bn_\star)^2\Psi_0'' - \widehat L\Psi_0'\ge 0$
force $\bar p_0\equiv 0$. In particular,
\begin{equation}\label{eq:dirichlet-recovery-1}
	\lim_{r\to -0} p_0(s,r,t) = 0.
\end{equation}
Thus the leading-order outer solution satisfies the sound-soft boundary
condition.

If $\eta_d =\mathcal{O}(\vep^{k})$ with $k\ge 2$, then the dominant
balance yields directly
\begin{equation}\label{eq:inner-leading-Lk}
	-\Psi_0'\,\bar p_0 = 0,
\end{equation}
so again $\bar p_0\equiv 0$ and hence
\begin{equation}\label{eq:dirichlet-recovery-2}
	\lim_{r\to -0} p_0(s,r,t) = 0.
\end{equation}
Therefore, we recover the homogeneous Dirichlet boundary of $p$ at leading order.

\subsection{Structure-preserving semi-discretization of the \PMLDES}\label{subsec:pml-de-semidiscrete}

\noindent \indent We now turn to the temporal discretization of the {\PMLDES} and
employ the following semi-discrete scheme:

\begin{equation}\label{eq:wave-pml-embedding-leap}
	\left\lbrace
	\begin{aligned}
		 & \psi_\varepsilon^{n+1} D_t^2 p^n + a\psi_\varepsilon^{n+1} D_{2t} p^n
		+ b\psi_\varepsilon^{n+1} A_t^2 p^n +
		\tfrac{W^{n+1}_\varepsilon}{\eta_d} A_t^2 p^n                            \\
		 & \qquad + (1-\psi_\varepsilon^{n+1})\bigl(\alpha D_{2t} p^n +
		\beta p^{n+1}\bigr)
		= c^2 \nabla \cdot \bigl(\psi_\varepsilon^{n+1} \nabla A_t^2 p^n\bigr)
		+ c \nabla \cdot \bigl(\psi_\varepsilon^{n+1} A_t^2
		\bm{\lambda}^n\bigr),                                                    \\
		 & D_t^+ \bm{\lambda}^n + \varGamma_1 A_t^+ \bm{\lambda}^n = c
		\varGamma_2 A_t^+\nabla p^n.
	\end{aligned}
	\right.
\end{equation}
We denote
$q^n := D_t^+p^n + aA_t^+p^n$, $\bm{\chi}^n := c\nabla p^n +
	\bm{\lambda}^n$,
and introduce shorthand notations
$\Theta^{n+1} := b\psi_\varepsilon^{n+1}+
	\tfrac{W_\varepsilon^{n+1}}{\eta_d}$ and
$\dot\Theta^{n+1} := bD_t^-\psi_\varepsilon^{n+1}+
	\tfrac{1}{\eta_d}D_t^-W_\varepsilon^{n+1}$.

	{As in the continuous case, the term \(\mathcal{R}_{\rm embed}^n\) below is generated by the time-dependent embedding profile; it disappears for a stationary object.}
\begin{thm}\label{thm:wave-pml-embedding-energy-discrete}
	Assume that $\xi_i (1-\psi_\varepsilon^n)=0$ and
	$\xi_i W_\varepsilon^n=0$ for all
	$n\ge 0$. Then, for every
	$n\ge 1$, scheme \eqref{eq:wave-pml-embedding-leap} satisfies
	\begin{equation}\label{eq:wave-pml-embedding-energy-discrete}
		D_t^-\mathcal{E}_{\rm embed}^{n+\frac12}
		= -\mathcal{D}_{\rm embed}^n + \mathcal{R}_{\rm embed}^n,
	\end{equation}
	where
	\begin{equation*}
		\begin{aligned}
			\mathcal{E}_{\rm embed}^{n+\frac12}
			 & = \frac12\Bigl(
			\|q^n\|_{\psi_\varepsilon^{n+1}}^2
			+ \|A_t^+\bm{\chi}^n\|_{\psi_\varepsilon^{n+1}}^2
			+
			\|A_t^+\bm{\lambda}^n\|_{a^{-1}\varGamma_1}^2
			\Bigr) + \frac12\|A_t^+p^n\|_{\Theta^{n+1}}^2
			+ \frac12\bigl(\beta(1-\psi_\varepsilon^{n+1}),A_t^+|p^n|^2\bigr),                                                                                                                                         \\
			\mathcal{D}_{\rm embed}^n
			 & = \|A_t^2p^n\|_{ab}^2
			+ \|D_{2t}p^n\|_{(\alpha+\tau\beta)(1-\psi_\varepsilon^{n+1})}^2
			+ \|A_t^2\bm{\chi}^n\|_{\varGamma_1(I+a^{-1}\varGamma_1)}^2
			+ c^2\|A_t^2\nabla p^n\|_{a^{-1}b}^2,                                                                                                                                                                      \\
			\mathcal{R}_{\rm embed}^n
			 & = \frac12 \int_\Omega  D_t^- \psi_\varepsilon^{n+1} \left( |q^{n-1}|^2 + |A_t^+ \bm{\chi}^{n-1}|^2 + (A_t^+ \bm{\lambda}^{n-1})^\top a^{-1} \varGamma_1 (A_t^+ \bm{\lambda}^{n-1})  \right) d\mathbf{x} \\
			 & \qquad + \frac12 \int_\Omega \dot{\Theta}^{n+1} |A_t^+ p^{n-1}|^2 + \beta D_t^- \psi_\varepsilon^{n+1} \cdot A_t^+|p^{n-1}|^2 d\mathbf{x}.
		\end{aligned}
	\end{equation*}
\end{thm}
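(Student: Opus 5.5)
The plan is to mimic, at the discrete level, the proof of Theorem~\ref{thm:wave-pml-embedding-energy-cont}, using the half-step-shifted reconstruction idea from Theorem~\ref{thm:pml-semi-second-ene}. The natural test function for the first equation of \eqref{eq:wave-pml-embedding-leap} is the discrete analogue of $q$ centred at time level $n$, namely
\[
A_t^-q^n=D_{2t}p^n+aA_t^2p^n .
\]
For the $\bm{\lambda}$-equation I will first take its average over two consecutive steps, i.e.\ apply $A_t^-$, so that it lives at the same level $n$. Then I add $cD_t^+A_t^-\nabla p^n+c\varGamma_1A_t^2\nabla p^n$ to obtain the discrete counterpart of \eqref{eq:wave-pml-embedding-chi}:
\[
D_t^+A_t^-\bm{\chi}^n+\varGamma_1A_t^2\bm{\chi}^n=c\bigl(D_{2t}+\widetilde{\varGamma}_1A_t^2\bigr)\nabla p^n .
\]
I will test this with $\mathbf g^n:=\psi_\varepsilon^{n+1}\bigl[(I+a^{-1}\varGamma_1)A_t^2\bm{\chi}^n-ca^{-1}\varGamma_1A_t^2\nabla p^n\bigr]$.

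The key steps, in order, are as follows.
\begin{enumerate}
\item \emph{Kinetic term.} Use the identities $D_t^2p^n+aD_{2t}p^n=D_t^-q^n$ and $(D_t^-q^n,\,A_t^-q^n)=\tfrac12D_t^-|q^n|^2$ pointwise. With the frozen weight $\psi_\varepsilon^{n+1}$ this gives $\tfrac12(\psi_\varepsilon^{n+1},D_t^-|q^n|^2)$.
\item \emph{Potential terms.} The terms $b\psi_\varepsilon^{n+1}A_t^2p^n$ and $W_\varepsilon^{n+1}A_t^2p^n/\eta_d$ are paired with $D_{2t}p^n$; the support conditions kill the cross terms involving $a$ against $1-\psi_\varepsilon$ and $W_\varepsilon$. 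The pairing $(A_t^2p^n,D_{2t}p^n)=\tfrac12D_t^-|A_t^+p^n|^2$ yields $\tfrac12(\Theta^{n+1},D_t^-|A_t^+p^n|^2)$. The $aA_t^2p^n$ part produces $\|A_t^2p^n\|_{ab}^2$ exactly as in the continuous proof.
\item \emph{Interior damping.} The $\alpha$-term gives $\|D_{2t}p^n\|^2_{\alpha(1-\psi^{n+1}_\varepsilon)}$ directly. The implicit term $\beta p^{n+1}$ is handled by writing
\[
p^{n+1}=A_t^+ p^n\cdot(\text{avg})+\tau D_{2t}p^n\text{-type remainder},
\]
more precisely $p^{n+1}D_{2t}p^n=\tfrac12D_t^-(A_t^+|p^n|^2)+\tau|D_{2t}p^n|^2$ after checking the algebra. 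This is what produces the $\beta A_t^+|p|^2$ energy piece and the extra $\tau\beta$ dissipation.
\item \emph{Flux cancellation.} Integrate the right-hand side by parts; the boundary terms vanish by $p=0$ and $\bm\phi\cdot\mathbf n=0$. This leaves $-c(\bm{\chi}\text{-avg},\nabla A_t^-q^n)_{\psi^{n+1}_\varepsilon}$, which must cancel the $c(\nabla q,\bm\chi)$ term produced by testing the $\bm{\chi}$ equation with $\mathbf g^n$. The algebraic relations $\varGamma_1\widetilde{\varGamma}_1=bI$ and $a^{-1}\varGamma_1(\varGamma_1+\widetilde{\varGamma}_1)+\widetilde{\varGamma}_1=aI$ are used verbatim, together with the completion-of-squares identity for $\bm{\lambda}^\top a^{-1}\varGamma_1\bm{\lambda}$, applied to $A_t^+$-averaged quantities. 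In the PML region $\psi_\varepsilon\equiv1$, so the weight $\psi_\varepsilon^{n+1}$ on the $\bm{\lambda}$ energy can be dropped, matching the statement.
\item \emph{Discrete weighted chain rule.} Convert each $\tfrac12(w^{n+1},D_t^-|z^n|^2)$ via
\[
\tfrac12(w^{n+1},D_t^-|z^n|^2)=D_t^-\Bigl(\tfrac12\|z^n\|^2_{w^{n+1}}\Bigr)-\tfrac12\bigl(D_t^-w^{n+1},|z^{n-1}|^2\bigr),
\]
applied with $w=\psi_\varepsilon,\Theta,\beta(1-\psi_\varepsilon)$. The leftover terms assemble exactly into $\mathcal R^n_{\rm embed}$, which explains the $n-1$ indices there.
\end{enumerate}

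The main obstacle I expect is step 4. The difficulty is that $A_t^-\bm{\chi}^n$, $A_t^2\bm\chi^n$ and $A_t^-q^n$ must all line up so that the cross term $-c(\bm\chi,\nabla q)$ cancels exactly and the $\bm{\chi}$ energy emerges as $D_t^-\|A_t^+\bm\chi^n\|^2$. I will first verify this cancellation on the fixed-geometry leap-frog scheme, where Theorem~\ref{thm:pml-semi-second-ene} guarantees it works. I will then note that all $\psi$-weights are frozen at $t^{n+1}$ within the step, so the same identities hold pointwise with the extra factor $\psi_\varepsilon^{n+1}$. The only genuinely new contributions, from the time variation of the weights, are isolated in step 5. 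A secondary risk is the exact form of the $\beta p^{n+1}$ splitting in step 3; if it does not match, I would rewrite $p^{n+1}=A_t^2p^n+\tfrac{\tau}{2}D_t^+p^n+\dots$ and track the remainder into the dissipation term.
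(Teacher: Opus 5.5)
Your argument is the paper's argument. You use the same test functions $A_t^-q^n=D_{2t}p^n+aA_t^2p^n$ and $\mathbf g^n$, the same matrix identities, the same splitting of $\beta p^{n+1}D_{2t}p^n$, and the same discrete product rule. Steps 1--3 and the $\bm{\chi}$-computation are correct.

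\textbf{The gap: step~4 does not cancel.} This is the step you flagged, and the problem is genuine. Integrating the $p$-equation by parts gives, as you wrote, $-c\bigl(A_t^2\bm{\chi}^n,\nabla A_t^-q^n\bigr)_{\psi_\varepsilon^{n+1}}$. Testing the $\bm{\chi}$-equation with $\mathbf g^n$, and using $\varGamma_1\widetilde{\varGamma}_1=bI$ and $a^{-1}\varGamma_1(\varGamma_1+\widetilde{\varGamma}_1)+\widetilde{\varGamma}_1=aI$, yields $+c\bigl(A_t^2\bm{\chi}^n,(D_{2t}+aA_t^2)\nabla p^n\bigr)_{\psi_\varepsilon^{n+1}}$; no $\nabla q$ appears. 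Since
\[
\nabla A_t^-q^n-(D_{2t}+aA_t^2)\nabla p^n=(\nabla a)\,A_t^2p^n ,
\]
the sum leaves the sign-indefinite term $-c\bigl(A_t^2\bm{\chi}^n,(\nabla a)A_t^2p^n\bigr)_{\psi_\varepsilon^{n+1}}$. It is nonzero for the paper's profiles, because $a=\xi_1+\xi_2$ varies across the collar.

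The paper's proof has the same hole. It writes the flux term of the $p$-equation directly as $-c\bigl(A_t^2\bm{\chi}^n,(D_{2t}+aA_t^2)\nabla p^n\bigr)_{\psi_\varepsilon^{n+1}}$, which silently commutes $\nabla$ with multiplication by $a$. The same slip occurs in two earlier places:
\begin{itemize}
\item in deriving \eqref{eq:first-order-system}, where $c\nabla(q-ap)$ is written as $c\nabla q-ca\nabla p$;
\item in the proof of Lemma~\ref{lem:constraint}, where one actually gets $D_t^+\bm r^n=-aA_t^+\bm r^n+c(\nabla a)A_t^+p^n$, so $\bm r^n\not\equiv 0$.
\end{itemize}
Your fallback of checking the cancellation against Theorem~\ref{thm:pml-semi-second-ene} therefore cannot help, since that result rests on the same dropped term. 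A direct check of the fixed-geometry case ($\psi_\varepsilon\equiv 1$, $W_\varepsilon=0$) exposes the same residual.

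The stated identity is exact only if $\nabla a=0$ wherever $p$ and $\bm{\chi}$ are nonzero, for instance with spatially constant damping. Otherwise the extra term must be carried on the right-hand side. Then even the static-object identity is no longer a pure dissipation law.

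\textbf{A smaller point in step~5.} The product rule with $w=\beta(1-\psi_\varepsilon)$ produces on the left
\[
-\tfrac12\bigl(D_t^-[\beta(1-\psi_\varepsilon^{n+1})],A_t^+|p^{n-1}|^2\bigr)=+\tfrac12\bigl(\beta D_t^-\psi_\varepsilon^{n+1},A_t^+|p^{n-1}|^2\bigr),
\]
hence $-\tfrac12\bigl(\beta D_t^-\psi_\varepsilon^{n+1},A_t^+|p^{n-1}|^2\bigr)$ in the remainder. This is the opposite of the sign in the stated $\mathcal{R}^n_{\rm embed}$. It agrees with the $-\beta\,\partial_t\psi_\varepsilon$ contained in the continuous $\Theta_t$. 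The paper's own displayed product-rule line also gives this sign, so the statement has a sign typo there. Your leftovers therefore do not assemble \emph{exactly} into $\mathcal{R}^n_{\rm embed}$ as written.
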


\begin{proof}
	\enlargethispage{2\baselineskip}
	Using $D_t^-q^n = D_t^2p^n + aD_{2t}p^n$ and
	$A_t^-q^n = D_{2t}p^n + aA_t^2p^n$, we rewrite the first equation of
	\eqref{eq:wave-pml-embedding-leap}  into
	\begin{equation}\label{eq:wave-pml-discrete-p-rewrite}
		\psi_\varepsilon^{n+1}D_t^-q^n
		+ \Theta^{n+1}A_t^2p^n
		+ (1-\psi_\varepsilon^{n+1})(\alpha D_{2t}p^n + \beta p^{n+1})
		= c\nabla\!\cdot\!\bigl(\psi_\varepsilon^{n+1}A_t^2\bm{\chi}^n\bigr).
	\end{equation}
	Multiplying \eqref{eq:wave-pml-discrete-p-rewrite} with $A_t^-q^n$ and using
	$a(1-\psi_\varepsilon^n)=0$ and $aW_\varepsilon^n=0$ gives
	\begin{equation}\label{eq:wave-pml-discrete-p-energy}
		\begin{aligned}
			 & \frac12\bigl(\psi_\varepsilon^{n+1},D_t^-|q^n|^2\bigr)
			+
			\frac12\bigl(b\psi_\varepsilon^{n+1}+\tfrac{W_\varepsilon^{n+1}}{\eta_d},D_t^-|A_t^+p^n|^2\bigr)
			\\
			 & \qquad +
			\frac12\bigl(\beta(1-\psi_\varepsilon^{n+1}),D_t^-A_t^+|p^n|^2\bigr)
			+ \|A_t^2p^n\|_{ab}^2
			+ \|D_{2t}p^n\|_{(\alpha+\tau\beta)(1-\psi_\varepsilon^{n+1})}^2 \\
			 & \qquad = -c\bigl(A_t^2\bm{\chi}^n,(D_{2t}+aA_t^2)\nabla
			p^n\bigr)_{\psi_\varepsilon^{n+1}},
		\end{aligned}
	\end{equation}
	where we used
	$p^{n+1}D_{2t}p^n = \frac12 D_t^-A_t^+|p^n|^2 + \tau|D_{2t}p^n|^2$.

	Next, the second equation of \eqref{eq:wave-pml-embedding-leap} gives
	$D_t^-A_t^+\bm{\chi}^n + \varGamma_1A_t^2\bm{\chi}^n
		= c(D_{2t}+\widetilde{\varGamma}_1A_t^2)\nabla p^n$.
	Multiplying this equation with
	$\mathbf g^n :=
		\psi_\varepsilon^{n+1}[(I+a^{-1}\varGamma_1)A_t^2\bm{\chi}^n
			- ca^{-1}\varGamma_1A_t^2\nabla p^n]$
	and using
	$\varGamma_1\widetilde{\varGamma}_1=bI$ together with
	$a^{-1}\varGamma_1(\varGamma_1+\widetilde{\varGamma}_1)
		+ \widetilde{\varGamma}_1 = aI$, we find
	\begin{equation}\label{eq:wave-pml-discrete-chi-energy-pre}
		\begin{aligned}
			 & \frac12\Bigl(\psi_\varepsilon^{n+1},
			D_t^-\bigl[|A_t^+\bm{\chi}^n|^2
				     + (A_t^+\bm{\chi}^n)^\top a^{-1}\varGamma_1A_t^+\bm{\chi}^n
				     + c^2(A_t^+\nabla p^n)^\top a^{-1}\varGamma_1A_t^+\nabla
				     p^n\bigr]\Bigr) \\
			 & \qquad +
			\|A_t^2\bm{\chi}^n\|_{\varGamma_1(I+a^{-1}\varGamma_1)}^2
			+ c^2\|A_t^2\nabla p^n\|_{a^{-1}b}^2                             \\
			 & \qquad = c\Bigl(\psi_\varepsilon^{n+1},
			D_t^-\bigl[(A_t^+\bm{\chi}^n)^\top a^{-1}\varGamma_1A_t^+\nabla
				     p^n\bigr]\Bigr)
			+ c\bigl(A_t^2\bm{\chi}^n,(D_{2t}+aA_t^2)\nabla
			p^n\bigr)_{\psi_\varepsilon^{n+1}}.
		\end{aligned}
	\end{equation}
	Since $A_t^+\bm{\lambda}^n = A_t^+\bm{\chi}^n - cA_t^+\nabla p^n$,
	we also have
	\[
		(A_t^+\bm{\lambda}^n)^\top a^{-1}\varGamma_1A_t^+\bm{\lambda}^n
		= (A_t^+\bm{\chi}^n)^\top a^{-1}\varGamma_1A_t^+\bm{\chi}^n
		+ c^2(A_t^+\nabla p^n)^\top a^{-1}\varGamma_1A_t^+\nabla p^n
		- 2c(A_t^+\bm{\chi}^n)^\top a^{-1}\varGamma_1A_t^+\nabla p^n.
	\]
	Therefore \eqref{eq:wave-pml-discrete-chi-energy-pre} reduces to
	\begin{equation}\label{eq:wave-pml-discrete-chi-energy}
		\begin{aligned}
			 & \frac12\Bigl(\psi_\varepsilon^{n+1},
			D_t^-\bigl[|A_t^+\bm{\chi}^n|^2
				     + (A_t^+\bm{\lambda}^n)^\top
				     a^{-1}\varGamma_1A_t^+\bm{\lambda}^n\bigr]\Bigr)    \\
			 & \qquad +
			\|A_t^2\bm{\chi}^n\|_{\varGamma_1(I+a^{-1}\varGamma_1)}^2
			+ c^2\|A_t^2\nabla p^n\|_{a^{-1}b}^2                      \\
			 & \qquad = c\bigl(A_t^2\bm{\chi}^n,(D_{2t}+aA_t^2)\nabla
			p^n\bigr)_{\psi_\varepsilon^{n+1}}.
		\end{aligned}
	\end{equation}
	Adding \eqref{eq:wave-pml-discrete-p-energy} and
	\eqref{eq:wave-pml-discrete-chi-energy}, we notice that the coupling terms cancel
	and obtain
	\begin{equation}\label{eq:wave-pml-discrete-energy-pre}
		\begin{aligned}
			 & \frac12\Bigl(\psi_\varepsilon^{n+1},
			D_t^-\bigl[|q^n|^2 + |A_t^+\bm{\chi}^n|^2
				     + (A_t^+\bm{\lambda}^n)^\top
				     a^{-1}\varGamma_1A_t^+\bm{\lambda}^n\bigr]\Bigr)        \\
			 & \qquad + \frac12\bigl(\Theta^{n+1},D_t^-|A_t^+p^n|^2\bigr)
			+
			\frac12\bigl(\beta(1-\psi_\varepsilon^{n+1}),D_t^-A_t^+|p^n|^2\bigr)
			\\
			 & \qquad = -\mathcal{D}_{\rm embed}^n.
		\end{aligned}
	\end{equation}
	To convert \eqref{eq:wave-pml-discrete-energy-pre} into {the stated weighted energy balance}, we use
	the discrete product rule
	$\,(w^{n+1},D_t^-r^n) = D_t^-(w^{n+1},r^n) - (D_t^-w^{n+1},r^{n-1})$.
	In particular,
	\begin{align*}
		\frac12\bigl(\psi_\varepsilon^{n+1},D_t^-|q^n|^2\bigr)
		 & = D_t^-\frac12\|q^n\|_{\psi_\varepsilon^{n+1}}^2
		- \frac12 \int_\Omega D_t^- \psi_\varepsilon^{n+1} |q^{n-1}|^2 d\mathbf{x}, \\
		\frac12\bigl(\Theta^{n+1},D_t^-|A_t^+p^n|^2\bigr)
		 & = D_t^-\frac12\|A_t^+p^n\|_{\Theta^{n+1}}^2
		- \frac12 \int_\Omega  \dot{\Theta}^{n+1}  |A_t^+p^{n-1}|^2 d\mathbf{x},    \\
		\frac12\bigl(\beta(1-\psi_\varepsilon^{n+1}),D_t^-A_t^+|p^n|^2\bigr)
		 & = D_t^-\frac12\bigl(\beta(1-\psi_\varepsilon^{n+1}),A_t^+|p^n|^2\bigr)
		+ \frac12\bigl(\beta D_t^-\psi_\varepsilon^{n+1},A_t^+|p^{n-1}|^2\bigr).
	\end{align*}
	Applying the same product rule to the remaining weighted terms in
	\eqref{eq:wave-pml-discrete-energy-pre} yields
	\eqref{eq:wave-pml-embedding-energy-discrete}.
\end{proof}

\begin{rem}
	If the embedding profile is time-independent,
	$D_t^-\psi_\varepsilon^{n+1} = D_t^-W_\varepsilon^{n+1} = 0$,
	the remainder term,
	$\mathcal{R}_{\rm embed}^n$, vanishes, and the proposed scheme
	preserves an energy dissipation property in the case of a static object.
\end{rem}

\subsection{Fully discrete adaptive finite difference scheme}

\noindent \indent We now discretize the spatial operators on a Cartesian mesh in a way
that remains both compact and compatible with the energy property of
	{\S~\ref{subsec:pml-de-semidiscrete}}. The key point is that the discrete Laplace operator and
its companion gradient--divergence pair must match. For this
reason, we place the scalar variable $p_h$ at cell centers and the two
components of the auxiliary vector $\bm{\lambda}_h$ on a staggered edge
grid, respectively. This yields a compact conservative stencil for the weighted
Laplace operator and, at the same time, an exact discrete
summation-by-parts identity.

Let $\Omega=(x_L,x_R)\times (y_L,y_R)$, and let $N_x, \ N_y\in\mathbb{N}$.
Set $h_x=(x_R-x_L)/N_x$ and $h_y=(y_R-y_L)/N_y$. We define cell
centers at
\[
	x_i=x_L+\Bigl(i+\frac12\Bigr)h_x,\quad
	y_j=y_L+\Bigl(j+\frac12\Bigr)h_y,
	\quad 0\le i\le N_x-1,\quad 0\le j\le N_y-1,
\]
and introduce the horizontal and vertical edge locations at
\[
	x_{i+\frac12}=x_L+(i+1)h_x,\quad
	y_{j+\frac12}=y_L+(j+1)h_y.
\]
The pressure, $p_h=(p_{i,j})$, is stored at cell centers, whereas the
first and second components of $\bm{\lambda}_h$ are stored at the
horizontal and vertical edge grids,
\[
	\lambda_{1,h}=(\lambda_{1,i+\frac12,j}),
	\quad
	\lambda_{2,h}=(\lambda_{2,i,j+\frac12}).
\]
For a cell-centered scalar grid function $u_h$ and an edge-centered
vector grid function
$\bm{v}_h=(v_{1,h},v_{2,h})$, we define
\begin{equation}
	\begin{aligned}
		 & (D_x^+u_h)_{i+\frac12,j}=\frac{u_{i+1,j}-u_{i,j}}{h_x},
		 &                                                                       & (D_y^+u_h)_{i,j+\frac12}=\frac{u_{i,j+1}-u_{i,j}}{h_y},           \\
		 & (D_x^-v_{1,h})_{i,j}=\frac{v_{1,i+\frac12,j}-v_{1,i-\frac12,j}}{h_x},
		 &                                                                       &
		(D_y^-v_{2,h})_{i,j}=\frac{v_{2,i,j+\frac12}-v_{2,i,j-\frac12}}{h_y},                                                                        \\
		 & (A_x^+ u_h)_{i+\frac{1}{2}, j} = \frac{u_{i+1, j} + u_{i, j}}{2},
		 &                                                                       & (A_y^+ u_h)_{i, j+\frac{1}{2}} = \frac{u_{i, j+1} + u_{i, j}}{2}.
	\end{aligned}
\end{equation}
Then, we introduce the staggered gradient and divergence
\[
	\nabla_h^+u_h := (D_x^+u_h,D_y^+u_h)^\top,
	\quad
	\nabla_h^-\!\cdot\!\bm{v}_h := D_x^-v_{1,h}+D_y^-v_{2,h}.
\]

For a given function $\omega(\mathbf{x})$, we define the weighted
discrete divergence and weighted discrete Laplacian operator,
\begin{equation}\label{eq:weighted-discrete-operators}
	\mathcal
	D_h^{\omega}\bm{v}_h:=\nabla_h^-\!\cdot\!(\omega
	\bm{v}_h),
	\quad
	\mathcal L_h^{\omega}u_h:=\mathcal D_h^{\omega}(\nabla_h^+u_h).
\end{equation}
Here, we define
\begin{equation}
	(\omega  \bm{v}_h)_{1, i+\frac12, j} =
	\omega_{i+\frac12, j} v_{1, i+\frac12, j}, \quad (\omega
	\bm{v}_h)_{2, i, j+\frac12} =
	\omega_{i, j + \frac12} v_{1, i, j+\frac12}.
\end{equation}
In componentwise forms,
\[
	(\mathcal L_h^{\omega}u_h)_{i,j}
	= \frac{\omega_{i+\frac12,j}(u_{i+1,j}-u_{i,j})
		-\omega_{i-\frac12,j}(u_{i,j}-u_{i-1,j})}{h_x^2}
	+ \frac{\omega_{i,j+\frac12}(u_{i,j+1}-u_{i,j})
		-\omega_{i,j-\frac12}(u_{i,j}-u_{i,j-1})}{h_y^2}.
\]
For cell-centered quantities, we use the inner product,
\[
	(u_h,v_h)_{h}=h_xh_y\sum_{i=0}^{N_x-1}\sum_{j=0}^{N_y-1}u_{i,j}v_{i,j},
\]
whereas for edge-centered vectors, we set
\[
	(\bm{v}_h,\bm{w}_h)_{e}
	= h_xh_y\sum_{i,j}\left(v_{1,i+\frac12,j}w_{1,i+\frac12,j}
	+ v_{2,i,j+\frac12}w_{2,i,j+\frac12}\right).
\]
Weighted cell and edge inner products are defined in the obvious way.
With the homogeneous outer-boundary closure, the staggered operators
satisfy the summation-by-parts identities,
\begin{equation}\label{eq:sbp-identity}
	(\mathcal D_h^{\omega}\bm{v}_h,u_h)_{h}
	= -(\bm{v}_h,\nabla_h^+u_h)_{\omega,e},
	\quad
	(\mathcal L_h^{\omega}u_h,v_h)_{h}
	= -(\nabla_h^+u_h,\nabla_h^+v_h)_{\omega,e}.
\end{equation}

Denote the discrete fields at
time level $t^n$ by $p_h^n$, $\bm{\lambda}_h^n$. The fully discrete counterpart of
\eqref{eq:wave-pml-embedding-leap} is
\begin{equation}\label{eq:wave-pml-embedding-fully-discrete}
	\left\lbrace
	\begin{aligned}
		 & \psi_\varepsilon^{n+1} D_t^2 p_h^n + a\psi_\varepsilon^{n+1}
		D_{2t} p_h^n
		+ b\psi_\varepsilon^{n+1} A_t^2 p_h^n +
		\tfrac{W_\varepsilon^{n+1}}{\eta_d}
		A_t^2 p_h^n                                                      \\
		 & \qquad + (1-\psi_\varepsilon^{n+1})\bigl(\alpha D_{2t}p_h^n +
		\beta p_h^{n+1}\bigr)
		= c^2\mathcal L_h^{\psi_\varepsilon^{n+1}}(A_t^2 p_h^n)
		+ c\mathcal D_h^{\psi_\varepsilon^{n+1}}(A_t^2\bm{\lambda}_h^n), \\
		 & D_t^+ \bm{\lambda}_h^n + \varGamma_{1} A_t^+\bm{\lambda}_h^n
		= c\varGamma_{2} A_t^+\nabla_h^+ p_h^n.
	\end{aligned}
	\right.
\end{equation}
As in the semi-discrete analysis, we introduce auxiliary fields
\begin{equation}
	q_h^n:=D_t^+p_h^n+aA_t^+p_h^n,
	\quad
	\bm{\chi}_h^n:=c\nabla_h^+p_h^n+\bm{\lambda}_h^n,
\end{equation}
and shorthand notations
\begin{equation}
	\Theta^{n+1}_h:=b\psi_\varepsilon^{n+1}+ \tfrac{1}{\eta_d}
	W_\varepsilon^{n+1},
	\quad
	\dot\Theta^{n+1}_h:=b
	D_t^-\psi_\varepsilon^{n+1}+\tfrac{1}{\eta_d}D_t^-W_\varepsilon^{n+1}.
\end{equation}

\begin{thm}\label{thm:wave-pml-embedding-energy-fully-discrete}
	Assume that the staggered operators satisfy
	\eqref{eq:sbp-identity}, $\xi_i(1-\psi_\varepsilon^n)=0$ and
	$\xi_i W_\varepsilon^n=0$ for all $n\ge 0$.
	Then, for every $n\ge 1$, the fully discrete scheme,
	\eqref{eq:wave-pml-embedding-fully-discrete}, satisfies
	\begin{equation}\label{eq:wave-pml-embedding-energy-fully-discrete}
		D_t^-\mathcal{E}_{h,{\rm embed}}^{n+\frac12}
		= -\mathcal{D}_{h,{\rm embed}}^n + \mathcal{R}_{h,{\rm embed}}^n,
	\end{equation}
	where
	\begin{equation*}
		\begin{aligned}
			\mathcal{E}_{h,{\rm embed}}^{n+\frac12}
			 & = \frac12\Bigl(
			\|q_h^n\|_{\psi_\varepsilon^{n+1},h}^2
			+ \|A_t^+\bm{\chi}_h^n\|_{\psi_\varepsilon^{n+1},e}^2
			+
			\|A_t^+\bm{\lambda}_h^n\|_{a^{-1}\varGamma_1,e}^2
			\Bigr)                                                                                                                                                                                        \\
			 & \quad + \frac12\|A_t^+p_h^n\|_{\Theta_h^{n+1},h}^2
			+
			\frac12\bigl(\beta(1-\psi_\varepsilon^{n+1}),A_t^+|p_h^n|^2\bigr)_{h},
			\\
			\mathcal{D}_{h,{\rm embed}}^n
			 & = \|A_t^2p_h^n\|_{a b,h}^2
			+ \|D_{2t}p_h^n\|_{(\alpha+\tau\beta)(1-\psi_\varepsilon^{n+1}),h}^2                                                                                                                          \\
			 & \quad +
			\|A_t^2\bm{\chi}_h^n\|_{\varGamma_1(I+a^{-1}\varGamma_{1}),e}^2
			+ c^2\|A_t^2\nabla_h^+p_h^n\|_{a^{-1}b,e}^2,
			\\
			\mathcal{R}_{h,{\rm embed}}^n
			 & = \frac12 \Bigl( \left(q_h^{n-1}, D_t^- \psi_\varepsilon^{n+1} q_h^{n-1}\right)_h + \left(A_t^+ \bm{\chi}_h^{n-1}, D_t^- \psi_\varepsilon^{n+1} A_t^+ \bm{\chi}_h^{n-1}\right)_e           \\
			 & \qquad + \left(A_t^+ \bm{\lambda}_h^{n-1}, D_t^- \psi_\varepsilon^{n+1} a^{-1} \varGamma_1 A_t^+ \bm{\lambda}_h^{n-1}\right)_e + (A_t^+ p_h^{n-1}, \dot{\Theta}_h^{n+1} A_t^+ p_h^{n-1})_h \\
			 & \qquad + \bigl(\beta
			D_t^-\psi_\varepsilon^{n+1},A_t^+|p_h^{n-1}|^2\bigr)_h  \Bigr).
		\end{aligned}
	\end{equation*}
\end{thm}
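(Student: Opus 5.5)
The plan is to transcribe the proof of Theorem~\ref{thm:wave-pml-embedding-energy-discrete} line by line to the grid. The only new ingredient is that every spatial integration by parts is replaced by the summation-by-parts identity \eqref{eq:sbp-identity}. All time-difference manipulations act pointwise at each grid node (cell centre or edge), so they carry over without change.

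\emph{Step 1: the pressure equation.} Using $D_t^-q_h^n=D_t^2p_h^n+aD_{2t}p_h^n$ and $A_t^-q_h^n=D_{2t}p_h^n+aA_t^2p_h^n$, rewrite the first equation of \eqref{eq:wave-pml-embedding-fully-discrete} as in \eqref{eq:wave-pml-discrete-p-rewrite}. The right-hand side becomes $c\,\mathcal D_h^{\psi_\varepsilon^{n+1}}(A_t^2\bm{\chi}_h^n)$, which follows from $\mathcal L_h^{\omega}=\mathcal D_h^{\omega}\nabla_h^+$ and linearity of $\mathcal D_h^\omega$. Take the cell inner product $(\cdot,\cdot)_h$ with $A_t^-q_h^n$. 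The left side yields the same pointwise identities as before: the products of $q$, the $\Theta_h^{n+1}$ term, and the identity $p^{n+1}D_{2t}p^n=\tfrac12D_t^-A_t^+|p^n|^2+\tau|D_{2t}p^n|^2$. The cross terms $a(1-\psi_\varepsilon^{n+1})$ and $aW_\varepsilon^{n+1}$ vanish by the support assumption; this needs to hold nodewise at cell centres, which is how the hypothesis is read. On the right side, \eqref{eq:sbp-identity} gives
\[
 -c\bigl(A_t^2\bm{\chi}_h^n,\nabla_h^+(D_{2t}+aA_t^2)p_h^n\bigr)_{\psi_\varepsilon^{n+1},e}.
\]

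\emph{Step 2: the auxiliary equation.} Add $c\nabla_h^+$ applied to $D_t^+p_h^n+\varGamma_1A_t^+p_h^n$ to the $\bm\lambda_h$-equation. Then apply $A_t^-$ to obtain the edge analogue of \eqref{eq:wave-pml-embedding-chi}:
\[
D_t^-A_t^+\bm\chi_h^n+\varGamma_1A_t^2\bm\chi_h^n=c(D_{2t}+\widetilde\varGamma_1A_t^2)\nabla_h^+p_h^n.
\]
Take the edge inner product with $\mathbf g_h^n=\psi_\varepsilon^{n+1}[(I+a^{-1}\varGamma_1)A_t^2\bm\chi_h^n-ca^{-1}\varGamma_1A_t^2\nabla_h^+p_h^n]$. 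The algebra is the pointwise matrix algebra already used, namely $\varGamma_1\widetilde\varGamma_1=bI$ and $a^{-1}\varGamma_1(\varGamma_1+\widetilde\varGamma_1)+\widetilde\varGamma_1=aI$, evaluated at edges. Together with the polarization identity for $A_t^+\bm\lambda_h^n$, this reproduces \eqref{eq:wave-pml-discrete-chi-energy} with $(\cdot,\cdot)_e$. The weight $\psi_\varepsilon^{n+1}$ multiplying the $a^{-1}\varGamma_1$ terms can be dropped because $\psi_\varepsilon=1$ on the PML support. This explains why the energy contains $\|A_t^+\bm\lambda_h^n\|_{a^{-1}\varGamma_1,e}^2$ without the weight.

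\emph{Step 3: cancellation and the discrete product rule.} The coupling terms from Steps 1 and 2 cancel exactly. This is the main point to verify, and it is where the grid enters. The cancellation requires that the edge values of the weight $\psi_\varepsilon^{n+1}$ used in $\mathcal D_h^{\psi}$ coincide with those used in $\mathbf g_h^n$. It also requires that $\nabla_h^+$ commutes with the time operators and with multiplication by the cell-centred coefficient $a$. The first condition holds if $\psi$ is sampled directly at edges. The second is delicate: $\nabla_h^+(aA_t^2p_h)$ is not $a\nabla_h^+A_t^2p_h$ unless $a$ is locally constant. My first attempt would be to split into regions. Where $\psi_\varepsilon\neq1$ we have $a=0$, and where $a\neq0$ the weight is $\psi_\varepsilon=1$. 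The mismatch is then confined to the PML, and I would absorb it by evaluating $a$ in the two products consistently, interpreting $\widetilde\varGamma_1$ and $a$ at edges in the $\bm\chi$-equation. If needed, the assumption that $\xi_1$ depends only on $x$ and $\xi_2$ only on $y$ can be used, so that the diagonal entries commute with the corresponding one-directional differences. Once the cancellation holds, summing gives the analogue of \eqref{eq:wave-pml-discrete-energy-pre}. Applying the nodewise discrete product rule $(w^{n+1},D_t^-r^n)=D_t^-(w^{n+1},r^n)-(D_t^-w^{n+1},r^{n-1})$ to each weighted term, with $w=\psi_\varepsilon^{n+1}$, $\Theta_h^{n+1}$ and $\beta(1-\psi_\varepsilon^{n+1})$, then produces $\mathcal R_{h,{\rm embed}}^n$ and completes \eqref{eq:wave-pml-embedding-energy-fully-discrete}.
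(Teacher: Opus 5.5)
Your Steps 1 and 2 and the closing use of the discrete product rule are exactly the paper's proof. The point you isolate in Step 3 is indeed the crux, but your proposed repair does not work, so the proposal has a genuine gap there.

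After \eqref{eq:sbp-identity}, testing the pressure equation with $A_t^-q_h^n$ yields $-c\bigl(A_t^2\bm{\chi}_h^n,\nabla_h^+(D_{2t}p_h^n+aA_t^2p_h^n)\bigr)_{\psi_\varepsilon^{n+1},e}$, with $a$ at cell centres. Testing the $\bm{\chi}$-equation with $\bm g_h^n$ yields $c\bigl(A_t^2\bm{\chi}_h^n,(D_{2t}+aA_t^2)\nabla_h^+p_h^n\bigr)_{\psi_\varepsilon^{n+1},e}$, with $a=\mathrm{tr}\,\varGamma_1$ at edges. The sum leaves $-c\bigl(A_t^2\bm{\chi}_h^n,\nabla_h^+(au)-a\nabla_h^+u\bigr)_{\psi_\varepsilon^{n+1},e}$ with $u=A_t^2p_h^n$. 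For any choice of edge value $a_{i+\frac12,j}$, its $x$-edge integrand is
\[
\frac{(a_{i+1,j}-a_{i+\frac12,j})\,u_{i+1,j}+(a_{i+\frac12,j}-a_{i,j})\,u_{i,j}}{h_x}.
\]
This vanishes for all $u$ only if $a_{i,j}=a_{i+\frac12,j}=a_{i+1,j}$, so no \emph{consistent} placement of $a$ removes it.

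Separability of the profiles only handles the cross-direction part. With $a=\xi_1(x)+\xi_2(y)$, the $\xi_2$-contributions do cancel on $x$-edges. What remains is driven by the variation of $\xi_1$ in $x$, and that does not commute with $D_x^+$. With the averaged edge value $A_x^+a$, the leftover is exactly $(A_x^+u)(D_x^+a)$, and analogously on $y$-edges. This is the discrete counterpart of the term $-c(\bm{\chi},p\nabla a)$ that appears already in the continuous computation, because $\nabla q=\nabla p_t+a\nabla p+p\nabla a$. For the sine-ramp profiles used in the paper, $D_x^+a\neq 0$ throughout the ramp, so this term is generically nonzero and indefinite. A smaller point: in Step 2 the term to add is $c\varGamma_1A_t^+\nabla_h^+p_h^n$, with $\varGamma_1$ outside the difference; otherwise the same commutator appears there too.

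The paper's own proof does not resolve this either. Right after invoking \eqref{eq:sbp-identity}, it writes the coupling term directly as $-c\bigl(A_t^2\bm{\chi}_h^n,(D_{2t}+aA_t^2)\nabla_h^+p_h^n\bigr)_{\psi_\varepsilon^{n+1},e}$. That is, it commutes $a$ with $\nabla_h^+$ without comment, just as Section~\ref{sec:PML-wave} uses $\nabla(ap)=a\nabla p$ when deriving the $\bm{\chi}$-equation. So you have reproduced the paper's argument together with its unjustified step; the only difference is that you noticed it.

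What the argument honestly proves is one of the following:
\begin{itemize}
\item \eqref{eq:wave-pml-embedding-energy-fully-discrete} under the extra hypothesis that $a$ is constant across every edge on which $\bm{\chi}_h$ is tested, e.g.\ constant damping in the layer;
\item for general profiles, \eqref{eq:wave-pml-embedding-energy-fully-discrete} with the additional term $-c\bigl(A_t^2\bm{\chi}_h^n,\nabla_h^+(aA_t^2p_h^n)-a\nabla_h^+A_t^2p_h^n\bigr)_{\psi_\varepsilon^{n+1},e}$ on the right-hand side.
\end{itemize}
You should state one of these rather than claim the mismatch can be absorbed.
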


\begin{proof}
	Rewriting the first equation of
	\eqref{eq:wave-pml-embedding-fully-discrete} by means of
	$q_h^n=D_t^+p_h^n+aA_t^+p_h^n$ and
	$\bm{\chi}_h^n=c\nabla_h^+p_h^n+\bm{\lambda}_h^n$ gives
	\begin{equation}\label{eq:wave-pml-discrete-p-rewrite-h}
		\psi_\varepsilon^{n+1}D_t^-q_h^n
		+ \Theta_h^{n+1}A_t^2p_h^n
		+ (1-\psi_\varepsilon^{n+1})(\alpha D_{2t}p_h^n + \beta p_h^{n+1})
		= c\mathcal D_h^{\psi_\varepsilon^{n+1}}(A_t^2\bm{\chi}_h^n).
	\end{equation}
	Multiplying \eqref{eq:wave-pml-discrete-p-rewrite-h} by
	$A_t^-q_h^n=D_{2t}p_h^n+aA_t^2p_h^n$ and using
	\eqref{eq:sbp-identity}, we obtain
	\begin{equation}\label{eq:wave-pml-discrete-p-energy-h}
		\begin{aligned}
			 & \frac12\bigl(\psi_\varepsilon^{n+1},D_t^-|q_h^n|^2\bigr)_{h}
			+ \frac12\bigl(\Theta_h^{n+1},D_t^-|A_t^+p_h^n|^2\bigr)_{h}        \\
			 & \qquad +
			\frac12\bigl(\beta(1-\psi_\varepsilon^{n+1}),D_t^-A_t^+|p_h^n|^2\bigr)_{h}
			+ \|A_t^2p_h^n\|_{ab,h}^2 +
			\|D_{2t}p_h^n\|_{(\alpha+\tau\beta)(1-\psi_\varepsilon^{n+1}),h}^2 \\
			 & \qquad = -c\bigl(A_t^2\bm{\chi}_h^n,
			(D_{2t}+a
			A_t^2)\nabla_h^+p_h^n\bigr)_{\psi_\varepsilon^{n+1},e},
		\end{aligned}
	\end{equation}
	where we used
	$p_h^{n+1}D_{2t}p_h^n=
		\frac12D_t^-A_t^+|p_h^n|^2+\tau|D_{2t}p_h^n|^2$.

	Next, combining the second equation of
	\eqref{eq:wave-pml-embedding-fully-discrete} with the definition of
	$\bm{\chi}_h^n$ yields
	\begin{equation}\label{eq:wave-pml-discrete-chi-rewrite-h}
		D_t^-A_t^+\bm{\chi}_h^n + \varGamma_{1} A_t^2\bm{\chi}_h^n
		= c\bigl(D_{2t}+\widetilde{\varGamma}_{1} A_t^2\bigr)
		\nabla_h^+p_h^n.
	\end{equation}
	Multiplying \eqref{eq:wave-pml-discrete-chi-rewrite-h} by
	\[
		\bm g_h^n:=\psi_\varepsilon^{n+1}\Bigl[
			\bigl(I+a^{-1}\varGamma_1\bigr)A_t^2\bm{\chi}_h^n
			- c a^{-1}\varGamma_{1} A_t^2\nabla_h^+p_h^n
			\Bigr],
	\]
	and using
	$\varGamma_1\widetilde{\varGamma}_1=b I$
	together with
	$a^{-1}\varGamma_{1}
		(\varGamma_{1}+\widetilde{\varGamma}_{1})
		+\widetilde{\varGamma}_{1}=a I$,
	we have
	\begin{equation}\label{eq:wave-pml-discrete-chi-energy-pre-h}
		\begin{aligned}
			 & \frac12\Bigl(\psi_\varepsilon^{n+1},
			D_t^-\bigl[|A_t^+\bm{\chi}_h^n|^2
				     +(A_t^+\bm{\chi}_h^n)^\top
				     a^{-1}\varGamma_1 A_t^+\bm{\chi}_h^n +
				     c^2(A_t^+\nabla_h^+p_h^n)^\top
				     a^{-1}\varGamma_1 A_t^+\nabla_h^+p_h^n
				     \bigr]\Bigr)_{e} \\
			 & \qquad +
			\|A_t^2\bm{\chi}_h^n\|_{\varGamma_1
				                      (I+a^{-1}\varGamma_1),e}^2 +
			c^2\|A_t^2\nabla_h^+p_h^n\|_{a^{-1}b,e}^2
			\\
			 & \qquad = c\Bigl(\psi_\varepsilon^{n+1},
			D_t^-\bigl[(A_t^+\bm{\chi}_h^n)^\top
				     a^{-1}\varGamma_{1}
				     A_t^+\nabla_h^+p_h^n\bigr]\Bigr)_{e} + c\bigl(A_t^2\bm{\chi}_h^n,
			(D_{2t}+aA_t^2)\nabla_h^+p_h^n\bigr)_{\psi_\varepsilon^{n+1},e}.
		\end{aligned}
	\end{equation}
	Since
	\[
		A_t^+\bm{\lambda}_h^n=A_t^+\bm{\chi}_h^n-cA_t^+\nabla_h^+p_h^n,
	\]
	the quadratic identity
	\[
		(A_t^+\bm{\lambda}_h^n)^\top
		a^{-1}\varGamma_1 A_t^+\bm{\lambda}_h^n
		= (A_t^+\bm{\chi}_h^n)^\top
		a^{-1}\varGamma_{1} A_t^+\bm{\chi}_h^n
		+ c^2(A_t^+\nabla_h^+p_h^n)^\top
		a^{-1}\varGamma_{1} A_t^+\nabla_h^+p_h^n
		- 2c(A_t^+\bm{\chi}_h^n)^\top
		a^{-1}\varGamma_{1} A_t^+\nabla_h^+p_h^n
	\]
	reduces \eqref{eq:wave-pml-discrete-chi-energy-pre-h} to
	\begin{equation}\label{eq:wave-pml-discrete-chi-energy-h}
		\begin{aligned}
			 & \frac12\Bigl(\psi_\varepsilon^{n+1},
			D_t^-\bigl[|A_t^+\bm{\chi}_h^n|^2
				     +(A_t^+\bm{\lambda}_h^n)^\top
				     a^{-1}\varGamma_{1}
				     A_t^+\bm{\lambda}_h^n\bigr]\Bigr)_e \\
			 & \qquad +
			\|A_t^2\bm{\chi}_h^n\|_{\varGamma_{1}(I+a^{-1}\varGamma_{1}),e}^2
			+ c^2\|A_t^2\nabla_h^+p_h^n\|_{a^{-1}b,e}^2
			\\
			 & \qquad = c\bigl(A_t^2\bm{\chi}_h^n,
			(D_{2t}+a
			A_t^2)\nabla_h^+p_h^n\bigr)_{\psi_\varepsilon^{n+1},e}.
		\end{aligned}
	\end{equation}
	Adding \eqref{eq:wave-pml-discrete-p-energy-h} and
	\eqref{eq:wave-pml-discrete-chi-energy-h}, we arrive at
	\begin{equation*}
		\begin{aligned}
			 & \frac12\Bigl(\psi_\varepsilon^{n+1},
			          D_t^-\bigl[|q_h^n|^2\bigr]\Bigr)_{h}
			+ \frac12\Bigl(\psi_\varepsilon^{n+1},
			D_t^-\bigl[|A_t^+\bm{\chi}_h^n|^2
				     +(A_t^+\bm{\lambda}_h^n)^\top
				     a^{-1}\varGamma_1
				     A_t^+\bm{\lambda}_h^n\bigr]\Bigr)_e                           \\
			 & \qquad + \frac12\bigl(\Theta_h^{n+1},D_t^-|A_t^+p_h^n|^2\bigr)_h
			+
			\frac12\bigl(\beta(1-\psi_\varepsilon^{n+1}),D_t^-A_t^+|p_h^n|^2\bigr)_h
			= -\mathcal D_{h,{\rm embed}}^n.
		\end{aligned}
	\end{equation*}
	Finally, applying the discrete product rule
	\[
		(w^{n+1},D_t^-r^n)_\star
		= D_t^-(w^{n+1},r^n)_\star - (D_t^-w^{n+1},r^{n-1})_\star,
	\]
	both for the cell inner product and for the edge inner product,
	we convert the preceding relation into
	\eqref{eq:wave-pml-embedding-energy-fully-discrete}. This proves the
	fully discrete energy law.
\end{proof}

The adaptive computation in space is carried out on a nested hierarchy of
Cartesian domains $\{\Omega_\ell^n\}_{\ell=0}^{L^n}$ with
$\Omega_0^n=\Omega$, where level $\ell+1$ is obtained by refining
selected cells of level $\ell$. If $r_\ell$ denotes the refinement
ratio between levels $\ell$ and $\ell+1$, then the corresponding mesh
sizes are
\[
	h_{x,\ell}=\frac{h_x}{\prod_{m=0}^{\ell-1}r_m},
	\qquad
	h_{y,\ell}=\frac{h_y}{\prod_{m=0}^{\ell-1}r_m}.
\]
The composite approximation always takes the finest value at each
spatial location. On every active patch of level $\ell$, the same local
staggered scheme \eqref{eq:wave-pml-embedding-fully-discrete} is used,
now with the level-dependent operators
$\nabla_{h,\ell}^+$, $\mathcal D_{h,\ell}^{\omega}$, and
$\mathcal L_{h,\ell}^{\omega}$.

The adaptive computation is carried out on a standard block-structured AMR hierarchy \cite{BergerColella1989} with the mesh hierarchy managed by AMReX \cite{Zhang2019}. In the present implementation, all refinement levels use the same time step.

Refinement is driven by three indicators associated with the moving embedding field $\psi_\varepsilon$, the fixed PML mask, and the current pressure solution. After mild smoothing, a cell is tagged whenever at least one of these indicators exceeds its threshold. The tagged cells are then buffered and clustered into a properly nested hierarchy. Coarse-to-fine interpolation is used to fill ghost cells at coarse-fine interfaces; when regridding creates new fine patches, it is also used to initialize the data on those patches. The new time-level solution on the resulting hierarchy is then obtained by solving the discrete equations.

We summarize the adaptive algorithm in the following pseudocode.

\begin{center}
	\fbox{
		\begin{minipage}{0.95\textwidth}
			\footnotesize
			\noindent\textbf{Algorithm 1. Adaptive update
				for the fully-discrete PML-DE system.}

			\medskip
			\noindent\textbf{Input:} composite hierarchy
			$\{\Omega_\ell^n\}_{\ell=0}^{L^n}$, cell-centered states
			$\{p_{h,\ell}^{n-1},p_{h,\ell}^{n}\}_{\ell=0}^{L^n}$, edge-centered
			states
			$\{\lambda_{1,h,\ell}^{n},\lambda_{2,h,\ell}^{n}\}_{\ell=0}^{L^n}$,
			refinement ratios $\{r_\ell\}$, and a prescribed regridding set
			$\mathcal N_{\rm re}\subset\mathbb N$.

			\medskip
			\noindent\textbf{for} $n=1,2,\ldots$ \textbf{do}
			\begin{enumerate}
				\item If $n\in\mathcal N_{\rm re}$, then for each active level $\ell$
				      compute the three sensors
				      $\eta_{{\rm emb},\ell}^n$, $\eta_{{\rm pml},\ell}$, and
				      $\eta_{{\rm sol},\ell}^n$ from $\psi_{\varepsilon,\ell}^n$,
				      the PML mask, and
				      $p_{h,\ell}^n$, respectively; smooth them by a small number of
				      nearest-neighbor sweeps; and tag a cell $K\subset\Omega_\ell^n$
				      whenever
				      \[
					      \max\left\{
					      \frac{\eta_{{\rm emb},\ell}^n(K)}{\tau_{\rm emb}},
					      \frac{\eta_{{\rm pml},\ell}(K)}{\tau_{\rm pml}},
					      \frac{\eta_{{\rm sol},\ell}^n(K)}{\tau_{\rm sol}}
					      \right\}>1.
				      \]
				      Enlarge the tagged set by a fixed buffer and cluster it into a
				      properly nested patch hierarchy.

				\item Persistent fine
				      cells keep their old values. For every newly created fine cell,
				      prolong $p_{h,\ell}^n$ and $p_{h,\ell}^{n-1}$ bilinearly from the
				      parent coarse cell, and prolong the two components
				      $\lambda_{1,h,\ell}^n$ and $\lambda_{2,h,\ell}^n$ componentwise onto
				      the corresponding horizontal and vertical edge grids of level
				      $\ell+1$.

				\item For each active
				      level $\ell$, fill same-level ghost cells by copying from adjacent
				      patches; fill coarse-fine ghost cells for the cell-centered pressure
				      and the edge-centered auxiliary components by the same prolongation
				      rules; and then apply the homogeneous outer-boundary closure.

				\item For
				      $\ell=0,1,\ldots,L^n$ and every active patch $\mathcal
					      P_{\ell,m}^n$,
				      solve on its valid cells the level-dependent Crank--Nicolson system
				      \[
					      \left\lbrace
					      \begin{aligned}
						       & \psi_{\varepsilon}^{n+1} D_t^2 p_{h,\ell}^n
						      + a \psi_{\varepsilon}^{n+1} D_{2t} p_{h,\ell}^n
						      + b \psi_{\varepsilon}^{n+1} A_t^2 p_{h,\ell}^n
						      + \tfrac{W_{\varepsilon}^{n+1}}{\eta_d}A_t^2 p_{h,\ell}^n \\
						       & \qquad + (1-\psi_{\varepsilon}^{n+1})
						      \bigl(\alpha D_{2t}p_{h,\ell}^n+\beta p_{h,\ell}^{n+1}\bigr)
						      = c^2\mathcal
						      L_{h,\ell}^{\psi_{\varepsilon}^{n+1}}(A_t^2p_{h,\ell}^n)
						      + c\mathcal
						      D_{h,\ell}^{\psi_{\varepsilon}^{n+1}}(A_t^2\bm{\lambda}_{h,\ell}^n),
						      \\
						       & D_t^+\bm{\lambda}_{h,\ell}^n
						      + \varGamma_{1} A_t^+\bm{\lambda}_{h,\ell}^n
						      = c\varGamma_{2}
						      A_t^+\nabla_{h,\ell}^+p_{h,\ell}^n.
					      \end{aligned}
					      \right.
				      \]
				      This yields $p_{h,\ell}^{n+1}$ on cell centers and
				      $(\lambda_{1,h,\ell}^{n+1},\lambda_{2,h,\ell}^{n+1})$ on the two
				      edge grids of the patch.

				\item For
				      $\ell=L^n-1,L^n-2,\ldots,0$, replace every covered coarse cell value
				      of $p_{h,\ell}^{n+1}$ by the cell average of the fine solution, and
				      replace the covered coarse-edge values of
				      $\lambda_{1,h,\ell}^{n+1}$ and $\lambda_{2,h,\ell}^{n+1}$ by the
				      corresponding edge averages of the fine solution.

				\item For every active level $\ell$, set
				      \[
					      p_{h,\ell}^{n-1}\leftarrow p_{h,\ell}^{n},
					      \quad
					      p_{h,\ell}^{n}\leftarrow p_{h,\ell}^{n+1},
					      \quad
					      \bm{\lambda}_{h,\ell}^{n}\leftarrow \bm{\lambda}_{h,\ell}^{n+1}.
				      \]
			\end{enumerate}
			\noindent\textbf{end for}
		\end{minipage}}
\end{center}

Algorithm~1 follows the same order as the implementation. Step~1 changes the
hierarchy only when regridding is scheduled. Steps~2 and~3 initialize newly
created fine cells and fill ghost values. Step~4 advances the discrete PML-DE
system on the resulting hierarchy. Steps~5 and~6 average fine data back to
covered coarse regions and rotate the stored time levels. Thus, away from the
regridding steps $n\in\mathcal N_{\rm re}$, the algorithm reduces to ghost
filling, the level solve, averaging-down, and the time update on a fixed grid
layout.

\section{Numerical results}\label{sec:numerical}

\noindent \indent In this section, we conduct a sequence of numerical experiments for
both fixed and moving objects. The first example serves as a static
benchmark and illustrates the basic wave propagation, scattering, and PML
absorption properties of the sound-soft {\PMLDES}. The
second example compares solutions obtained from the present embedded finite-difference solver
with those calculated using a sharp-interface finite-element method for a circular
scatterer under both sound-soft and sound-hard boundary conditions.
The remaining numerical experiments concern moving circular, star-shaped, and ship-shaped objects
and are carried out for two incident-wave parameter sets representing
moderate- and high-frequency regimes, respectively. These experiments are used to
assess the quality of the computed, scattered field, the behavior of the weighted
energy, and the effect of the moving embedding on the fully discrete
energy law. {All moving-object experiments below use prescribed uniform rectilinear motion. Consequently, for the sound-hard cases the normal acceleration term $a_{n_\star}(t)$ in the embedded model is identically zero.}
\begin{exa}
	We consider wave scattering generated by a nonzero initial pressure
	disturbance in the presence of a fixed object. In this example the
	source term vanishes. We set $\Omega_{\rm phy} = (-10,10)^2$ and
	$\Omega = (-14,14)^2$, take wave speed $c=1$, and use time
	step size $\tau = 10^{-2}$. The initial data are
	\begin{equation*}
		p(\mathbf{x}, 0) = e^{-5[(x-5)^2 + y^2]}, \quad \partial_t
		p(\mathbf{x}, 0) = 0.
	\end{equation*}
	The object is a fixed circle centered at the origin with radius
	$R=2$.
\end{exa}
\begin{figure}
	\begin{center}
		\includegraphics[width=0.24\textwidth]{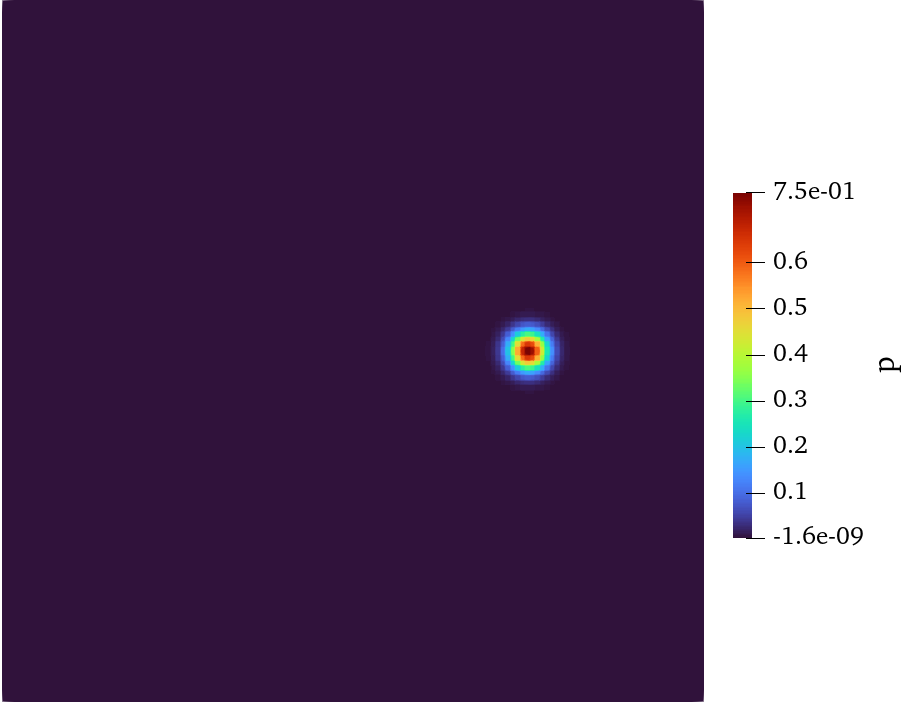}
		\includegraphics[width=0.24\textwidth]{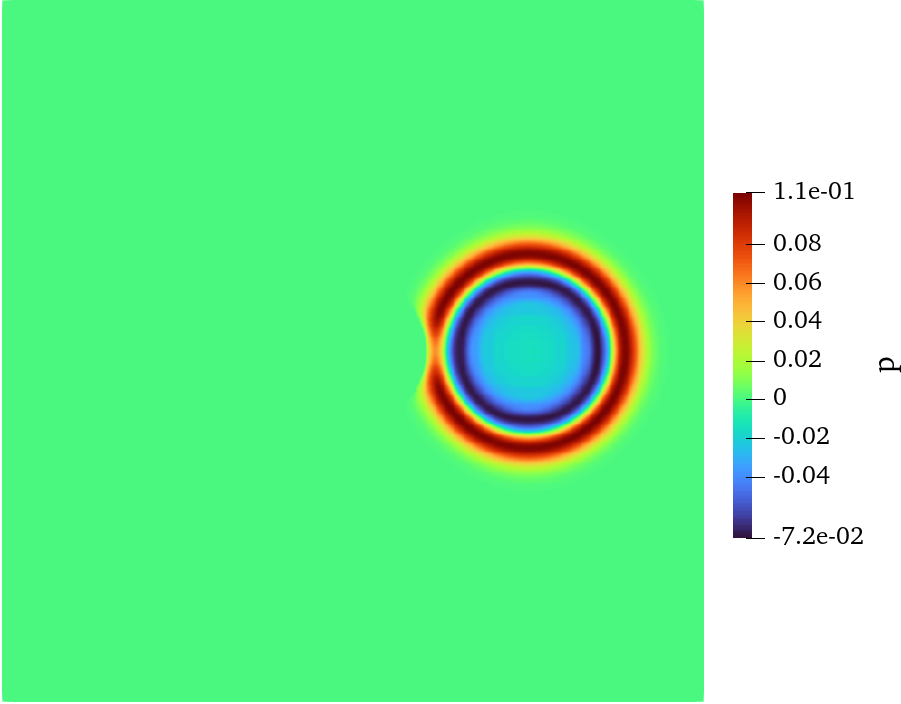}
		\includegraphics[width=0.24\textwidth]{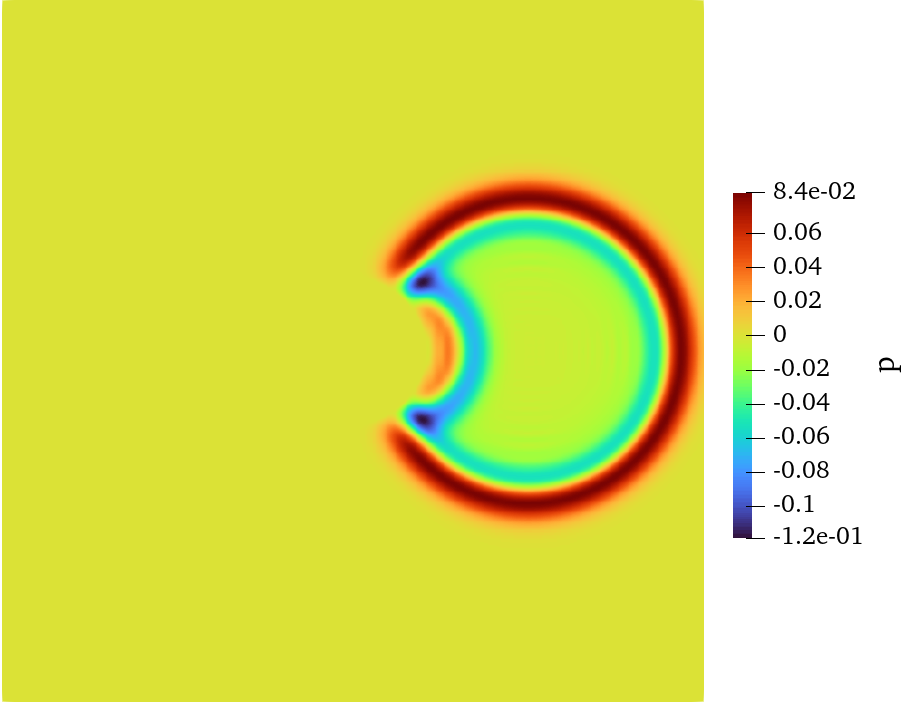}
		\includegraphics[width=0.24\textwidth]{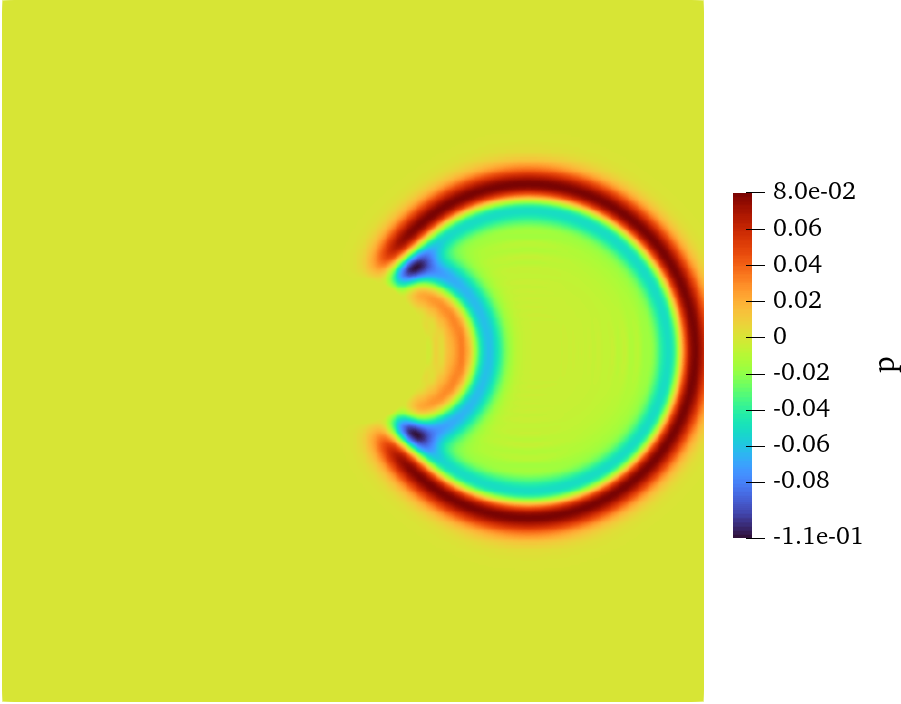}
		\includegraphics[width=0.24\textwidth]{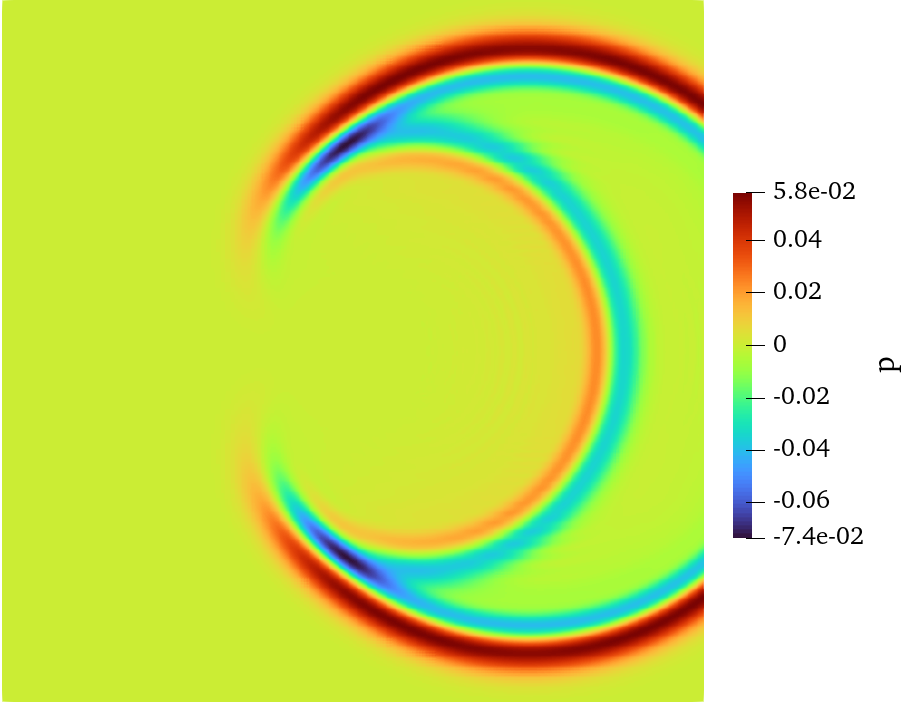}
		\includegraphics[width=0.24\textwidth]{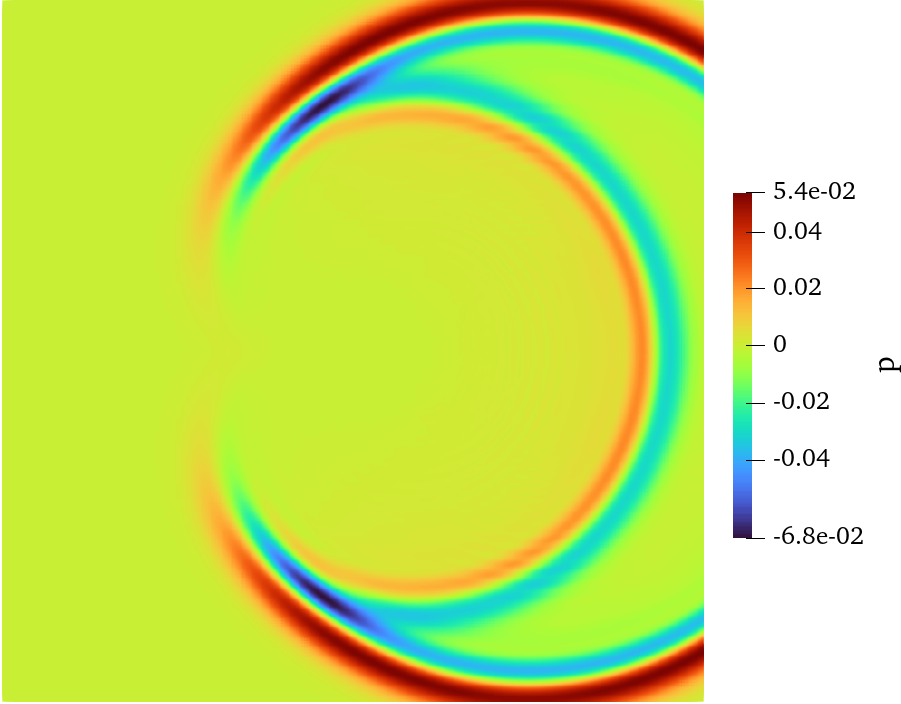}
		\includegraphics[width=0.24\textwidth]{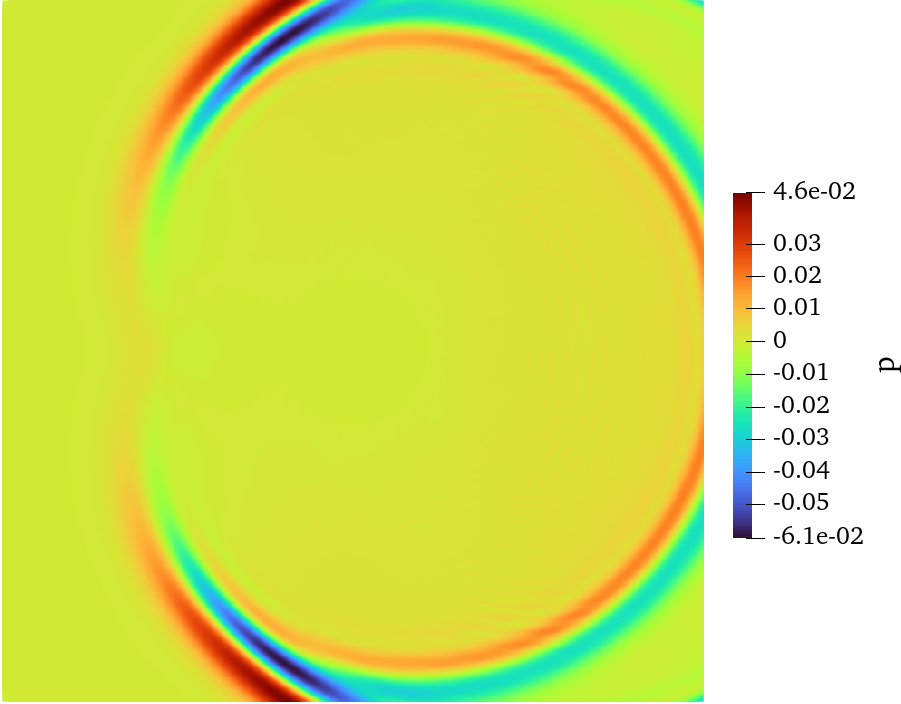}
		\includegraphics[width=0.24\textwidth]{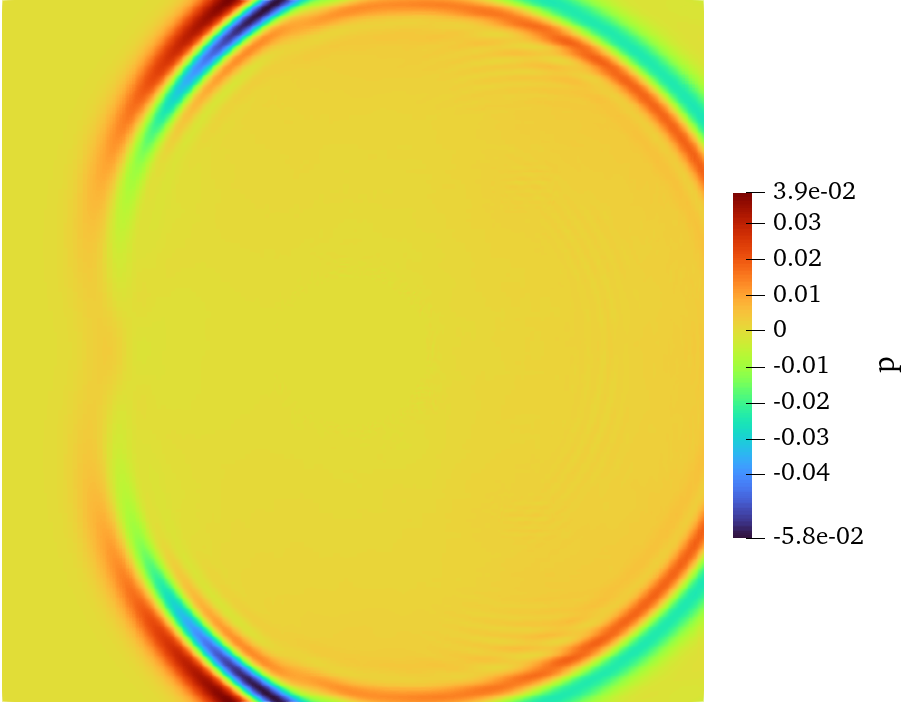}
	\end{center}
	\caption{Snapshots of the pressure field at times $t =
			0.1, 2.6,4.2, 4.6, 8.5, 9.8, 12, 13$ (from left to right, top to
		bottom), respectively,  in $\Omega_{phy}$ for the
		fixed-object test. The outgoing wave is absorbed by the outer PML very well
		while the embedded object remains sharply resolved on the fixed computational
		domain.}\label{fig:nosource}
\end{figure}

\begin{figure}
	\begin{center}
		\includegraphics[width=0.245\textwidth]{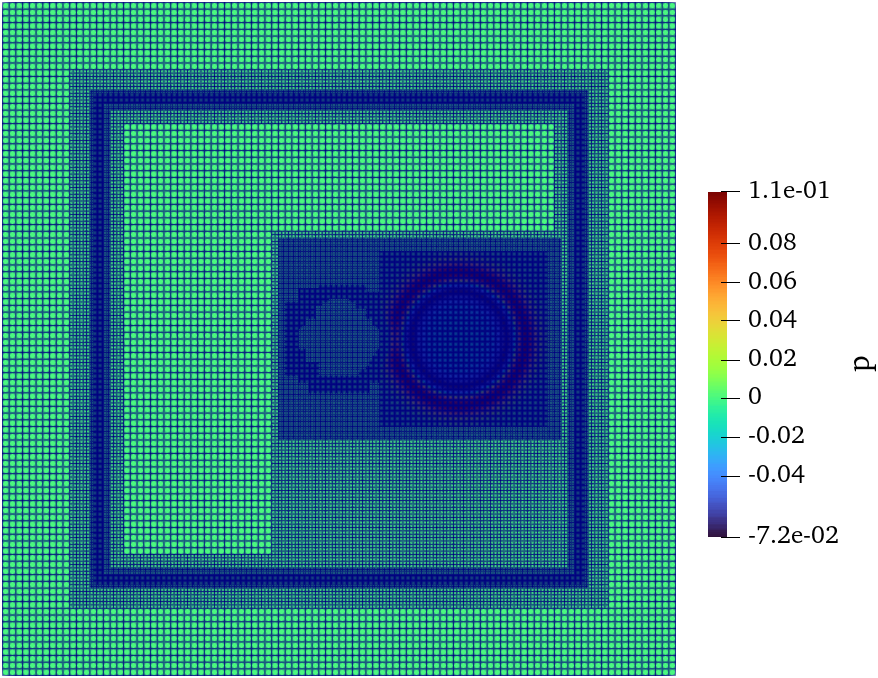}
		\includegraphics[width=0.245\textwidth]{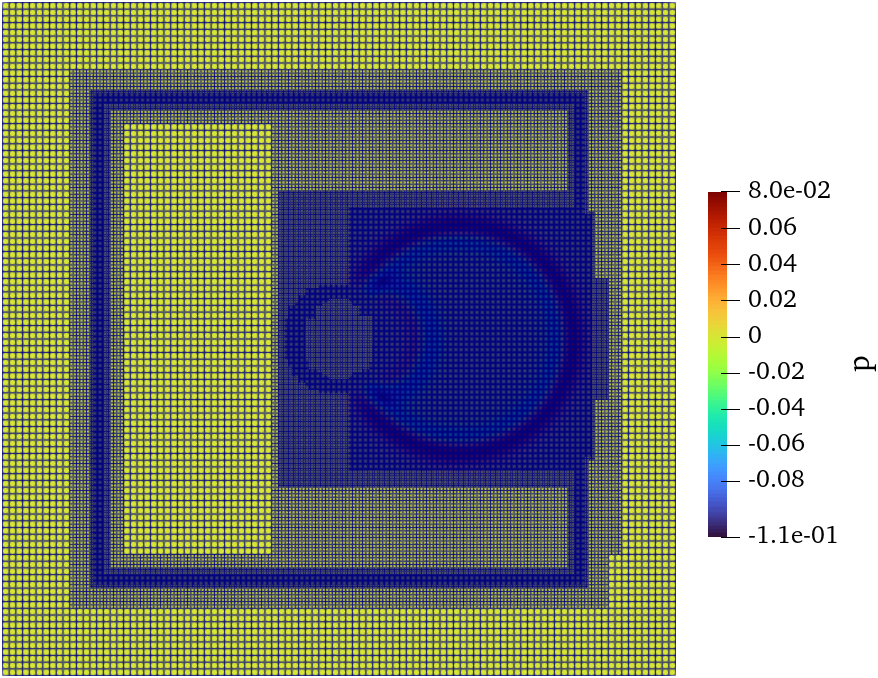}
		\includegraphics[width=0.245\textwidth]{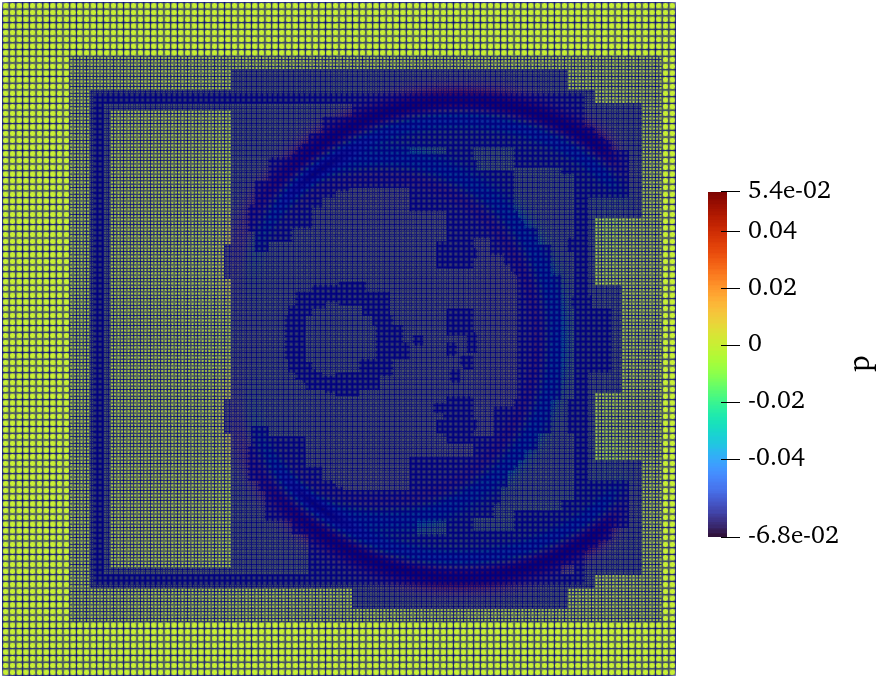}
		\includegraphics[width=0.245\textwidth]{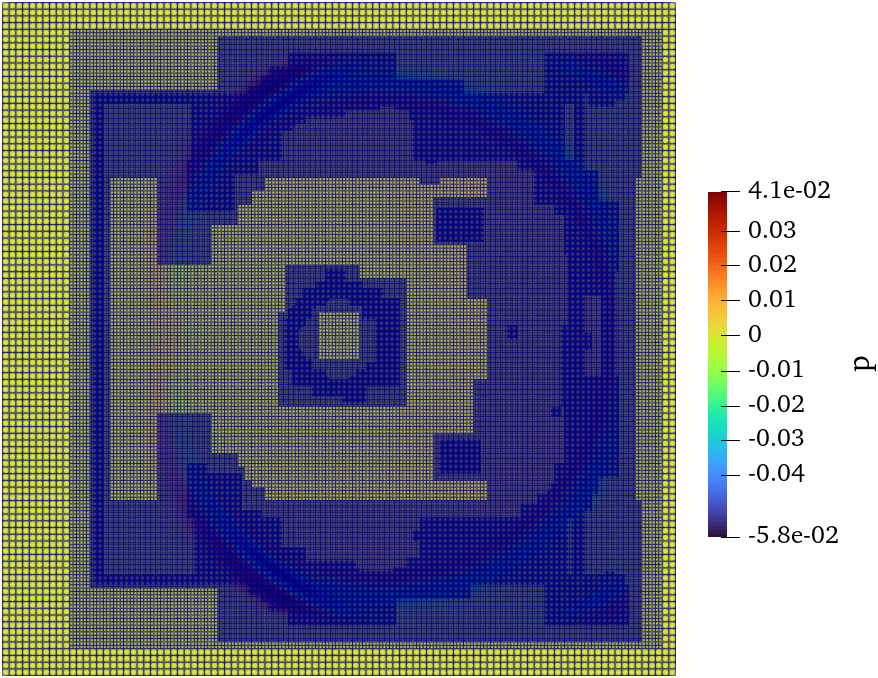}
	\end{center}
	\caption{Adaptive grid distribution at various time during the computation. Pressure profiles on the adaptive grid at selected times in
		$\Omega$. The solution remains smooth across the fixed-grid
		embedding region and decays in the absorbing layer.}\label{fig:adaptive}
\end{figure}

\begin{figure}
	\begin{center}
		\includegraphics[width=0.6\textwidth]{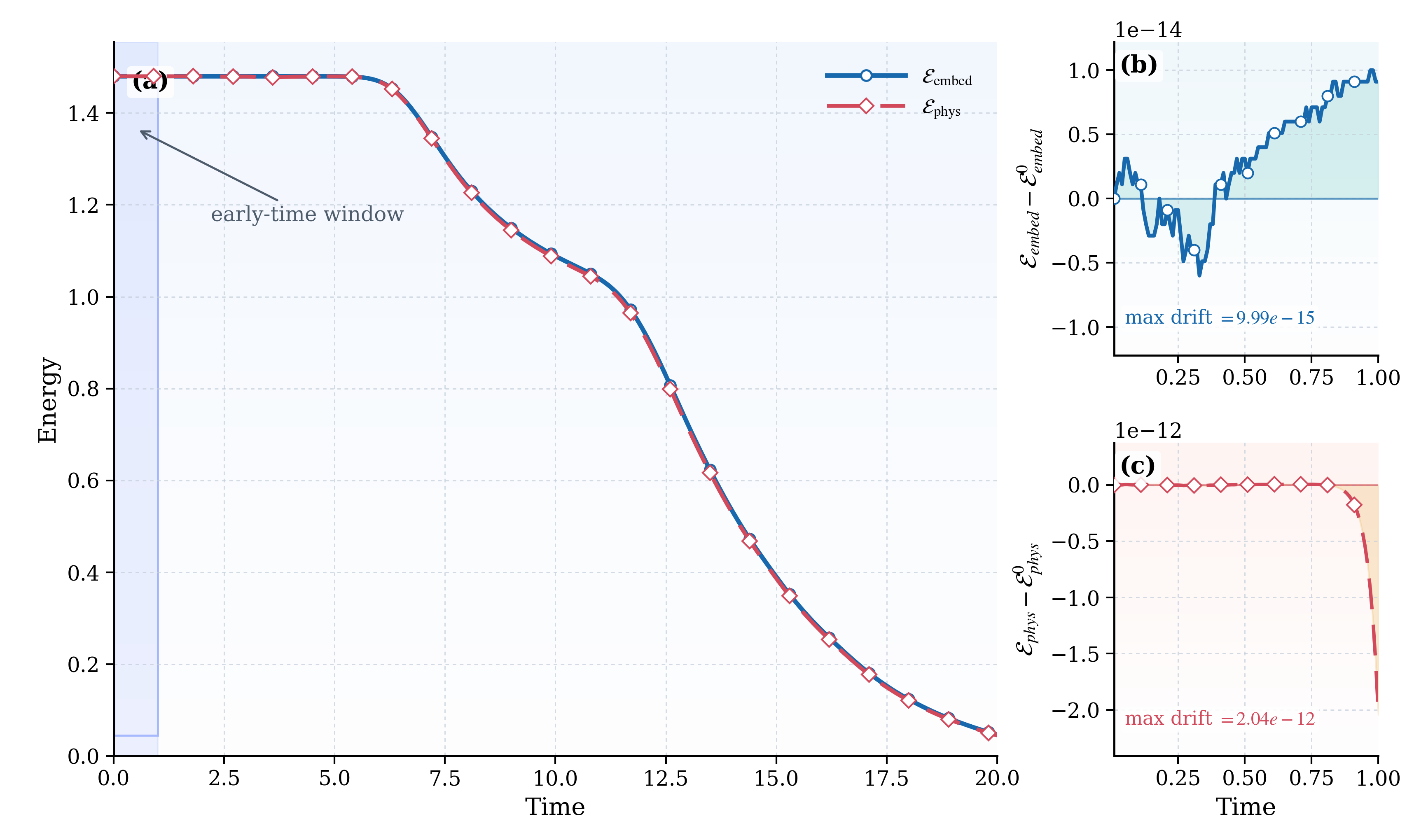}
	\end{center}
	\caption{Energy evolution in Example~4.1. Panel (a) compares the
		weighted energy $\mathcal{E}_{\rm embed}$ of the embedded method and the "ground truth" physical
		energy $\mathcal{E}_{\rm phys}$ over the full simulation interval;
		the shaded region marks the early-time window before the wave
		reaches the object or the PML. Panels (b) and (c) display the
		corresponding early-time drifts, confirming near machine-precision
		conservation prior to the onset of dissipation.}\label{fig:energy-static}
\end{figure}

Figure~\ref{fig:nosource} records eight representative snapshots of the
pressure field. Initially, the Gaussian pulse expands
nearly radially from $\mathbf{x}=(5,0)$. Once the incident front reaches
the circular object, the wave splits into reflected and diffracted
components, and a clear shadow region becomes visible behind the
object. At later times the scattered field propagates toward the
outer collar, where it is absorbed smoothly by the PML. Even in the
last two snapshots, no visible spurious wave returns from the boundary
of $\Omega_{\rm PML}$, which confirms that the absorbing layer remains
effective after coupling with the embedded-object formulation.

These results are consistent with the qualitative behavior reported in
earlier studies of the same benchmark configuration \cite{Gu2026}. In particular,
the fixed-grid embedding captures the object boundary sharply enough
to reproduce the expected reflection pattern, while the PML eliminates
the artificial reflections that were visible in our previous work with
a truncated radiation boundary condition \cite{Gu2026}.

Figure~\ref{fig:adaptive} shows the same computation on the adaptive
Cartesian grid. The refinement follows the diffuse object interface,
the dominant wave fronts, and the PML region, whereas the remainder of
the computational domain stays relatively coarse. This distribution is consistent with
the local regularity of the solution and substantially improves the
computational efficiency compared with a uniform discretization.

Figure~\ref{fig:energy-static} complements the field plots by showing the
corresponding energy evolution for the fixed-object test. Over the full
time interval, the weighted energy computed using the embedded  model and the "ground truth" physical energy are
nearly indistinguishable. In the shaded early-time window, before the
expanding pulse reaches either the object or the PML collar, both
quantities remain essentially constant; the drifts shown in panels (b)
and (c) are only of the order $10^{-14}$--$10^{-12}$. Once scattering
and absorption become active, the two energies decay altogether, which is
consistent with the dissipative structure of the static \PMLDE{}
formulation.

\begin{exa}
	We compare the numerical errors in wave scattering by a circular object
	under both sound-soft and sound-hard boundary conditions. The
	problem is solved using two approaches: a finite difference method
	in a domain-embedding framework, and a finite element method based
	on a classical sharp-interface formulation. In this example, we set
	$\Omega_{\rm phy} = (-5, 5)^2$, $\Omega = (-7, 7)^2$, $c = 10$,
	$\eta = 0.25$, $\mathbf{x}_0 = (-3, 0)$, $\sigma = \frac{2}{25}$,
	$t_0 = 0$, and $w = 10\pi$. For the sound-soft case,
	$\varepsilon = 0.01$, while for the sound-hard case, $\varepsilon = 0.05$.
\end{exa}

\begin{figure}
	\begin{center}
		\includegraphics[width=0.24\textwidth]{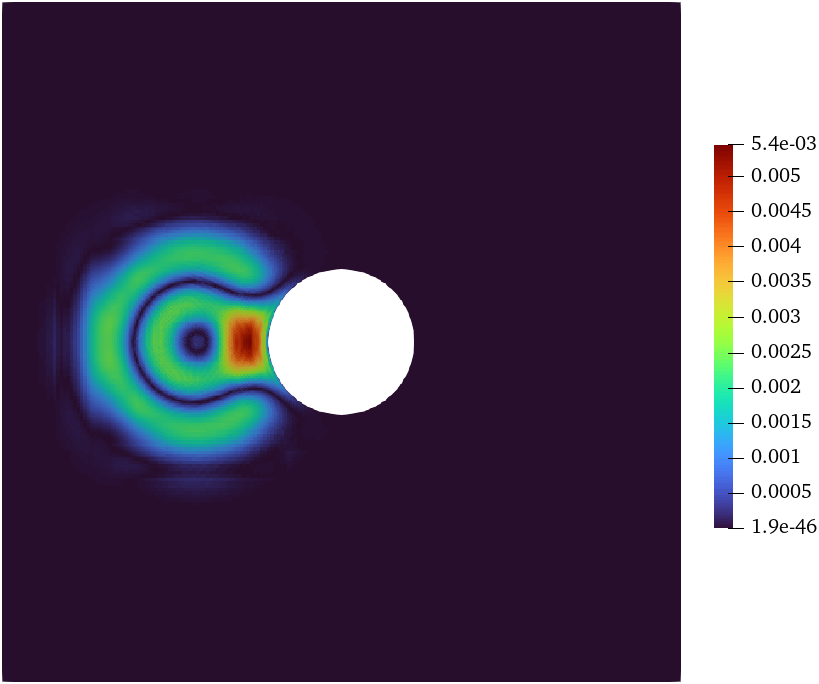}
		\includegraphics[width=0.24\textwidth]{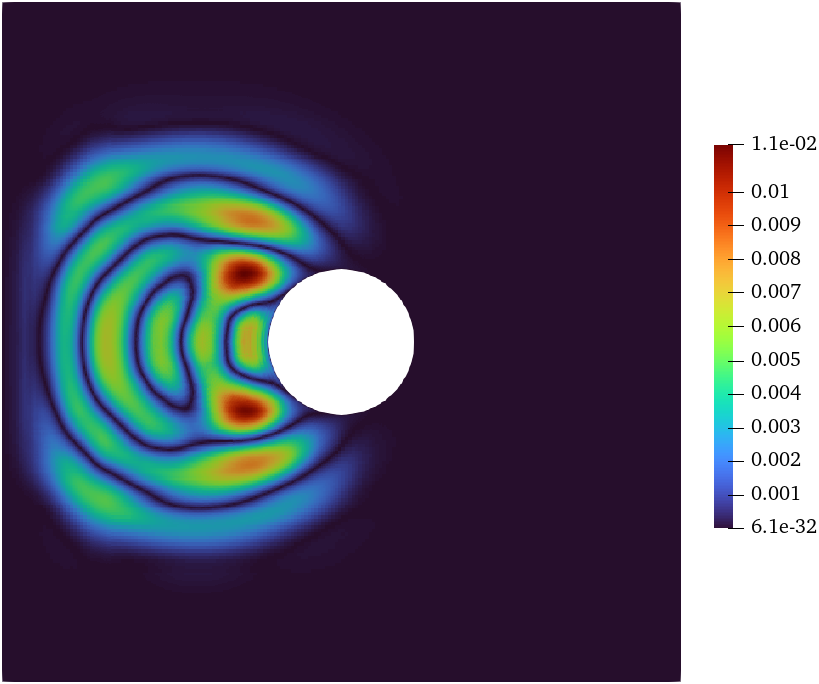}
		\includegraphics[width=0.24\textwidth]{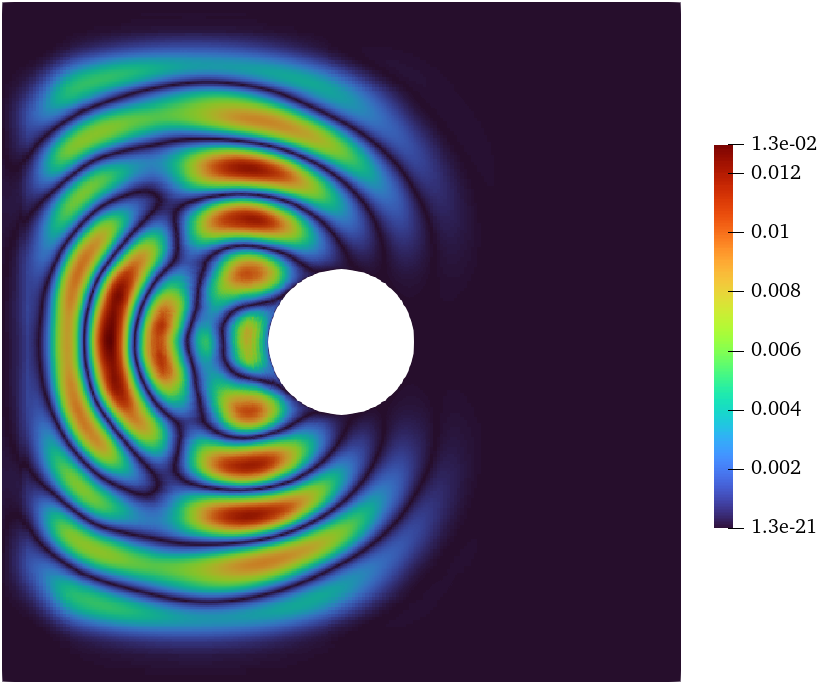}
		\includegraphics[width=0.24\textwidth]{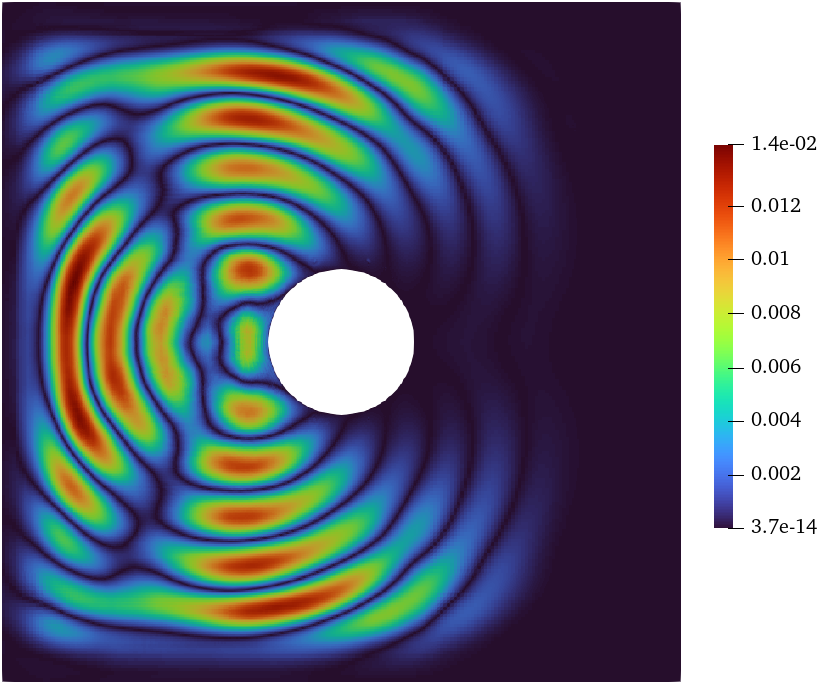}
		\includegraphics[width=0.24\textwidth]{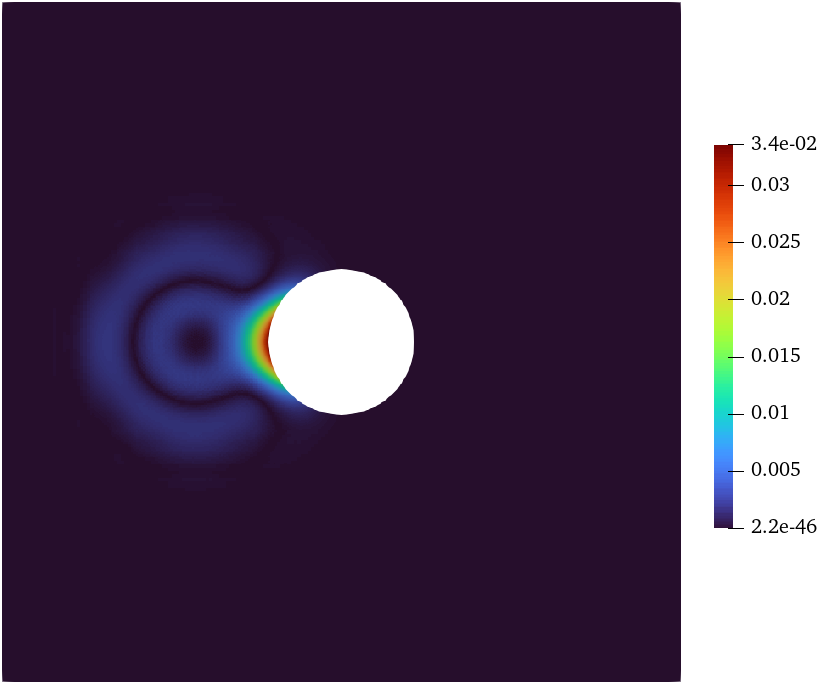}
		\includegraphics[width=0.24\textwidth]{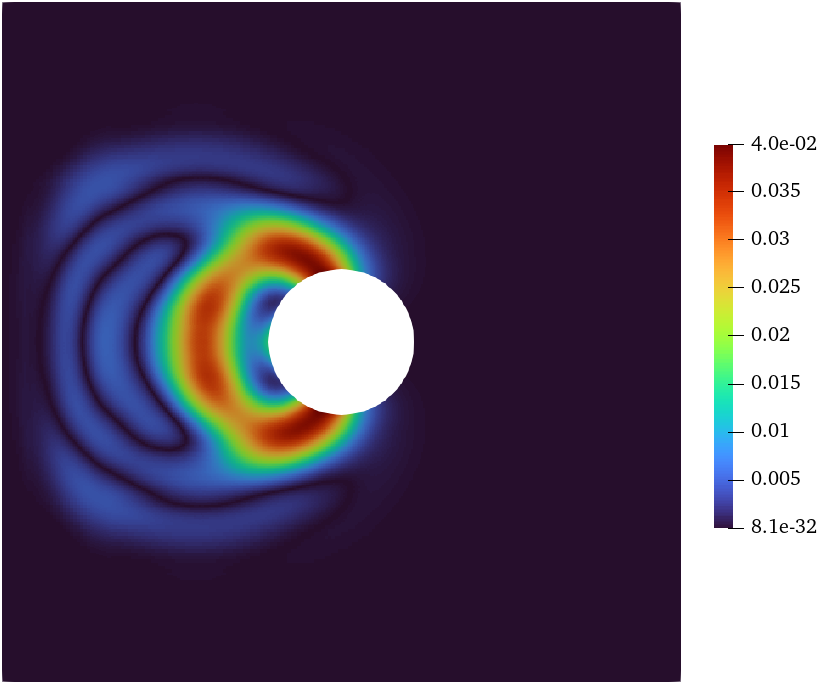}
		\includegraphics[width=0.24\textwidth]{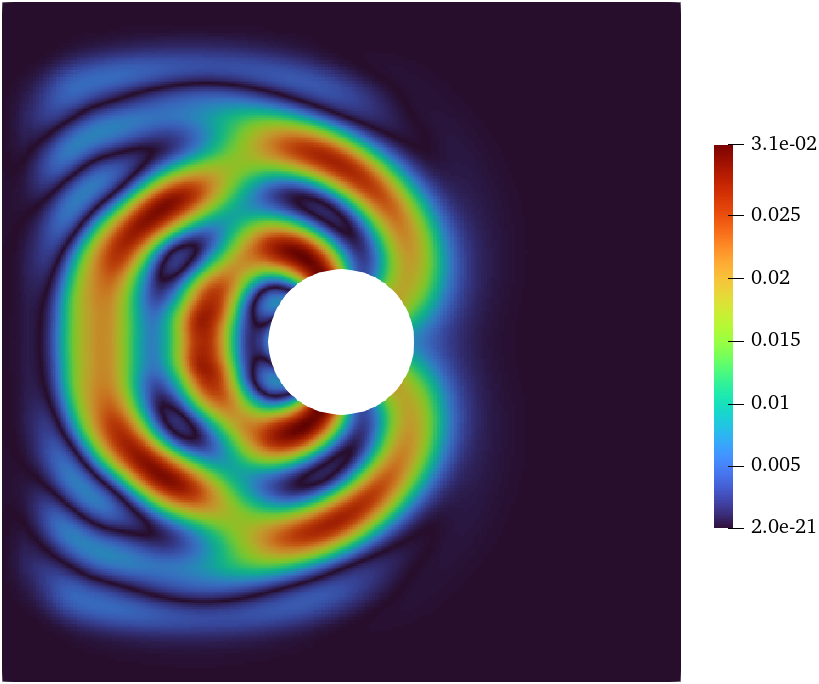}
		\includegraphics[width=0.24\textwidth]{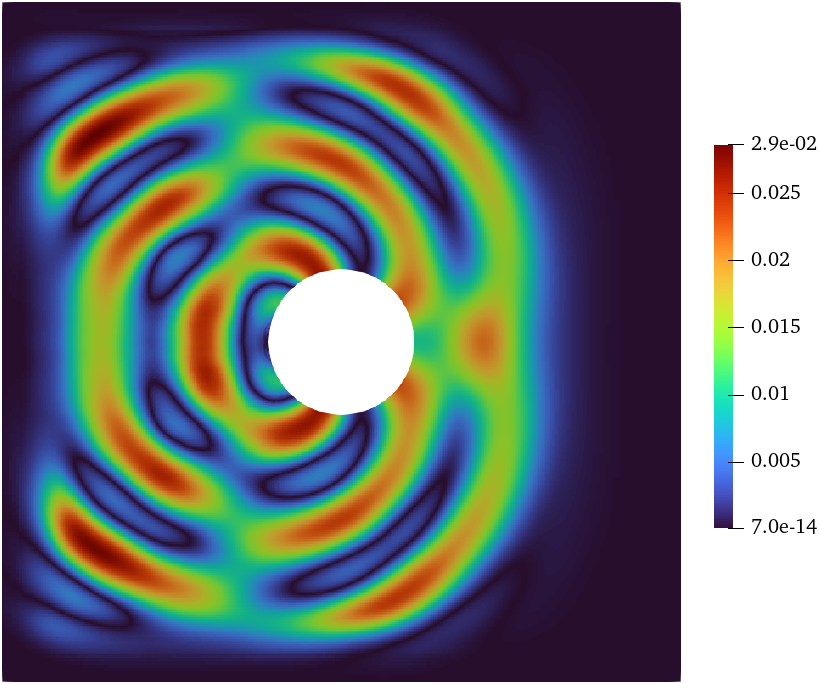}
	\end{center}
	\caption{The error density between the present embedded
		finite-difference solution and a sharp-interface finite-element ground truth
		reference for the circular-object benchmark at $t=0.2, 0.4, 0.6,$
		and $0.8$ (from left to right), respectively. Top row: sound-soft case. Bottom
		row: sound-hard case.}\label{fig:error-benchmark}
\end{figure}

Figure~\ref{fig:error-benchmark} shows the deviation or error between the
embedded finite-difference solution and a sharp-interface finite-element
reference solution. The reference solution is computed on a finite-element mesh with mesh size $h=0.01$. To evaluate the error, the
finite-element reference solution is interpolated onto the Cartesian grid
used by the embedded scheme, and the difference between the two solutions
is then plotted at the same output time.

The error remains strongly localized near the object boundary and along
the dominant outgoing and scattered wave fronts. In particular, no visible
large-scale pollution appears in the far field, indicating that the diffuse interface
embedding and the PML truncation do not introduce spurious global distortion
into the solution. The sound-hard case exhibits a somewhat broader error
region and slightly larger error than the sound-soft case, which is consistent with the larger
interface thickness used in this experiment and with the additional
sensitivity of the Neumann-type treatment near the object boundary.
Nevertheless, in both cases the error remains confined to narrow zones
associated with the geometric interface and the strongest wave activity,
confirming that the finite-difference solution obtained from the embedded  model is in good agreement
with the sharp-interface finite-element reference.

\begin{exa}
	We next consider wave scattering by moving objects under both
	sound-soft and sound-hard boundary conditions. Two incident-wave
	parameter sets are used to illustrate moderate- and high-frequency
	regimes:
	\begin{itemize}
		\item $\bx_0=(-3,0)$, $\eta=0.25$, $w=10\pi$,
		      $\sigma=\frac{2}{25}$, $c=10$, $t_0=0$;
		\item $\bx_0=(-3,0)$, $\eta=0.01$, $w=100\pi$,
		      $\sigma=\frac{1}{1000}$, $c=10$, $t_0=0$.
	\end{itemize}
	In this example, $\Omega_{\rm phy} = (-7, 7)^2$
	$\Omega = (-9, 9)^2$, and $\varepsilon = 0.05$. {The circular and star-shaped objects are transported along straight lines with constant velocities, so their accelerations vanish throughout the simulations.}
\end{exa}

\begin{figure}[H]
	\begin{center}
		\includegraphics[width=0.24\textwidth]{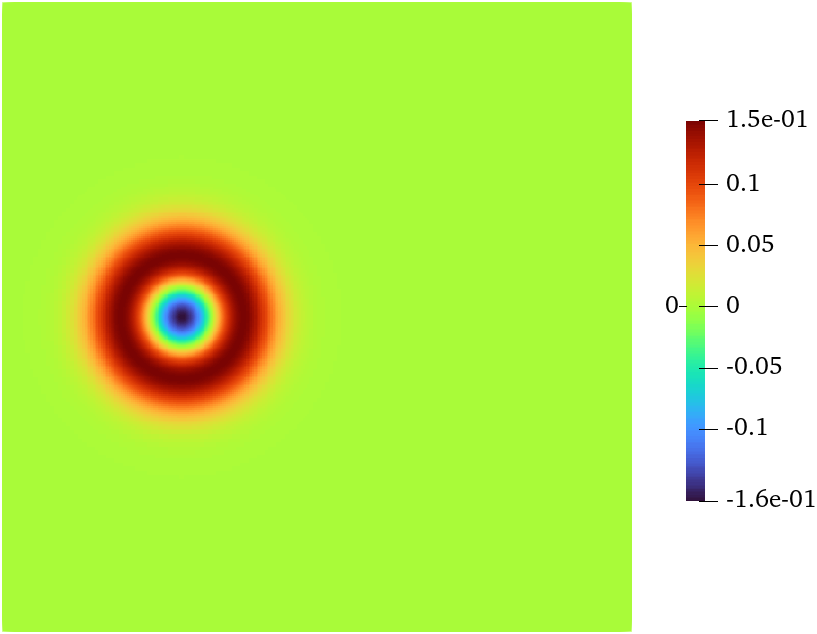}
		\includegraphics[width=0.24\textwidth]{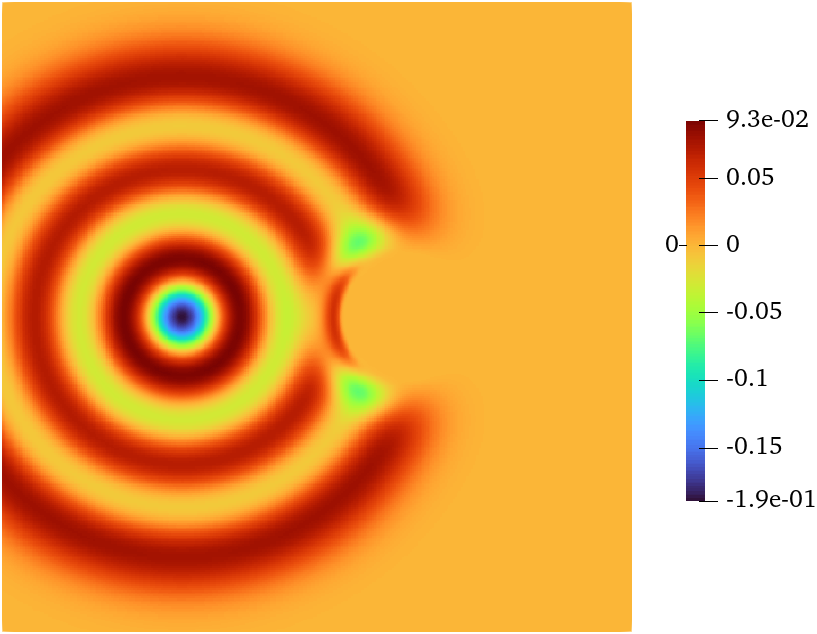}
		\includegraphics[width=0.24\textwidth]{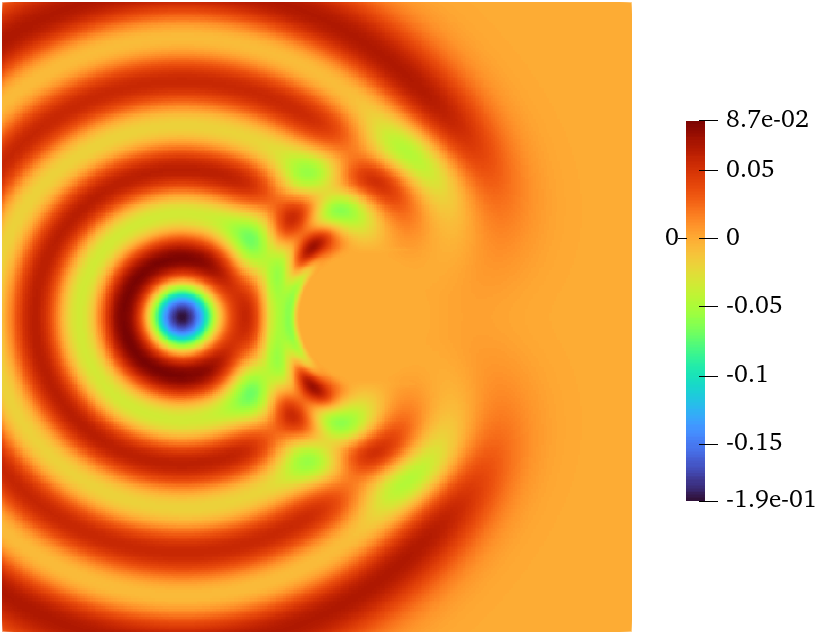}
		\includegraphics[width=0.24\textwidth]{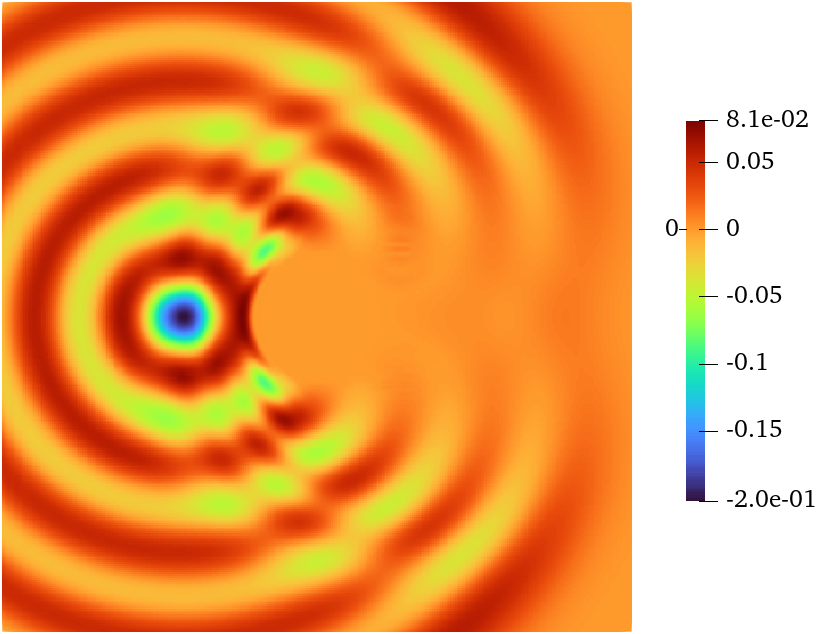}
		\includegraphics[width=0.24\textwidth]{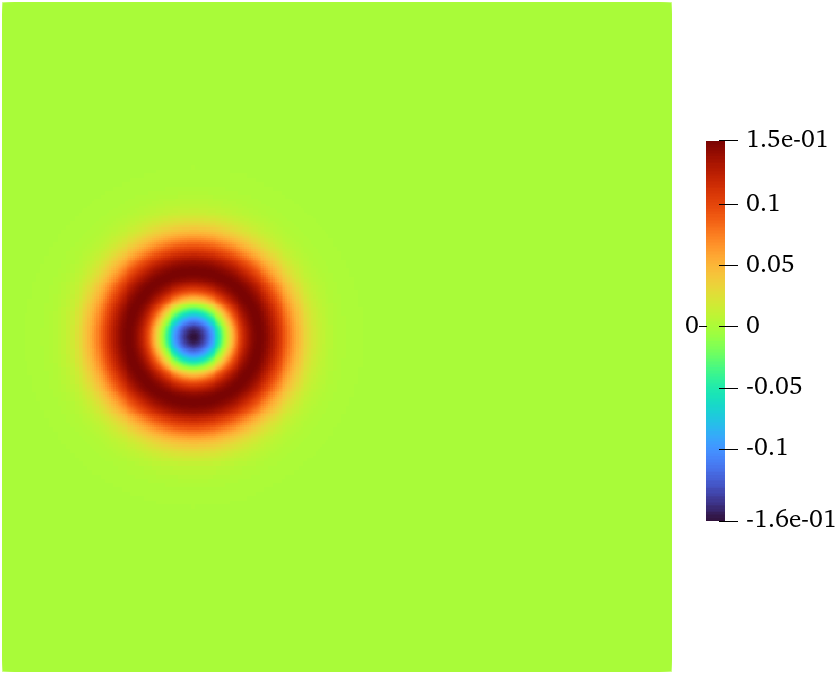}
		\includegraphics[width=0.24\textwidth]{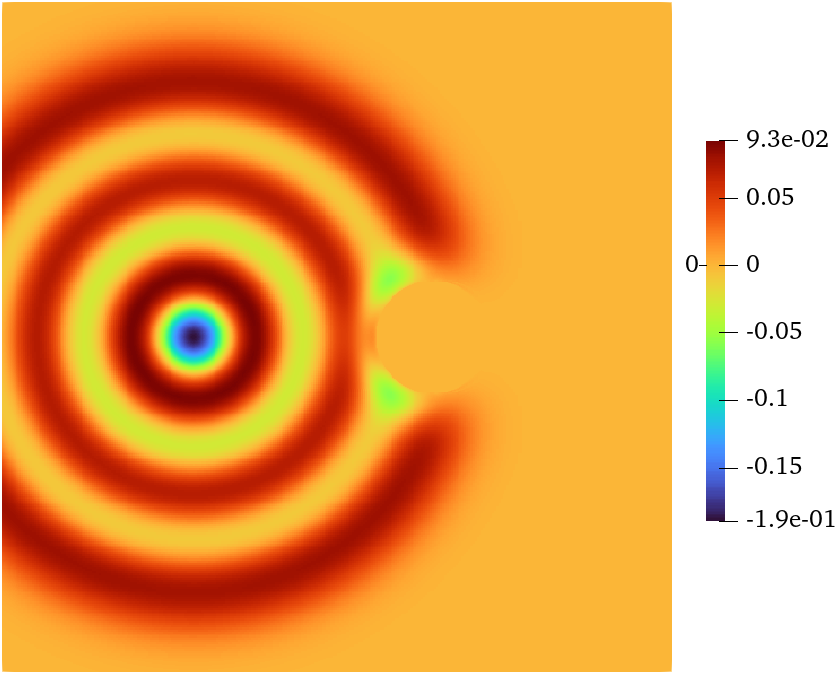}
		\includegraphics[width=0.24\textwidth]{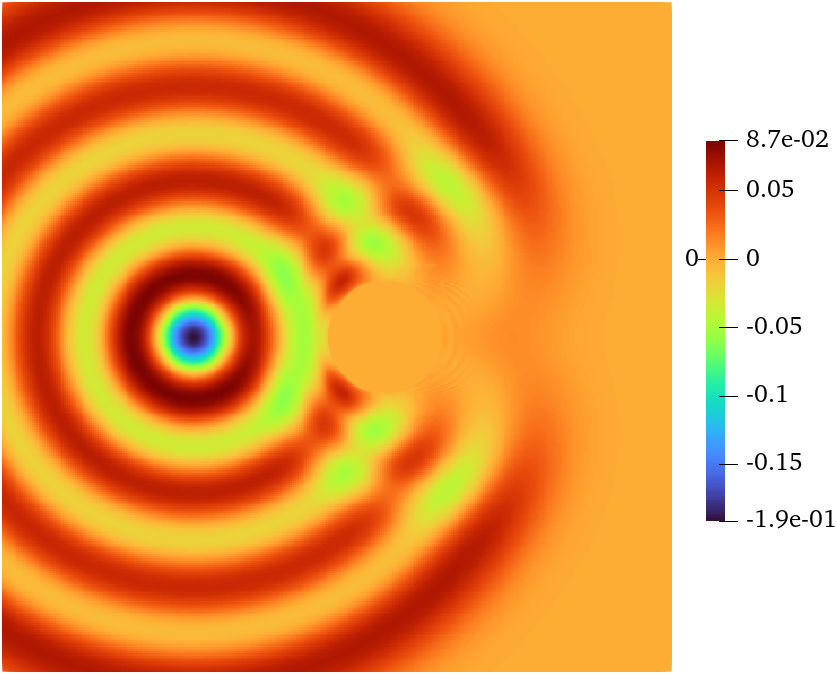}
		\includegraphics[width=0.24\textwidth]{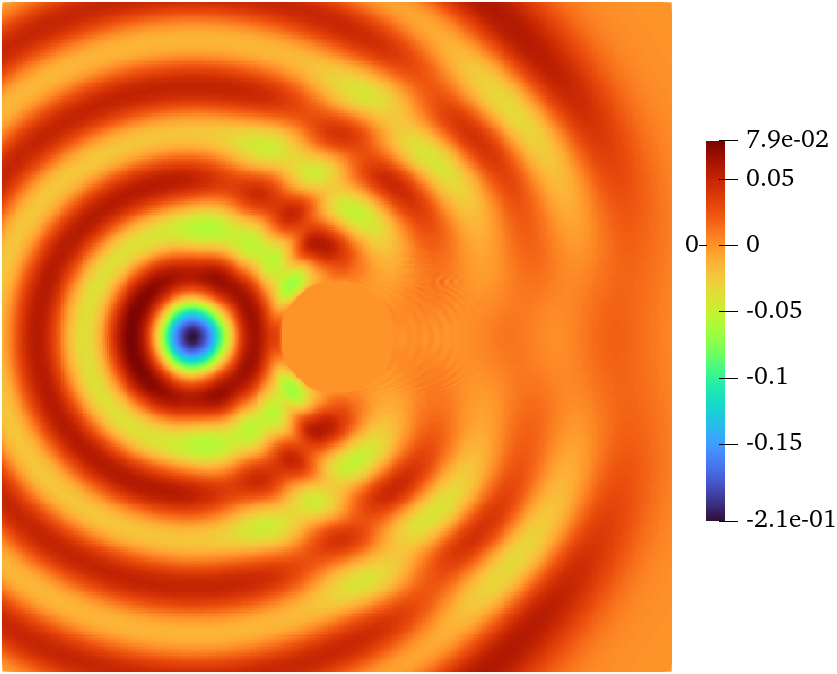}
		\begin{minipage}{0.235\textwidth}
			\hspace*{-0.25cm}
			\includegraphics[width=\linewidth]{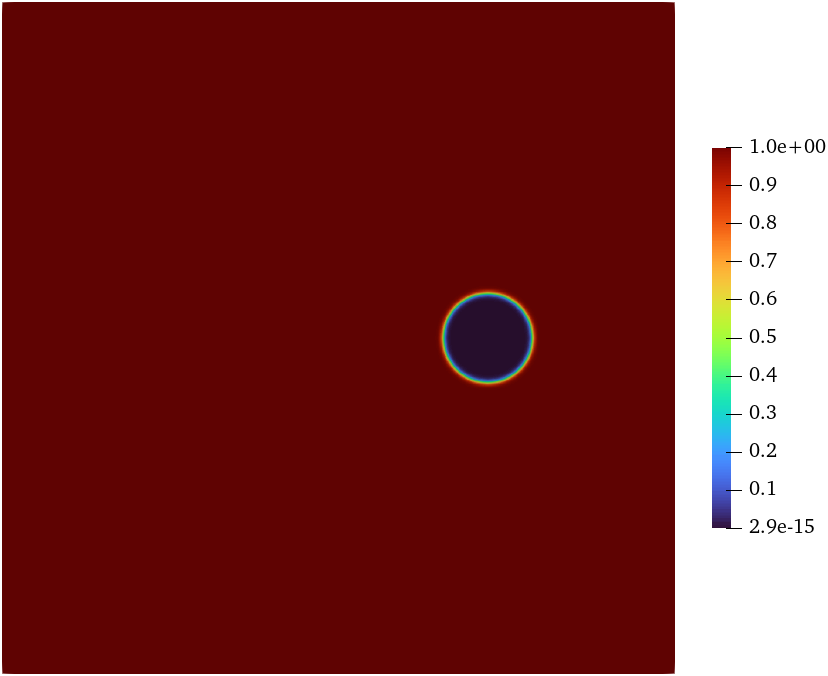}
		\end{minipage}
		\begin{minipage}{0.235\textwidth}
			\hspace*{-0.15cm}
			\includegraphics[width=\linewidth]{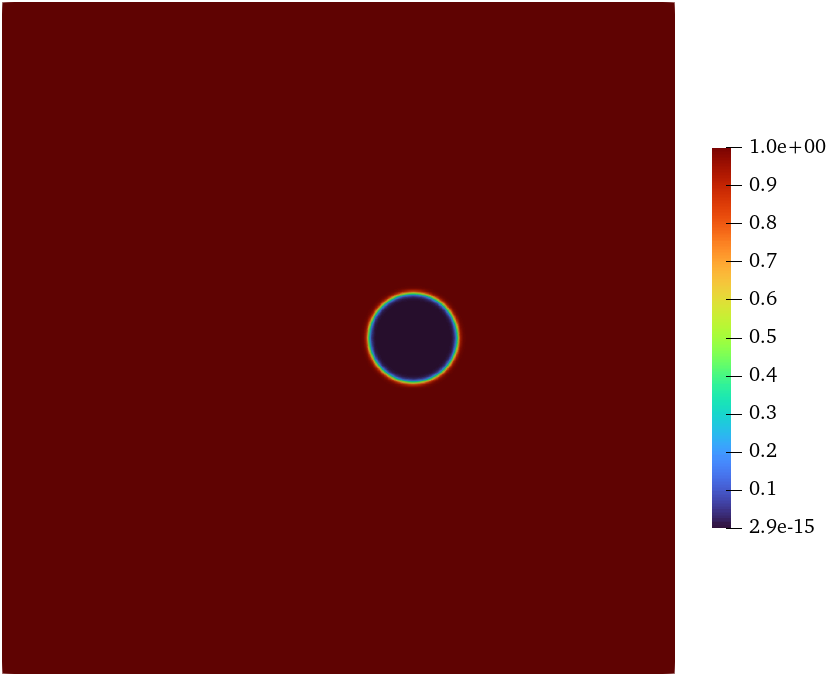}
		\end{minipage}
		\begin{minipage}{0.235\textwidth}
			\hspace*{-0.1cm}
			\includegraphics[width=\linewidth]{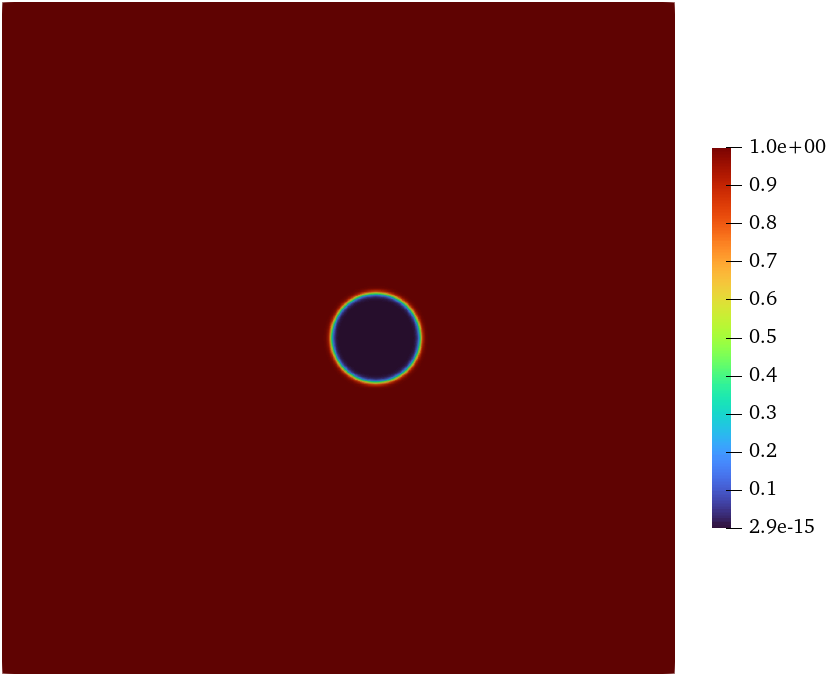}
		\end{minipage}
		\begin{minipage}{0.235\textwidth}
			\hspace*{0.05cm}
			\includegraphics[width=\linewidth]{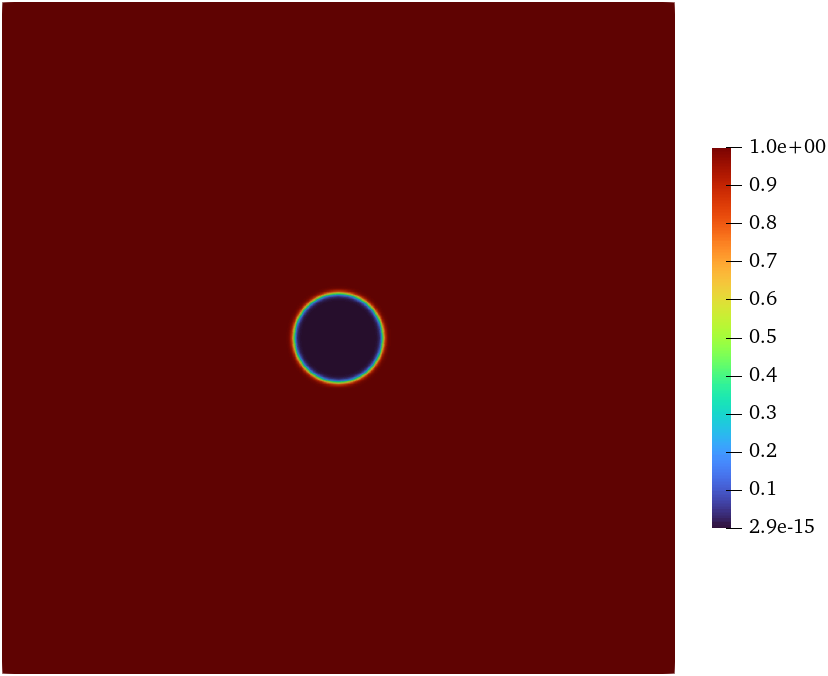}
		\end{minipage}
	\end{center}
	\caption{Pressure snapshots of a moving circular object in the
		moderate-frequency regime under sound-soft (top row) and sound-hard
		(middle row) boundary conditions at $t = 0.2, 0.6, 0.8,$ and $1.0$
		in $\Omega_{\rm phy}$, respectively; the bottom row shows the corresponding
		snapshots of $\psi_\varepsilon$.}\label{fig:moving-simple-circle.}
\end{figure}
\begin{figure}[H]
	\begin{center}
		\includegraphics[width=0.45\textwidth]{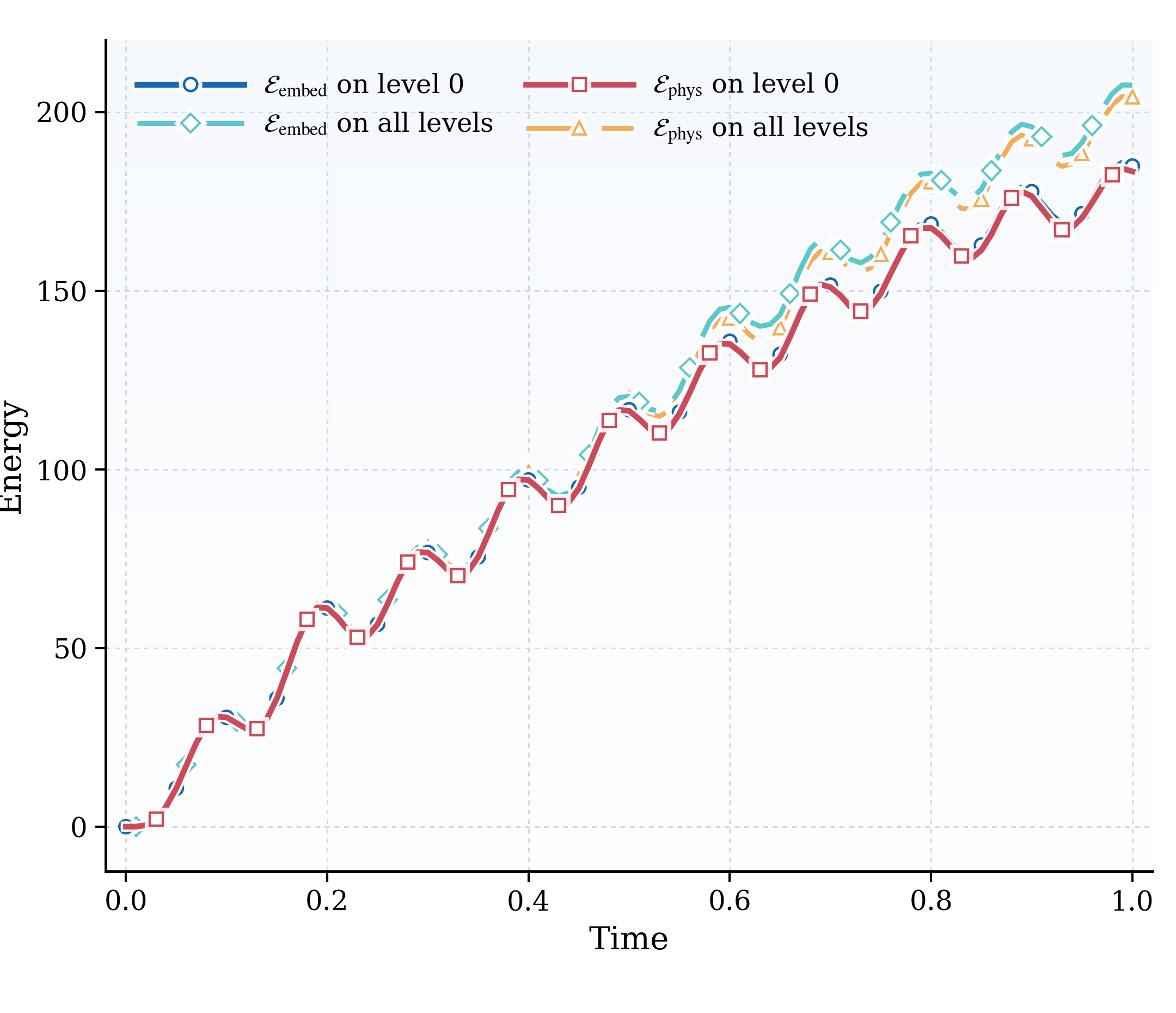}
		\includegraphics[width=0.45\textwidth]{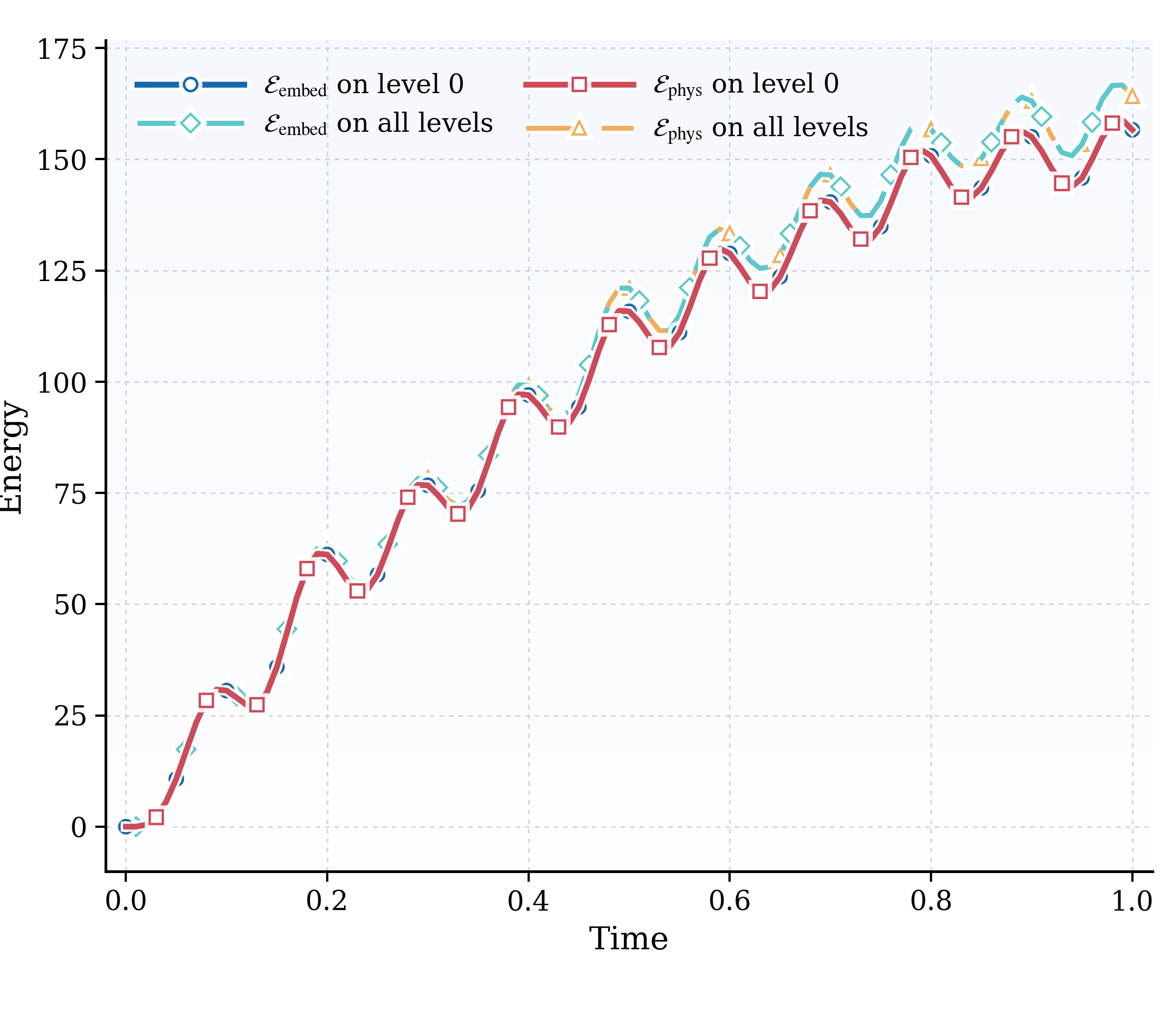}
	\end{center}
	\caption{Energy histories for the moving circular object in the
		moderate-frequency regime. Left: sound-soft case. Right: sound-hard
		case. In each panel, the weighted embedded energy $\mathcal{E}_{\rm
				embed}$ and the physical energy $\mathcal{E}_{\rm phys}$ are
		reported both on the coarsest level and on the composite adaptive
		hierarchy. }\label{fig:energy-circle-low}
\end{figure}

\begin{figure}[H]
	\begin{center}
		\includegraphics[width=0.24\textwidth]{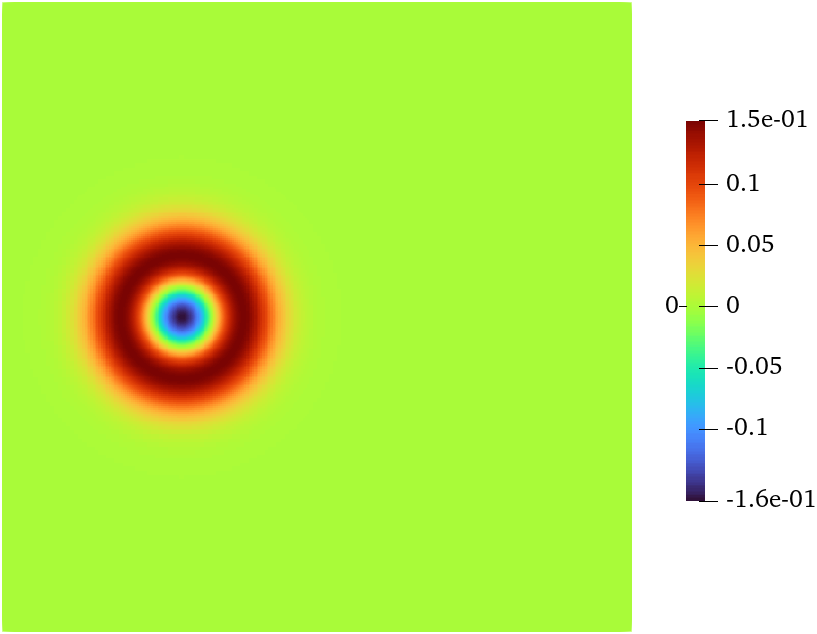}
		\includegraphics[width=0.24\textwidth]{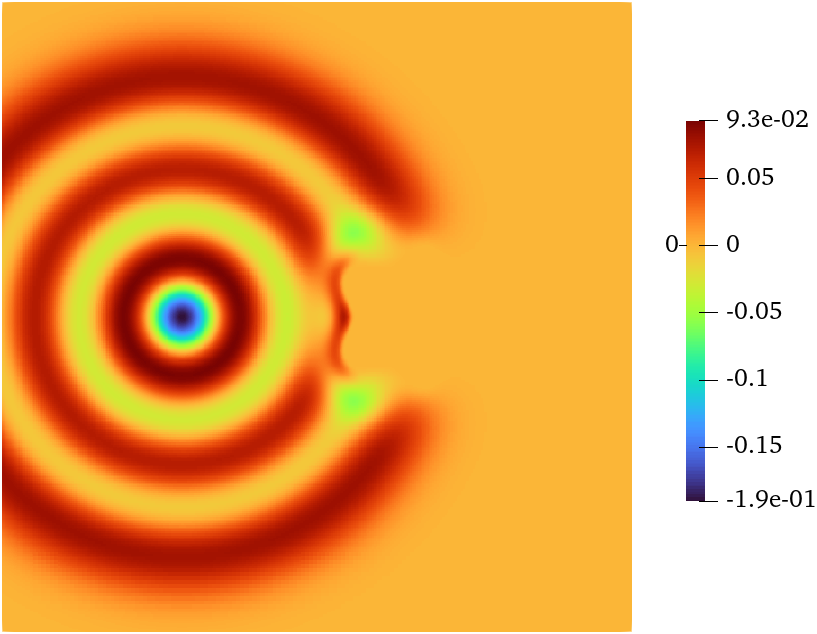}
		\includegraphics[width=0.24\textwidth]{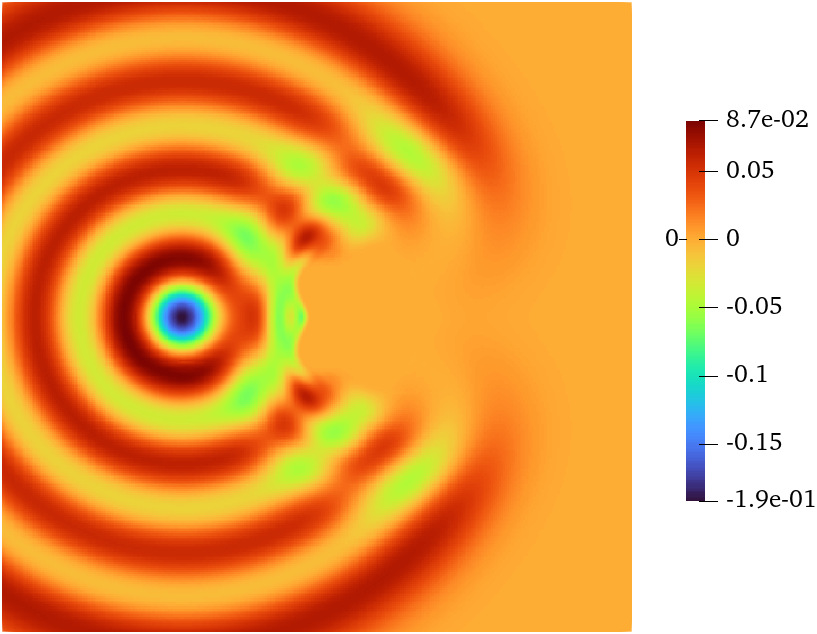}
		\includegraphics[width=0.24\textwidth]{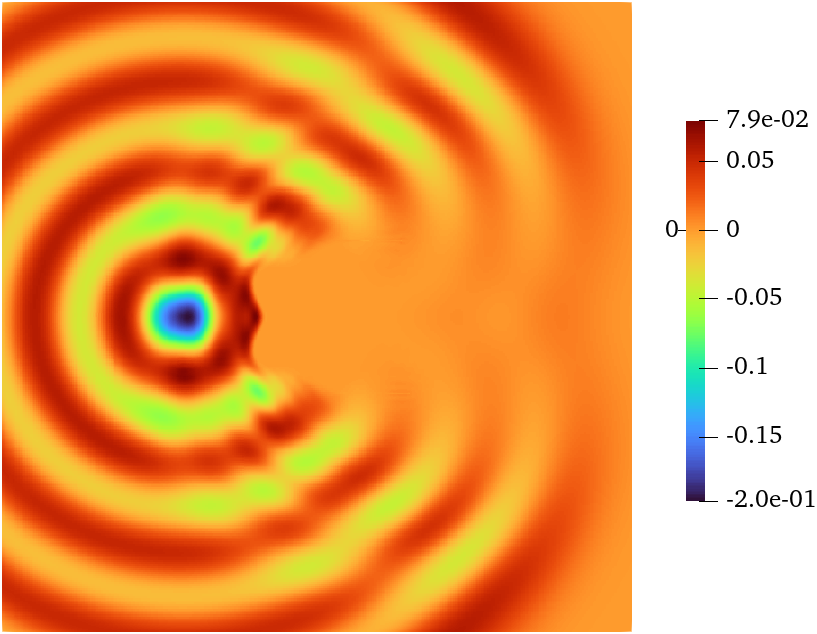}
		\includegraphics[width=0.24\textwidth]{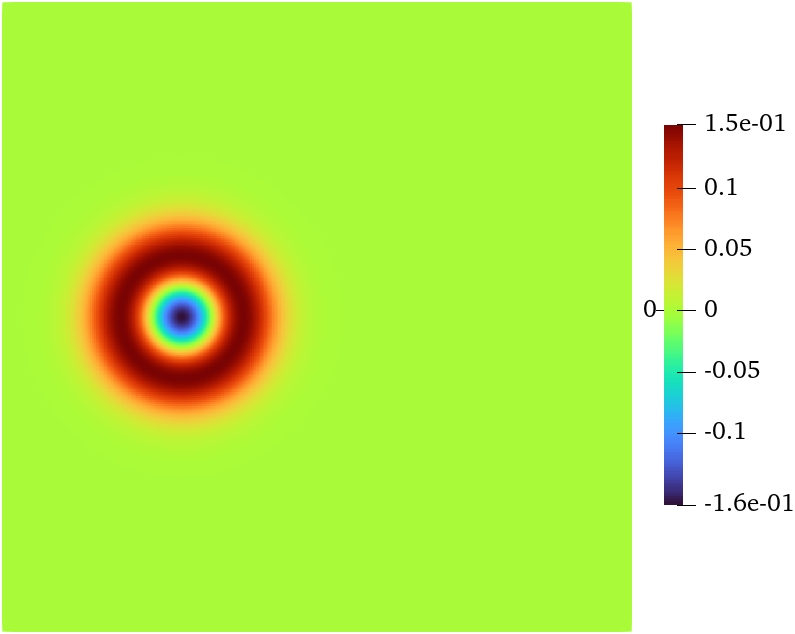}
		\includegraphics[width=0.24\textwidth]{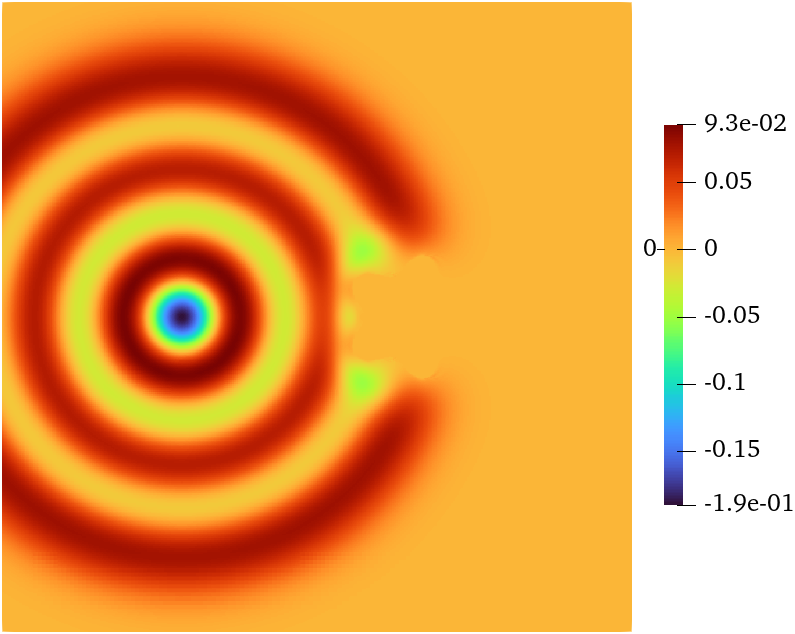}
		\includegraphics[width=0.24\textwidth]{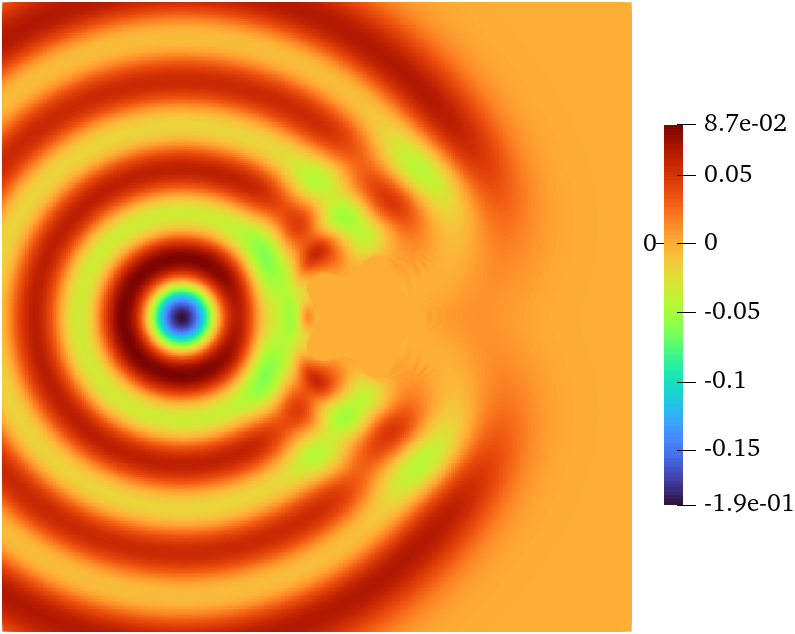}
		\includegraphics[width=0.24\textwidth]{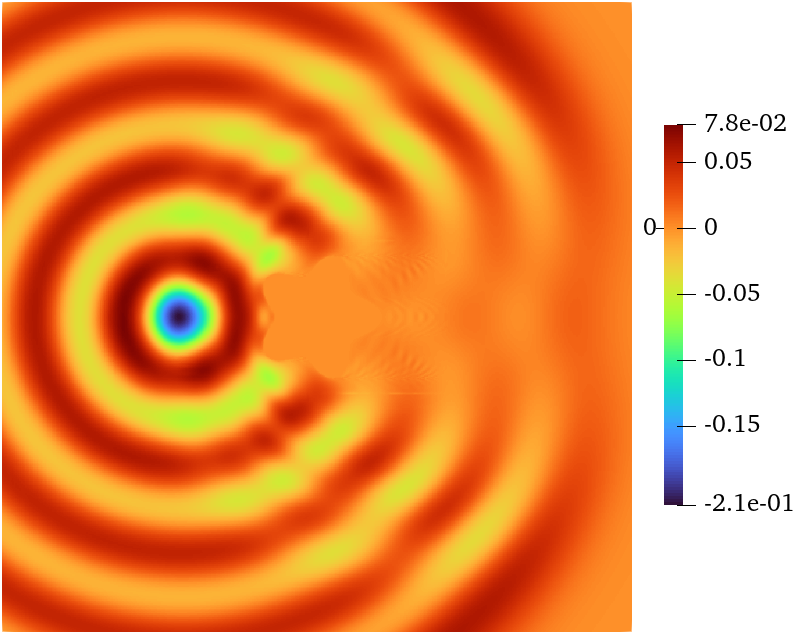}
		\begin{minipage}{0.235\textwidth}
			\hspace*{-0.25cm}
			\includegraphics[width=\linewidth]{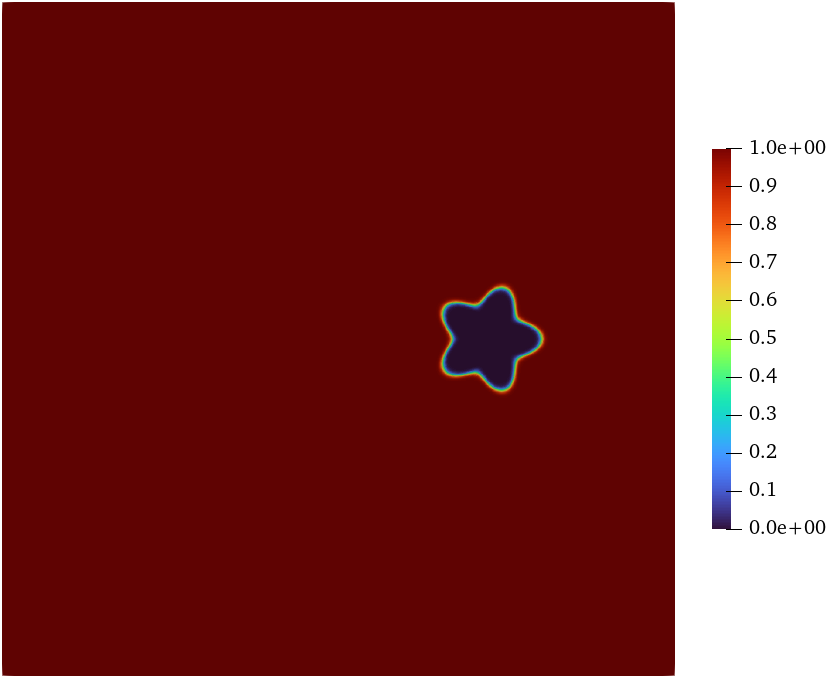}
		\end{minipage}
		\begin{minipage}{0.235\textwidth}
			\hspace*{-0.15cm}
			\includegraphics[width=\linewidth]{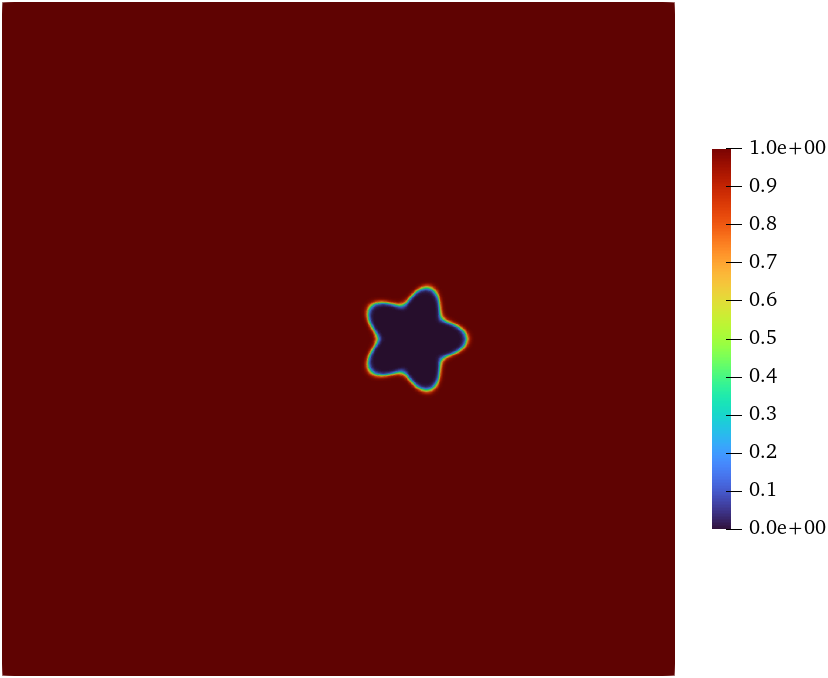}
		\end{minipage}
		\begin{minipage}{0.235\textwidth}
			\hspace*{-0.1cm}
			\includegraphics[width=\linewidth]{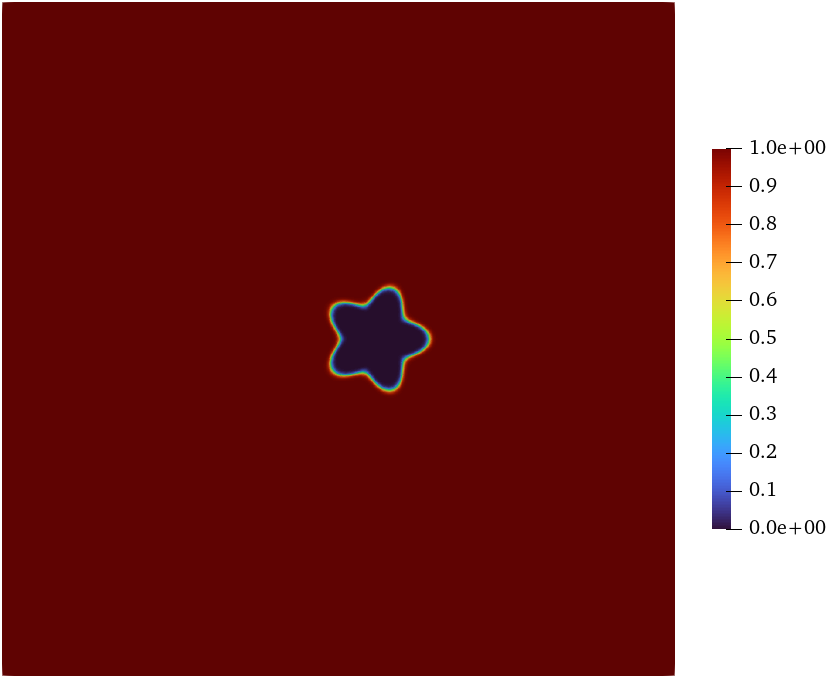}
		\end{minipage}
		\begin{minipage}{0.235\textwidth}
			\hspace*{0.05cm}
			\includegraphics[width=\linewidth]{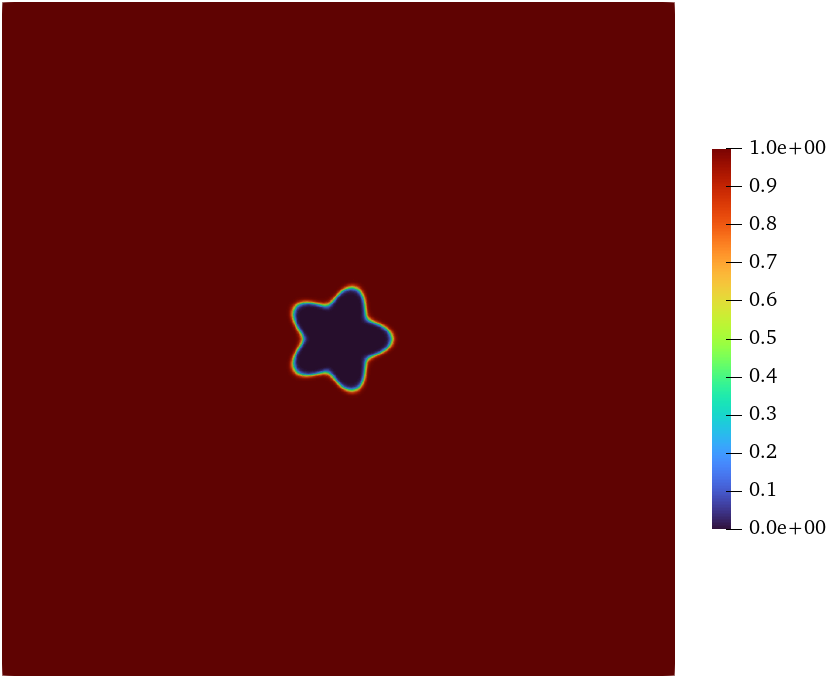}
		\end{minipage}
	\end{center}
	\caption{Pressure snapshots of a moving star-shaped object in the
		moderate-frequency regime under sound-soft (top row) and sound-hard
		(middle row) boundary conditions at $t = 0.2, 0.6, 0.8,$ and $1.0$
		in $\Omega_{\rm phy}$; the bottom row shows the corresponding
		snapshots of $\psi_\varepsilon$.}\label{fig:moving-simple-star.}
\end{figure}
\begin{figure}[H]
	\begin{center}
		\includegraphics[width=0.45\textwidth]{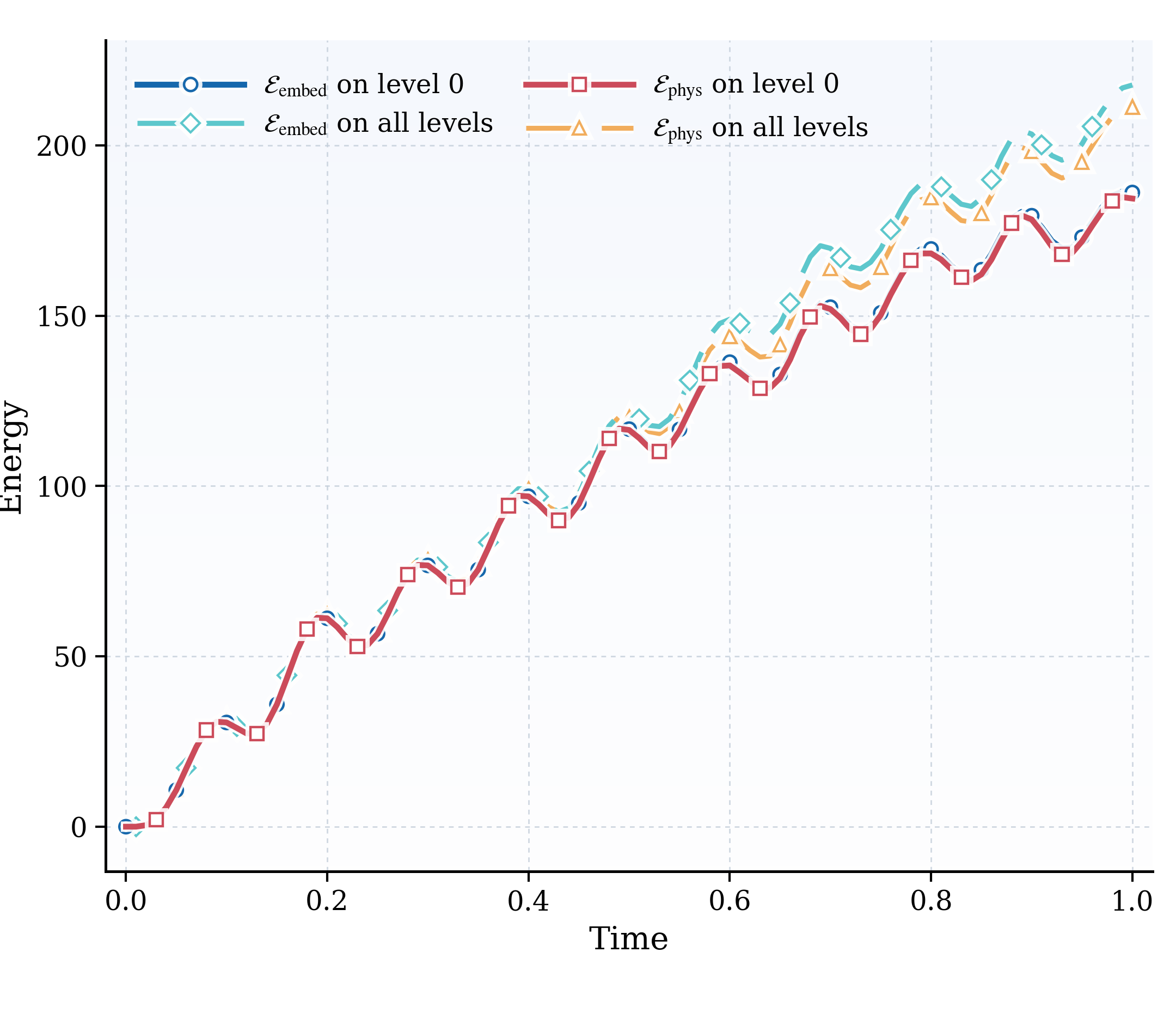}
		\includegraphics[width=0.45\textwidth]{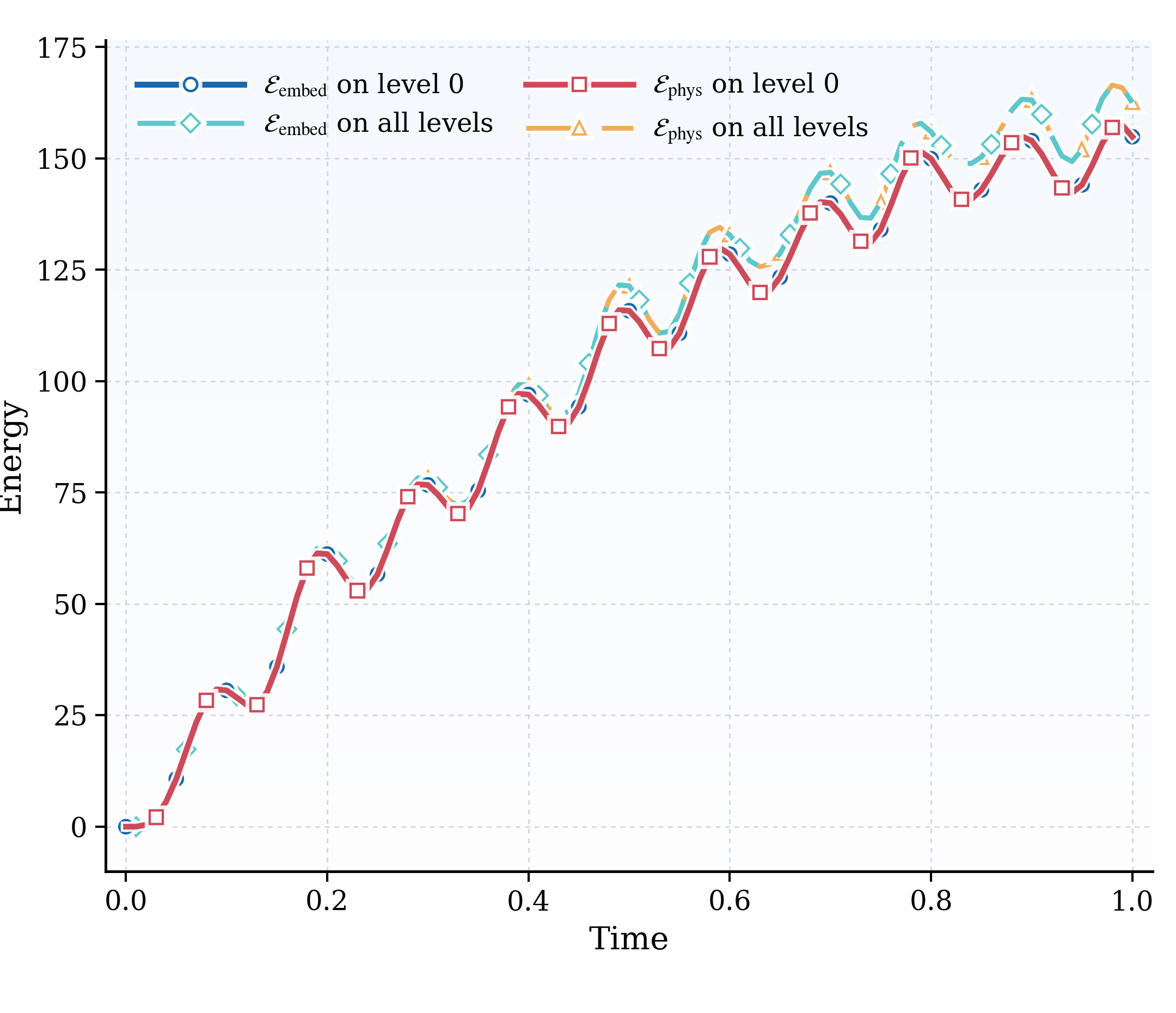}
	\end{center}
	\caption{Energy histories for the moving star-shaped object in
		the moderate-frequency regime. Left: sound-soft case. Right:
		sound-hard case. In each panel, the weighted embedded energy
		$\mathcal{E}_{\rm embed}$ and the physical energy $\mathcal{E}_{\rm
				phys}$ are reported both on the coarsest level and on the composite
		adaptive hierarchy.}\label{fig:energy-star-low}
\end{figure}

\begin{figure}[H]
	\begin{center}
		\includegraphics[width=0.24\textwidth]{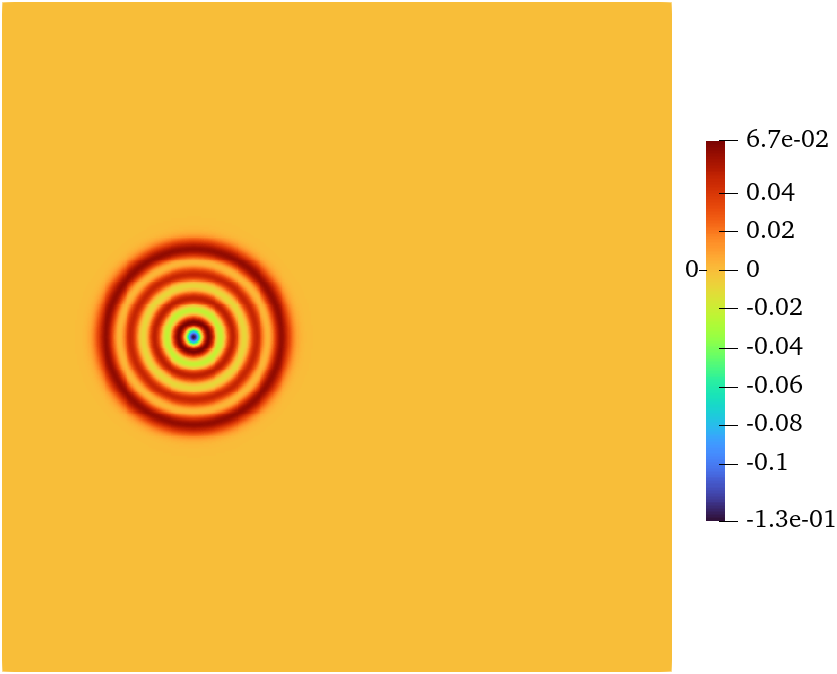}
		\includegraphics[width=0.24\textwidth]{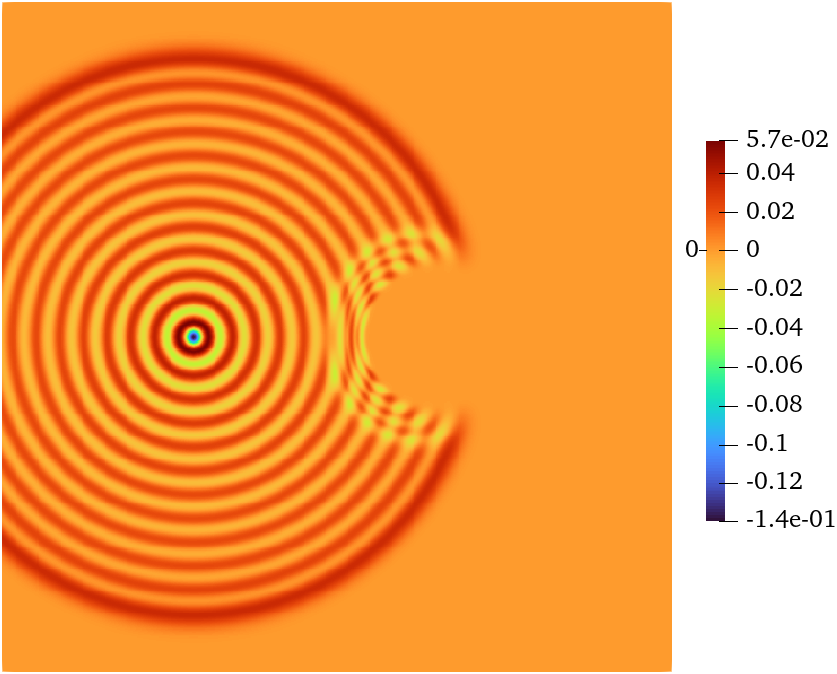}
		\includegraphics[width=0.24\textwidth]{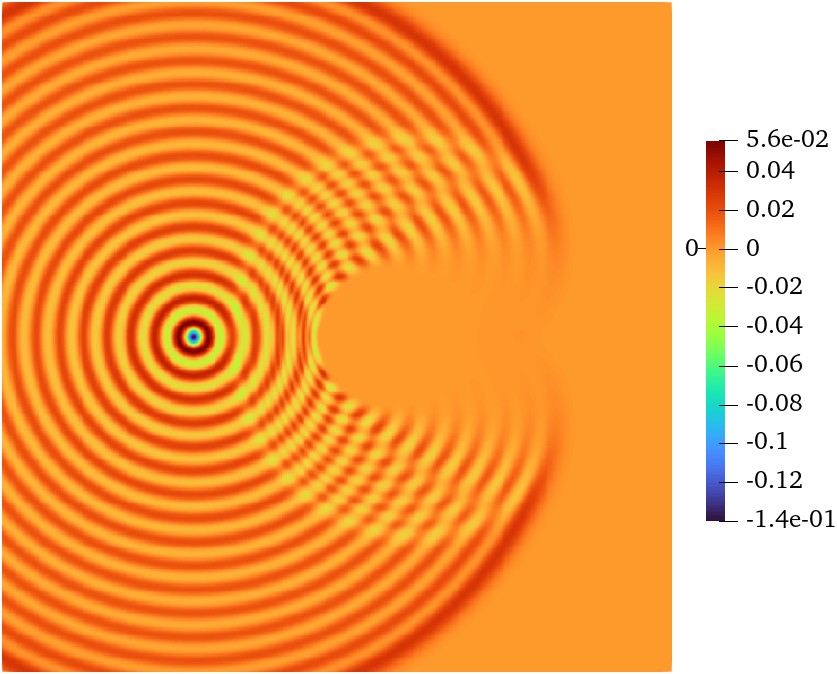}
		\includegraphics[width=0.24\textwidth]{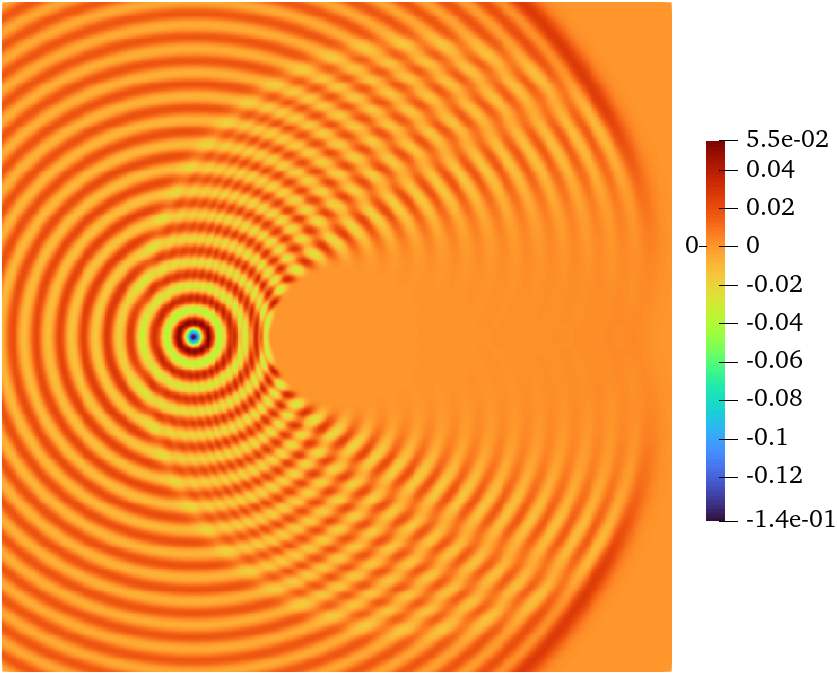}
		\includegraphics[width=0.24\textwidth]{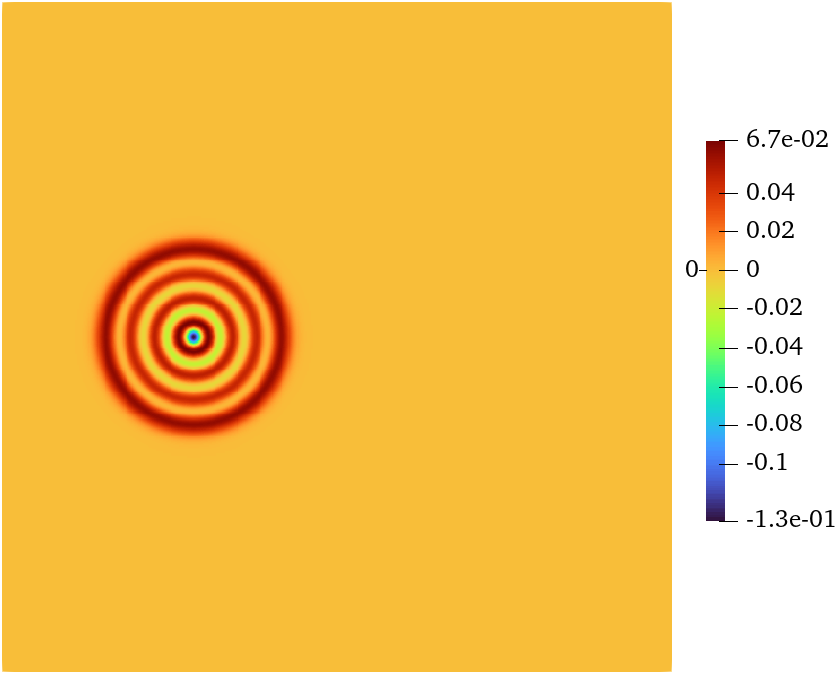}
		\includegraphics[width=0.24\textwidth]{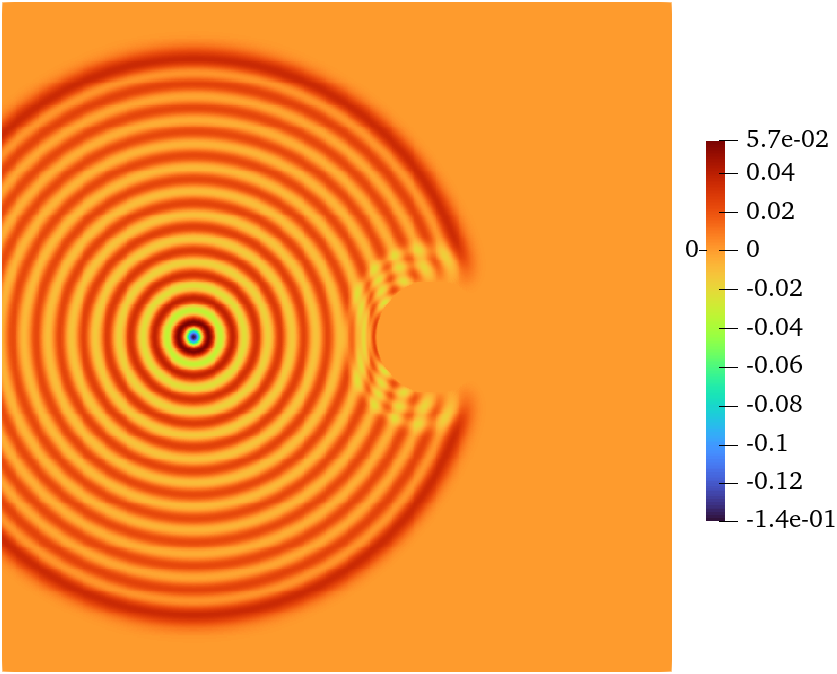}
		\includegraphics[width=0.24\textwidth]{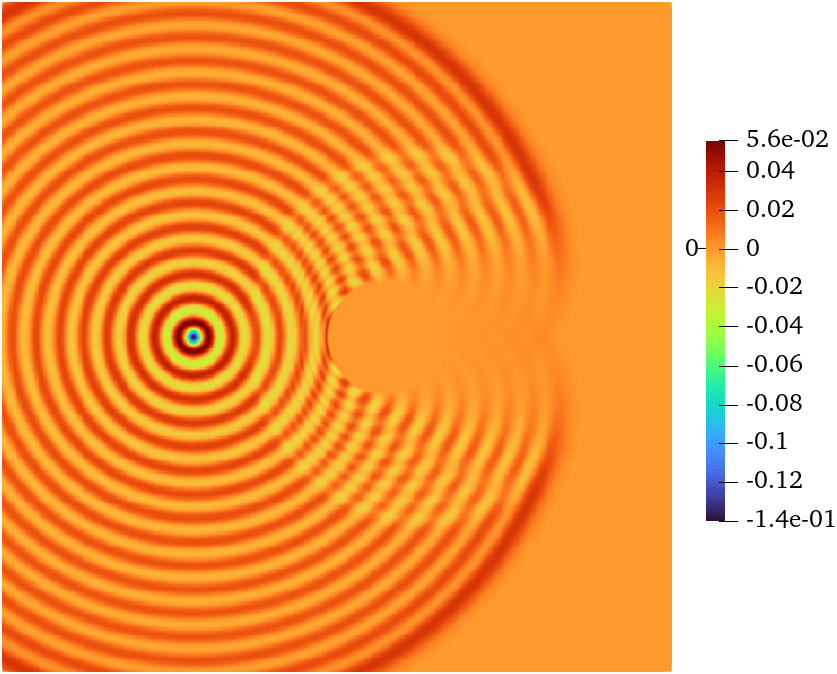}
		\includegraphics[width=0.24\textwidth]{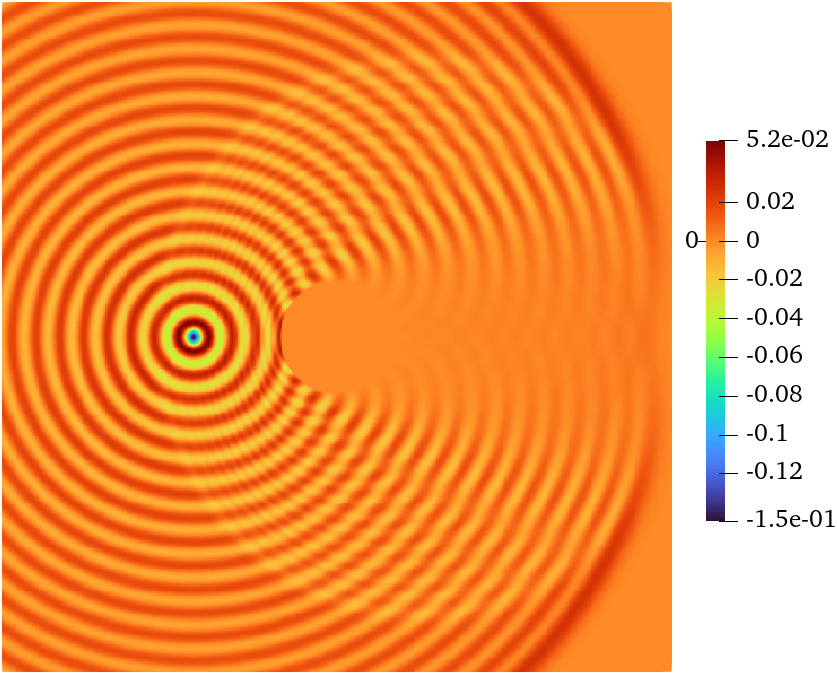}
		\begin{minipage}{0.235\textwidth}
			\hspace*{-0.25cm}
			\includegraphics[width=\linewidth]{./figs/psi_circle.0002.png}
		\end{minipage}
		\begin{minipage}{0.235\textwidth}
			\hspace*{-0.15cm}
			\includegraphics[width=\linewidth]{./figs/psi_circle.0006.png}
		\end{minipage}
		\begin{minipage}{0.235\textwidth}
			\hspace*{-0.1cm}
			\includegraphics[width=\linewidth]{./figs/psi_circle.0008.png}
		\end{minipage}
		\begin{minipage}{0.235\textwidth}
			\hspace*{0.05cm}
			\includegraphics[width=\linewidth]{./figs/psi_circle.0010.png}
		\end{minipage}
	\end{center}
	\caption{Pressure snapshots of a moving circular object in the
		high-frequency regime under sound-soft (top row) and sound-hard
		(middle row) boundary conditions at $t = 0.2, 0.6, 0.8,$ and $1.0$
		in $\Omega_{\rm phy}$; the bottom row shows the corresponding
		snapshots of $\psi_\varepsilon$.}\label{fig:moving-simple-circle_high.}
\end{figure}
\begin{figure}[H]
	\begin{center}
		\includegraphics[width=0.45\textwidth]{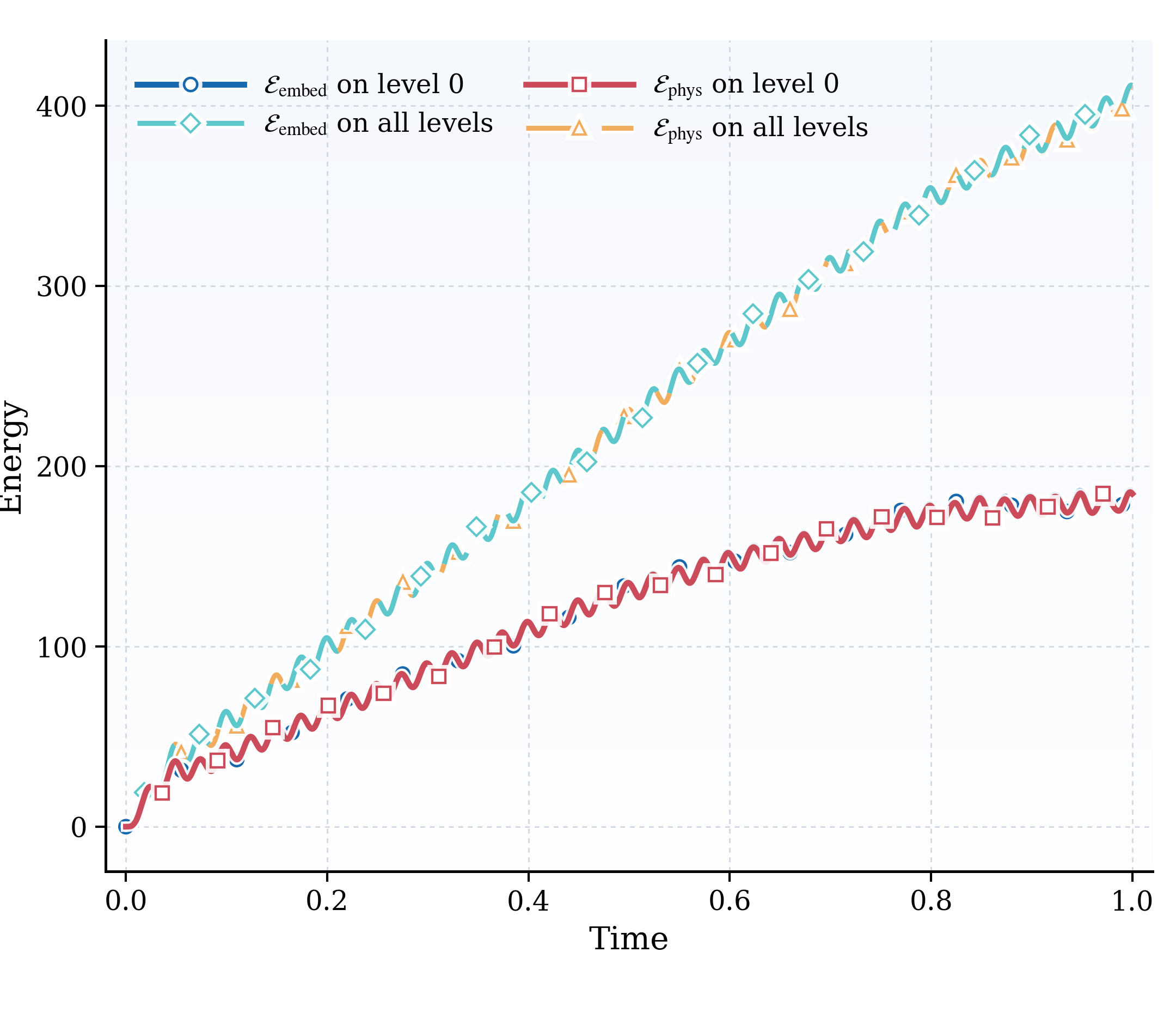}
		\includegraphics[width=0.45\textwidth]{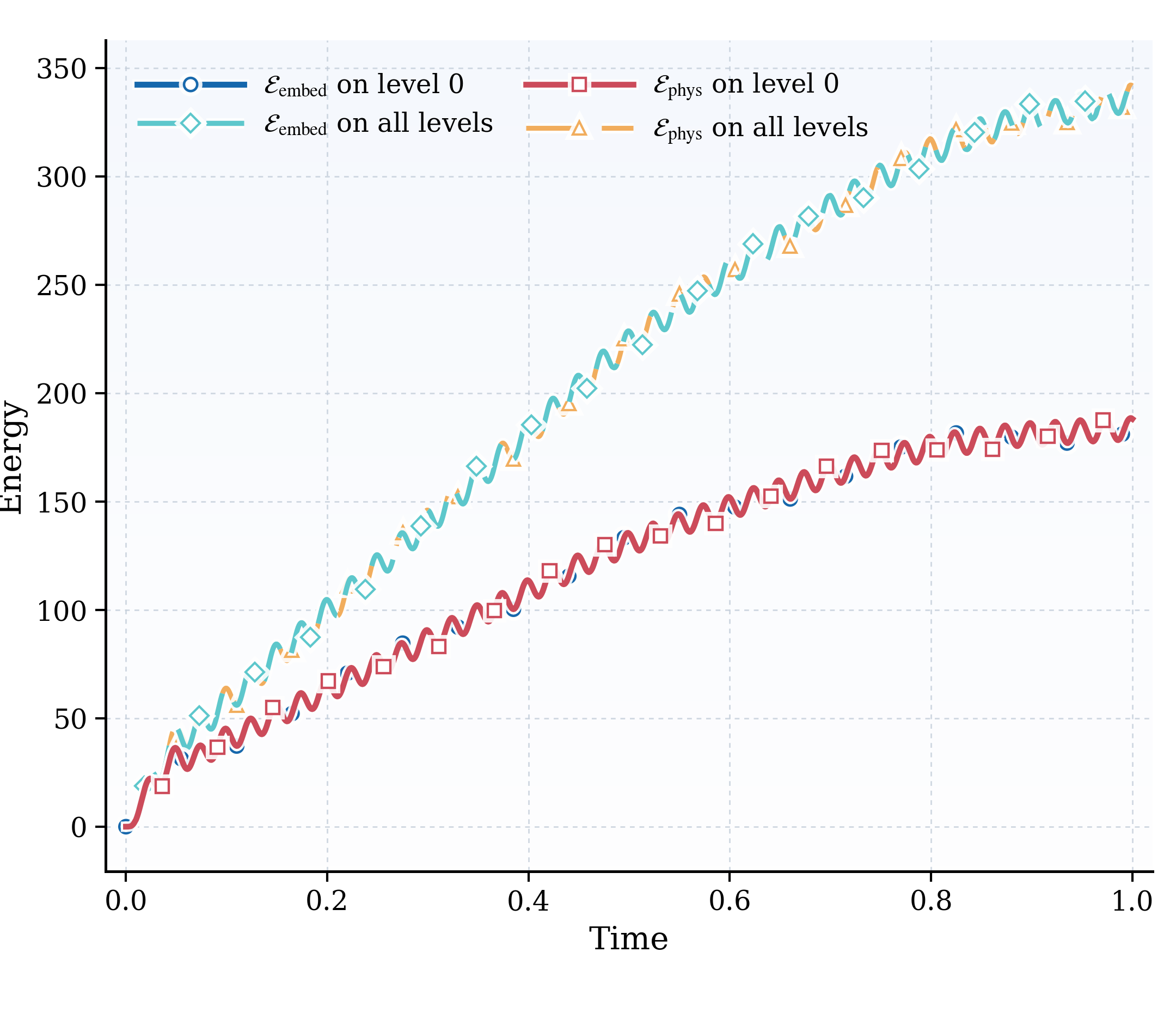}
	\end{center}
	\caption{Energy histories for the moving circular object in the
		high-frequency regime. Left: sound-soft case. Right: sound-hard
		case. In each panel, the weighted embedded energy $\mathcal{E}_{\rm
				embed}$ and the physical energy $\mathcal{E}_{\rm phys}$ are
		reported both on the coarsest level and on the composite adaptive
		hierarchy.}\label{fig:energy-circle-high}
\end{figure}

\begin{figure}[H]
	\begin{center}
		\includegraphics[width=0.24\textwidth]{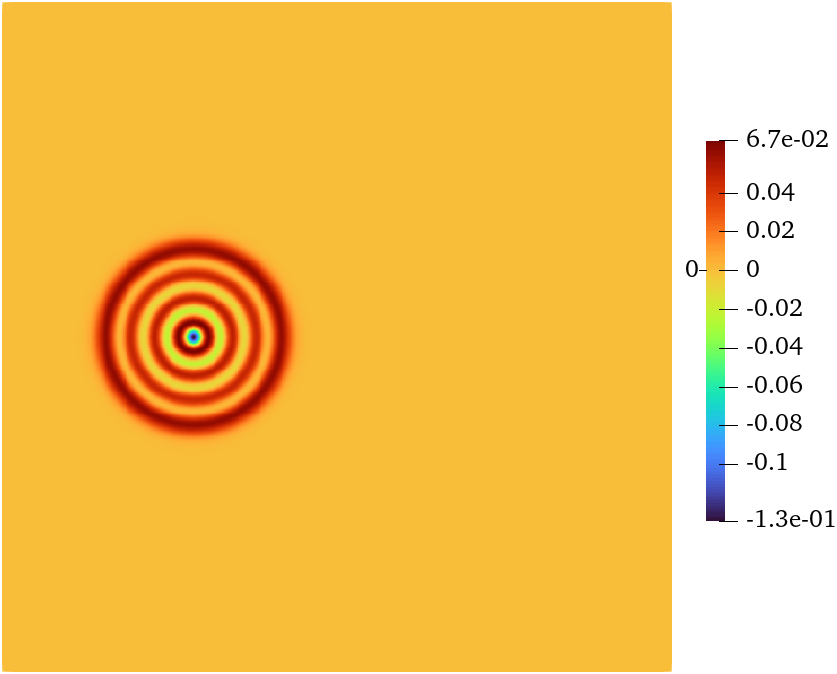}
		\includegraphics[width=0.24\textwidth]{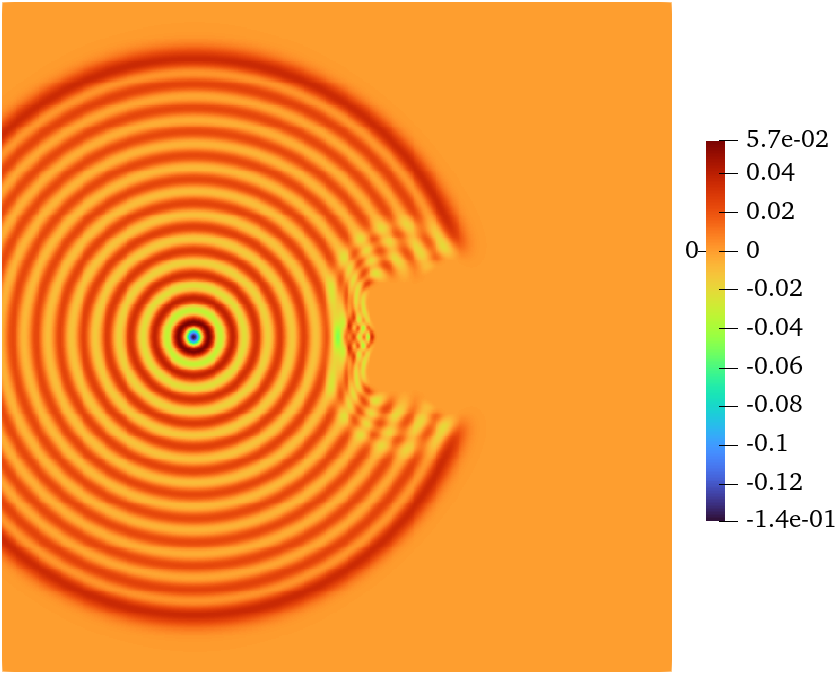}
		\includegraphics[width=0.24\textwidth]{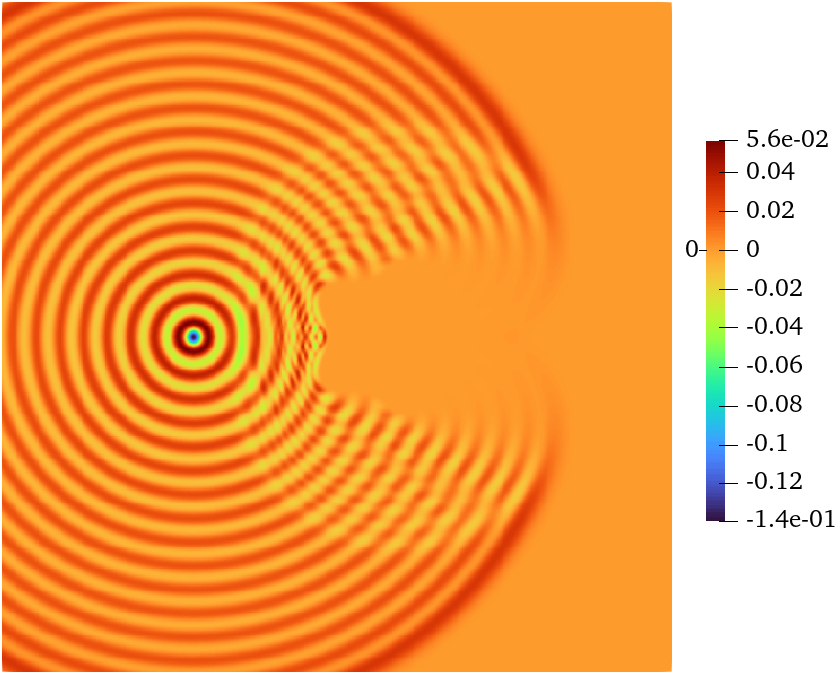}
		\includegraphics[width=0.24\textwidth]{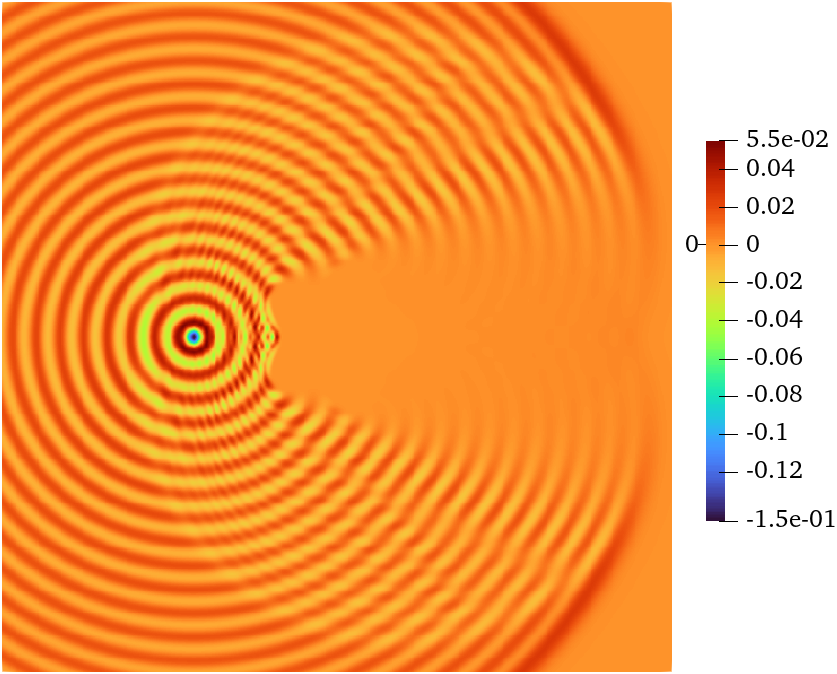}
		\includegraphics[width=0.24\textwidth]{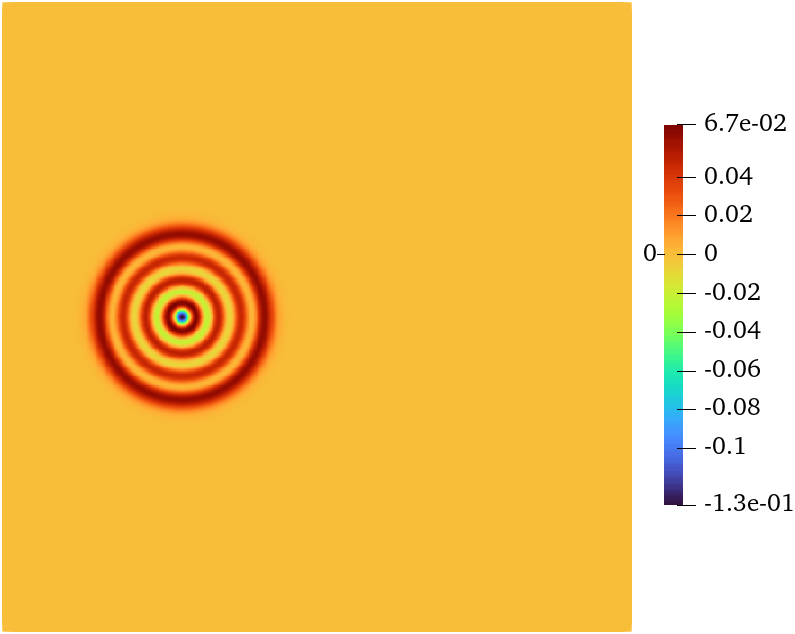}
		\includegraphics[width=0.24\textwidth]{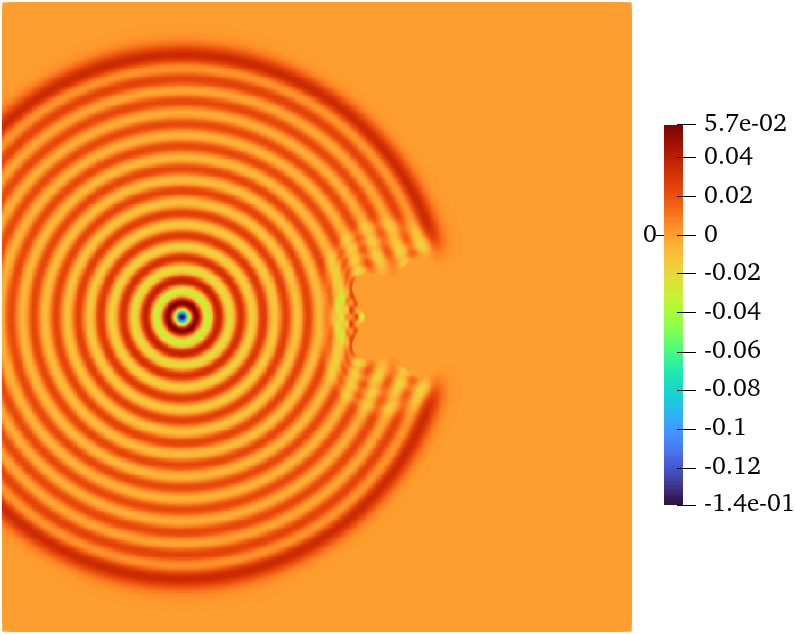}
		\includegraphics[width=0.24\textwidth]{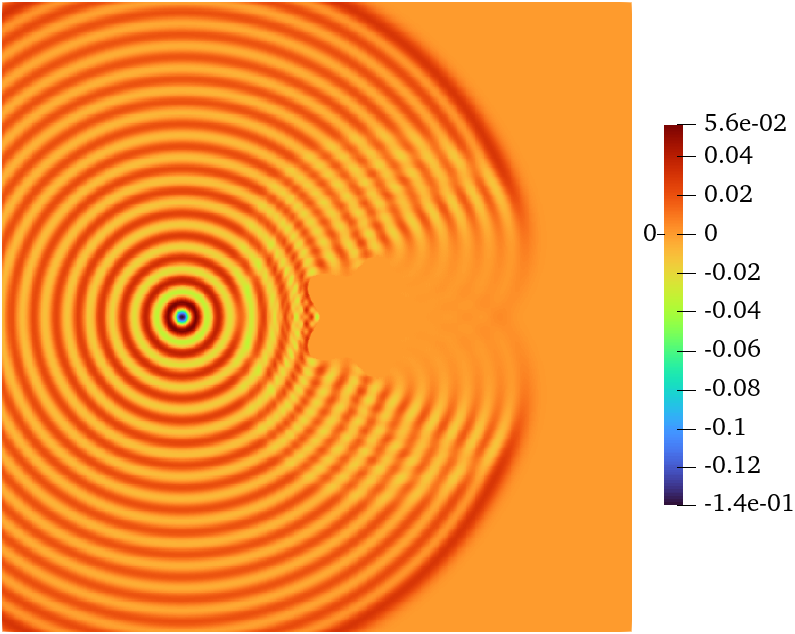}
		\includegraphics[width=0.24\textwidth]{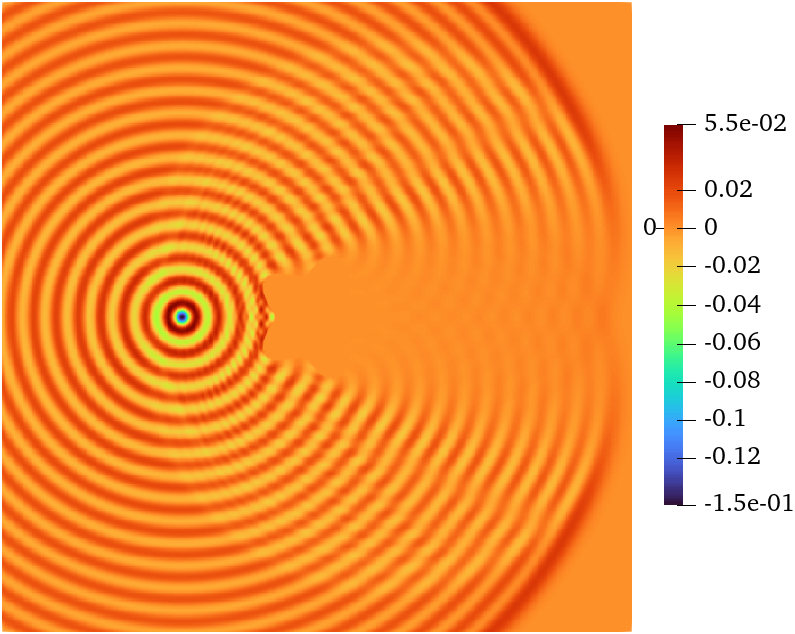}
		\begin{minipage}{0.235\textwidth}
			\hspace*{-0.25cm}
			\includegraphics[width=\linewidth]{./figs/psi_star.0002.png}
		\end{minipage}
		\begin{minipage}{0.235\textwidth}
			\hspace*{-0.15cm}
			\includegraphics[width=\linewidth]{./figs/psi_star.0006.png}
		\end{minipage}
		\begin{minipage}{0.235\textwidth}
			\hspace*{-0.1cm}
			\includegraphics[width=\linewidth]{./figs/psi_star.0008.png}
		\end{minipage}
		\begin{minipage}{0.235\textwidth}
			\hspace*{0.05cm}
			\includegraphics[width=\linewidth]{./figs/psi_star.0010.png}
		\end{minipage}
	\end{center}
	\caption{Pressure snapshots of a moving star-shaped object in the
		high-frequency regime under sound-soft (top row) and sound-hard
		(middle row) boundary conditions at $t = 0.2, 0.6, 0.8,$ and $1.0$
		in $\Omega_{\rm phy}$; the bottom row shows the corresponding
		snapshots of $\psi_\varepsilon$.}\label{fig:moving-simple-star_high.}
\end{figure}
\begin{figure}[H]
	\begin{center}
		\includegraphics[width=0.45\textwidth]{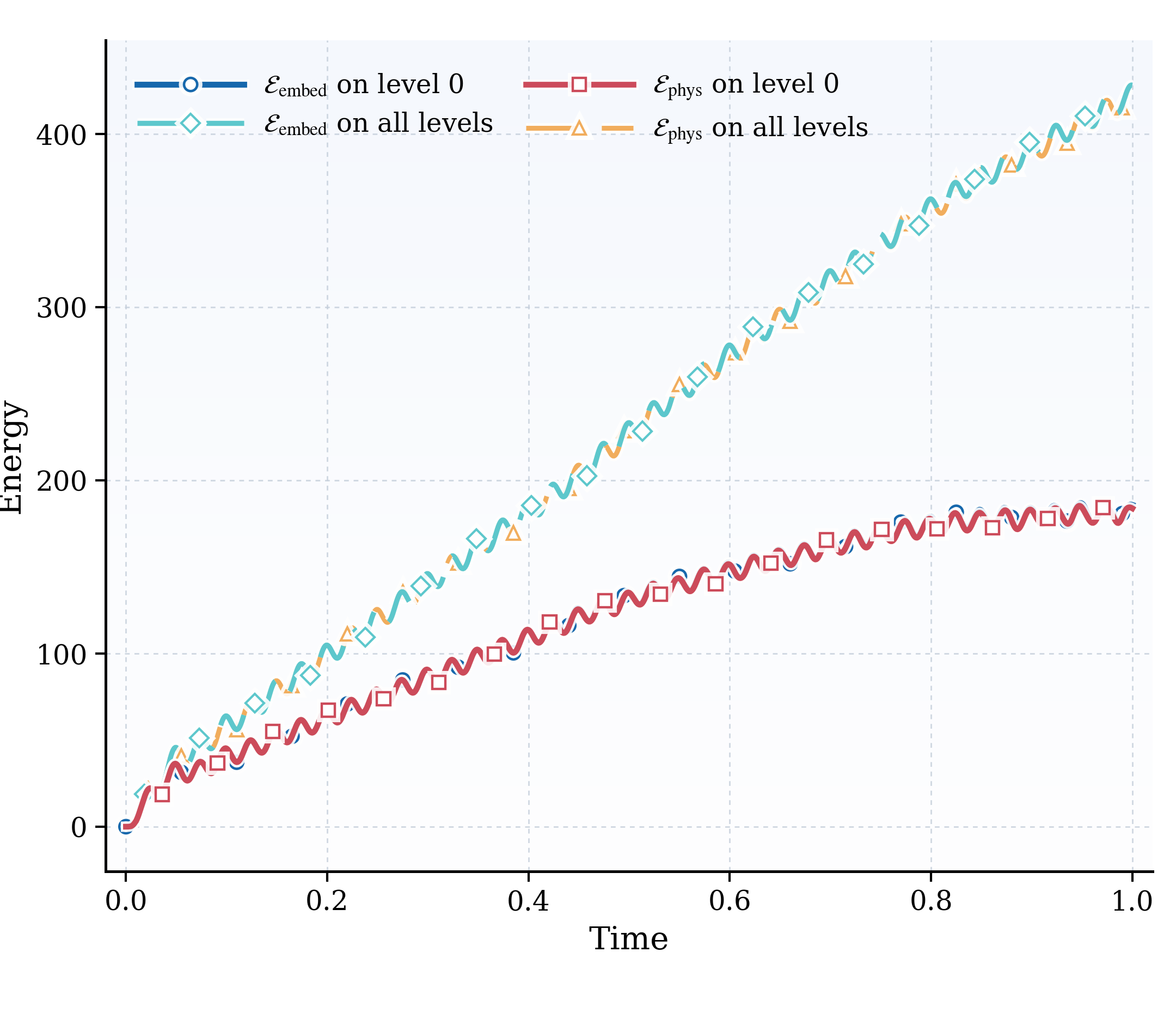}
		\includegraphics[width=0.45\textwidth]{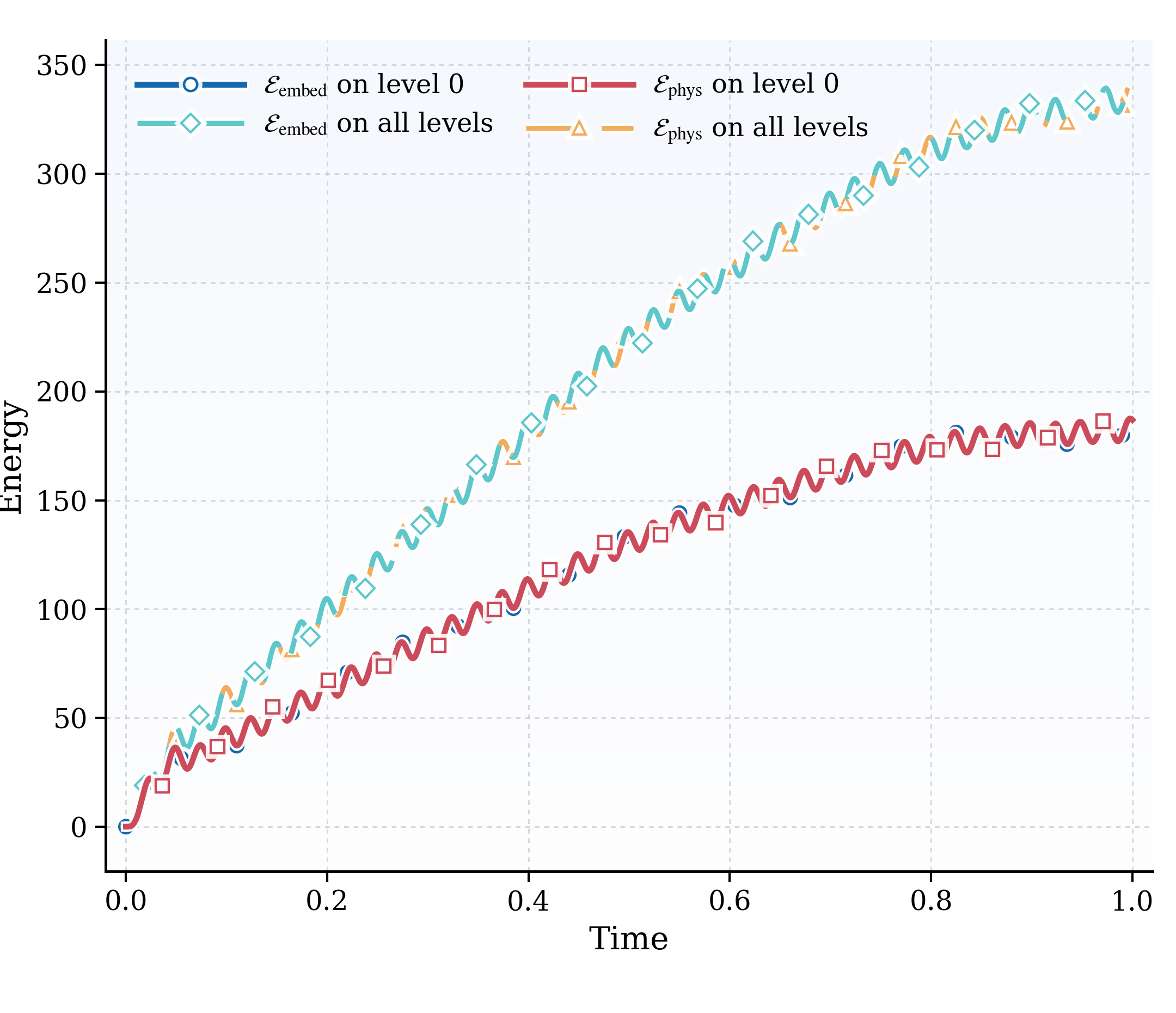}
	\end{center}
	\caption{Energy histories for the moving star-shaped object in
		the high-frequency regime. Left: sound-soft case. Right: sound-hard
		case. In each panel, the weighted embedded energy $\mathcal{E}_{\rm
				embed}$ and the physical energy $\mathcal{E}_{\rm phys}$ are
		reported both on the coarsest level and on the composite adaptive
		hierarchy.}\label{fig:energy-star-high}
\end{figure}


For the first parameter set, Figures~\ref{fig:moving-simple-circle.} and
\ref{fig:moving-simple-star.} display the scattered field generated by an incident wave packet of
moderate-frequency  for moving circular and
star-shaped objects, respectively. The bottom rows of these figures show the
corresponding snapshots of $\psi_\varepsilon$ and confirm that the
moving geometry is transported smoothly in the fixed computational domain.
After the incoming wave packet reaches the object, reflected fronts,
diffracted waves, and a distinct downstream shadow region develop in
both geometries. The comparison between the sound-soft and sound-hard
rows shows that the present framework accommodates both boundary
conditions without altering the grid representation; the most visible
difference lies in the near field, where the phase and amplitude of the
reflected pattern depend on the boundary type. In addition, the
star-shaped object induces a richer local distortion than the circle,
which is consistent with the stronger scattering effect of its corners
and narrow features.

The corresponding energy trajectories are shown in
Figures~\ref{fig:energy-circle-low} and \ref{fig:energy-star-low}. In
contrast to the static test, the energies are not expected to decay
monotonically because the wave source remains active and continuously
injects energy into the domain. The relevant observation here is that
$\mathcal{E}_{\rm embed}$ and $\mathcal{E}_{\rm phys}$ remain in close
agreement for each boundary condition and each geometry, indicating that
the weighted energy computed from the embedded model continues to track the ground truth physical energy even
in the presence of object motion. We calculate the energy in the coarse-grained grid and the adaptive gird, respectively, and refer it as the energy at level 0 and all levels. The all-level energy  curves lie above their
level-0 counterparts because they include the contribution of the
refined patches that resolve the moving interface and the dominant wave
fronts. In the moderate-frequency regime this gap remains relatively
small, consistent with the broader and smoother character of the
incident packet.

The second parameter set corresponds to a more oscillatory incident wave
that generates a significantly finer wavefront structure.
Figures~\ref{fig:moving-simple-circle_high.} and
\ref{fig:moving-simple-star_high.} show that, in this regime, the
influence of object geometry becomes even more pronounced. The moving
circular object still produces comparatively smooth reflected rings,
whereas the star-shaped object gives rise to richer interference
patterns and more visible local perturbations near the body. The
high-frequency tests also make the role of adaptive refinement more
important and indispensable, since the finer oscillations activate a larger portion of the
mesh hierarchy. This behavior is reflected in the energy trajectories in
Figures~\ref{fig:energy-circle-high} and \ref{fig:energy-star-high},
where the separation between the level-0 and all-level energy curves is more
pronounced than in the moderate-frequency regime. Nevertheless, the
energy computed from the embedded model and the physical energy remain qualitatively consistent across
all cases, and no visible spurious reflection is generated at the outer
boundary. These results indicate that the PML continues to function
effectively even when coupled with a time-dependent moving object using the embedding approach.

\begin{figure}
	\begin{center}
		\includegraphics[width=0.24\textwidth]{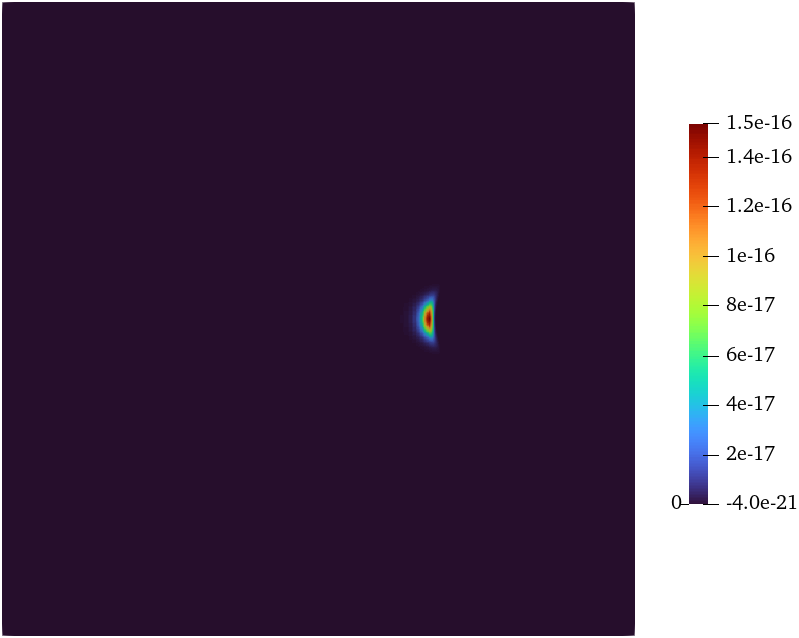}
		\includegraphics[width=0.24\textwidth]{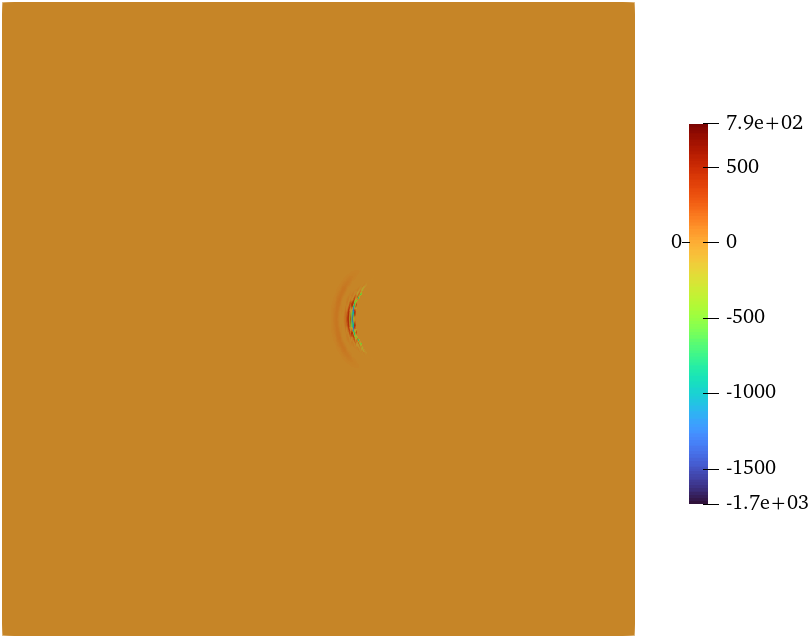}
		\includegraphics[width=0.24\textwidth]{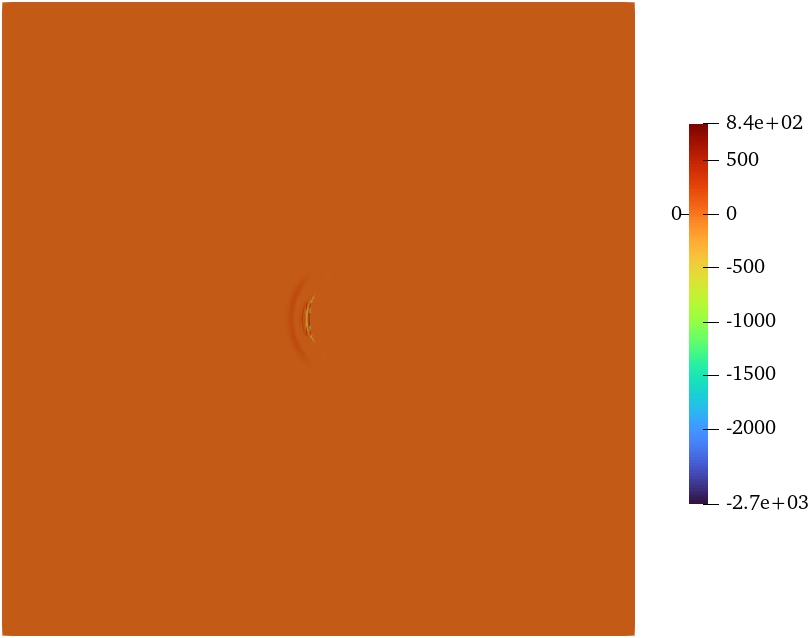}
		\includegraphics[width=0.24\textwidth]{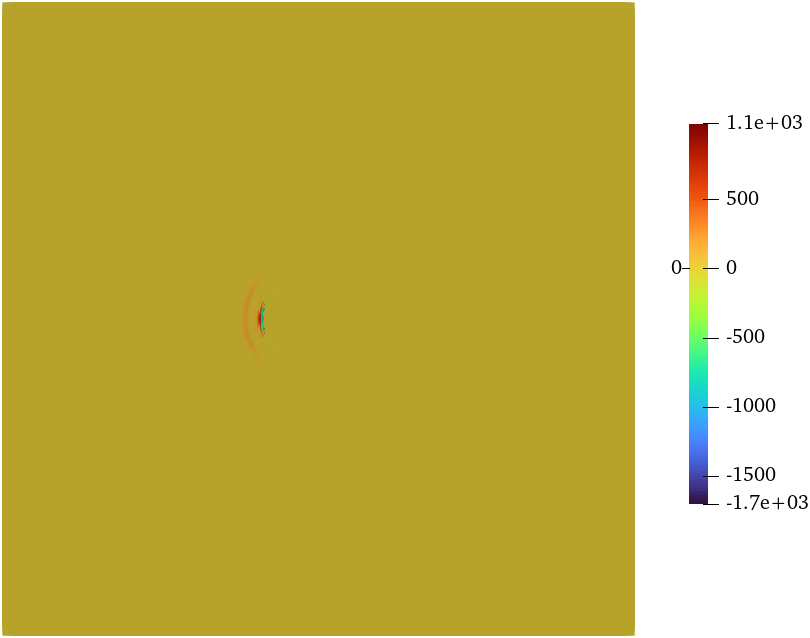}
		\begin{minipage}{0.23\textwidth}
			\hspace*{-0.45cm}
			\includegraphics[width=\linewidth]{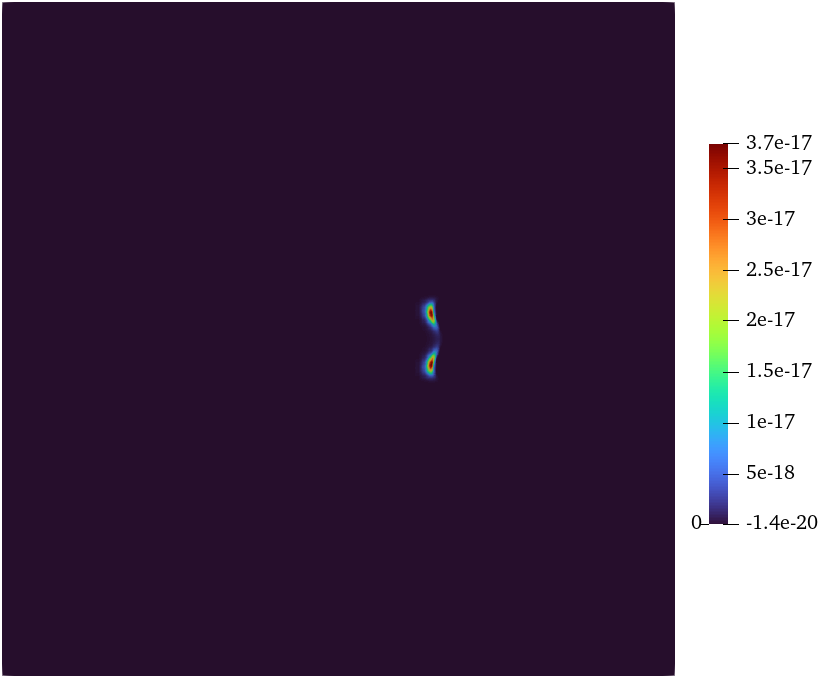}
		\end{minipage}
		\begin{minipage}{0.23\textwidth}
			\hspace*{-0.25cm}
			\includegraphics[width=\linewidth]{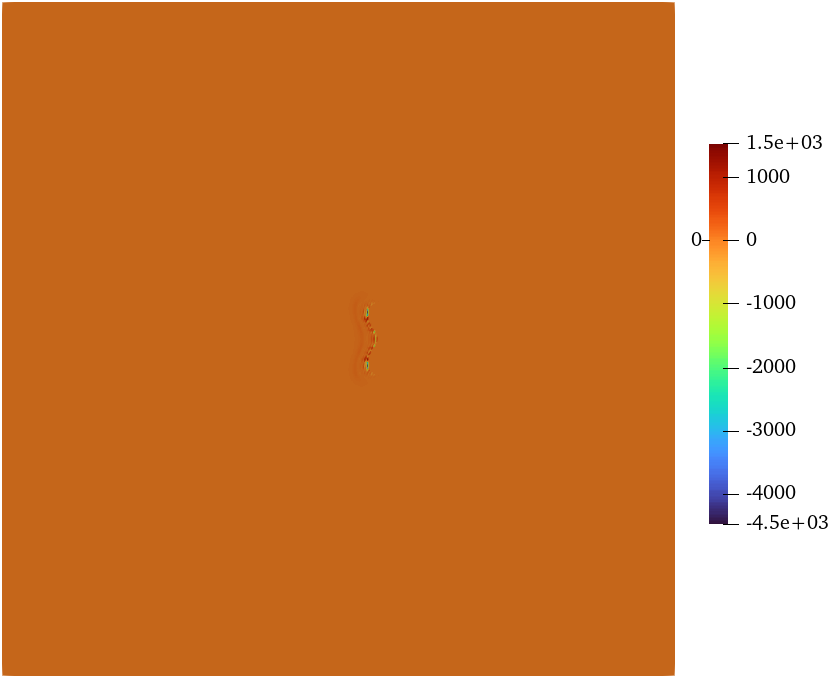}
		\end{minipage}
		\begin{minipage}{0.23\textwidth}
			\hspace*{-0.1cm}
			\includegraphics[width=\linewidth]{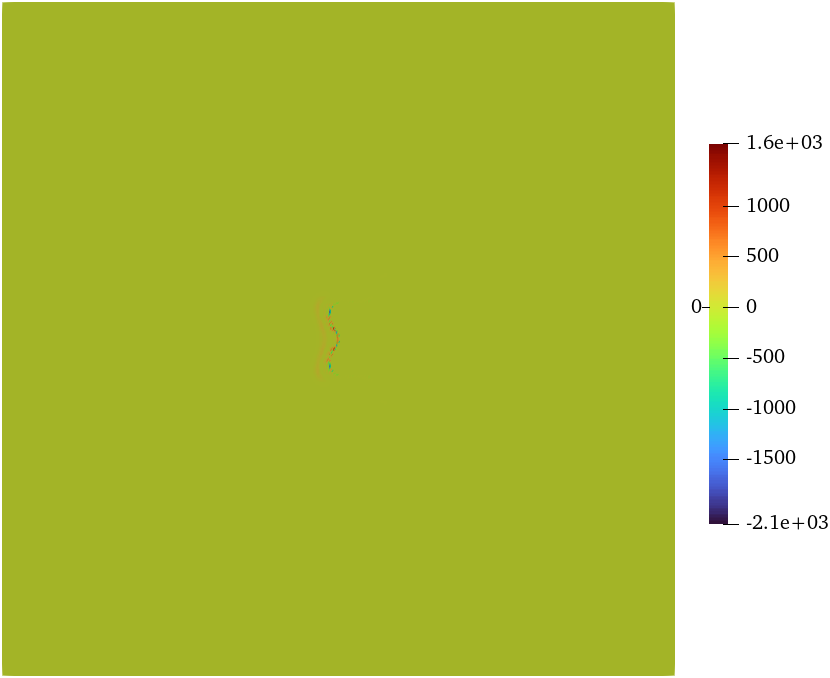}
		\end{minipage}
		\begin{minipage}{0.23\textwidth}
			\hspace*{0.15cm}
			\includegraphics[width=\linewidth]{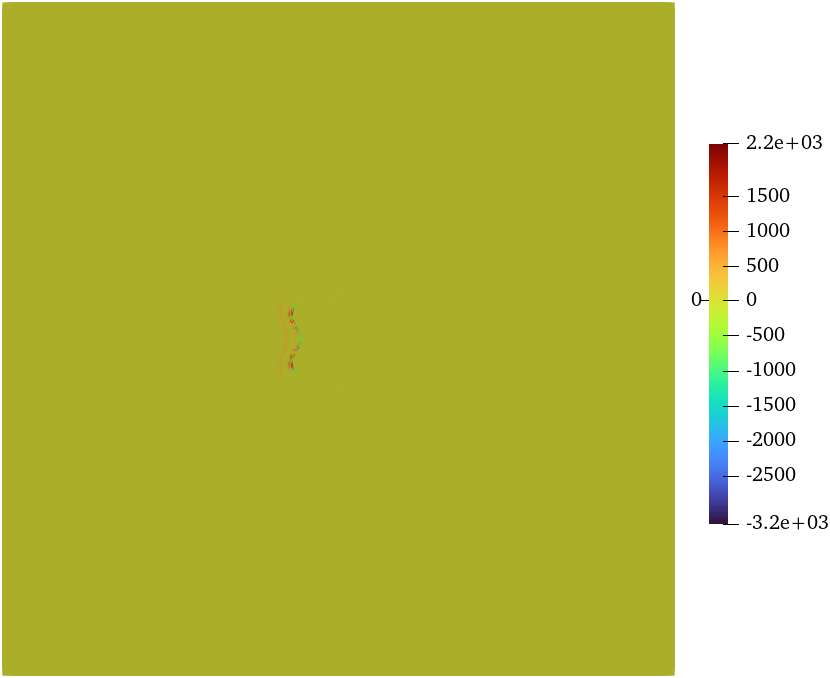}
		\end{minipage}
		\includegraphics[width=0.24\textwidth]{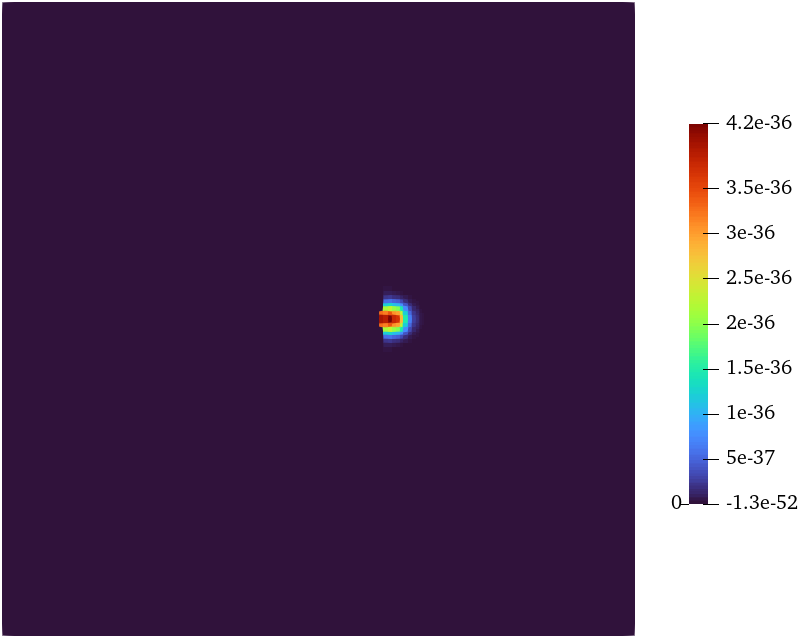}
		\includegraphics[width=0.24\textwidth]{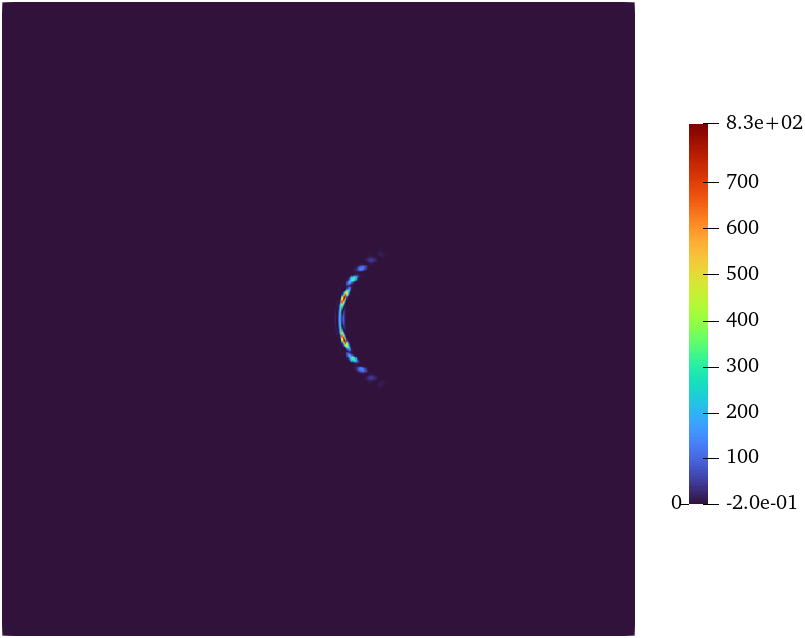}
		\includegraphics[width=0.24\textwidth]{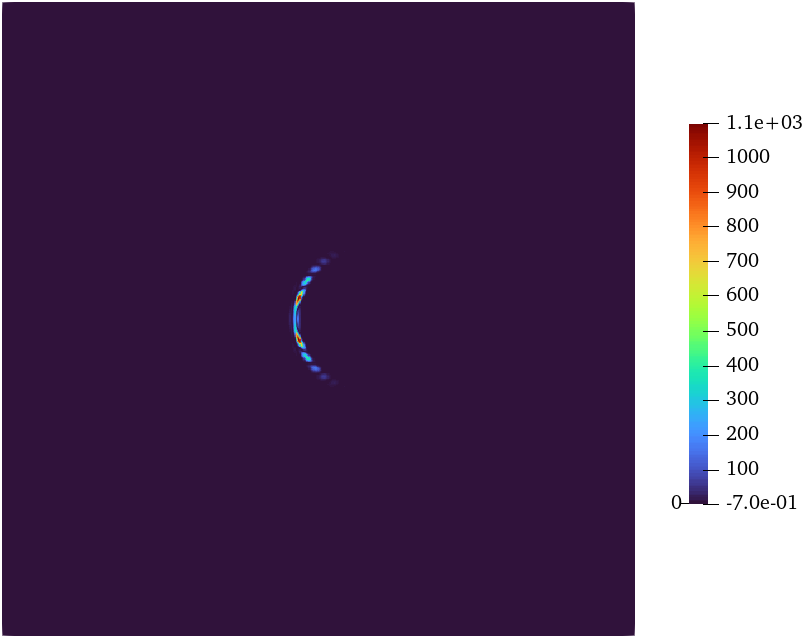}
		\includegraphics[width=0.24\textwidth]{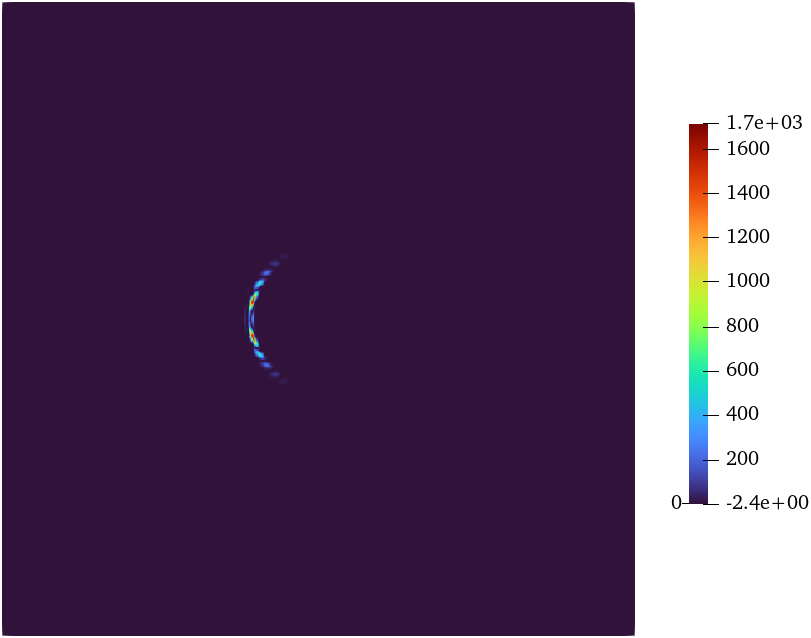}
		\includegraphics[width=0.24\textwidth]{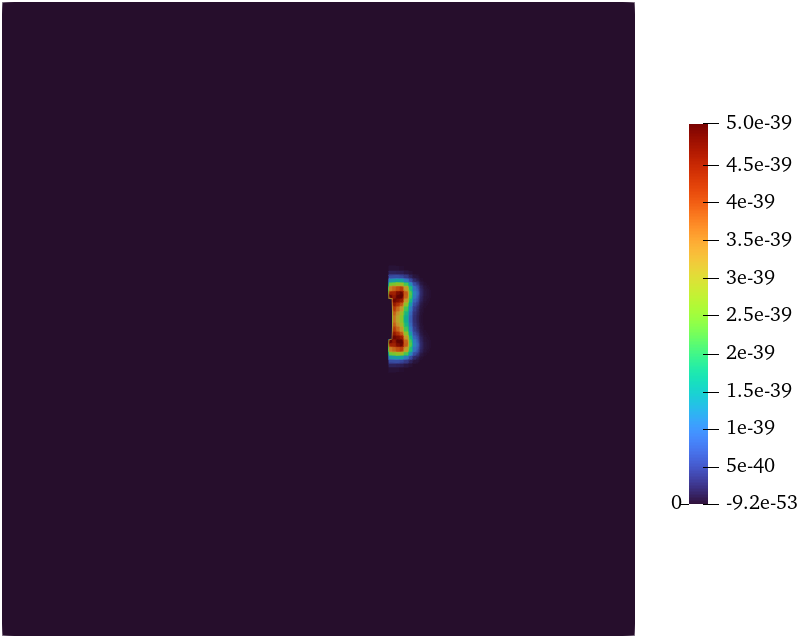}
		\includegraphics[width=0.24\textwidth]{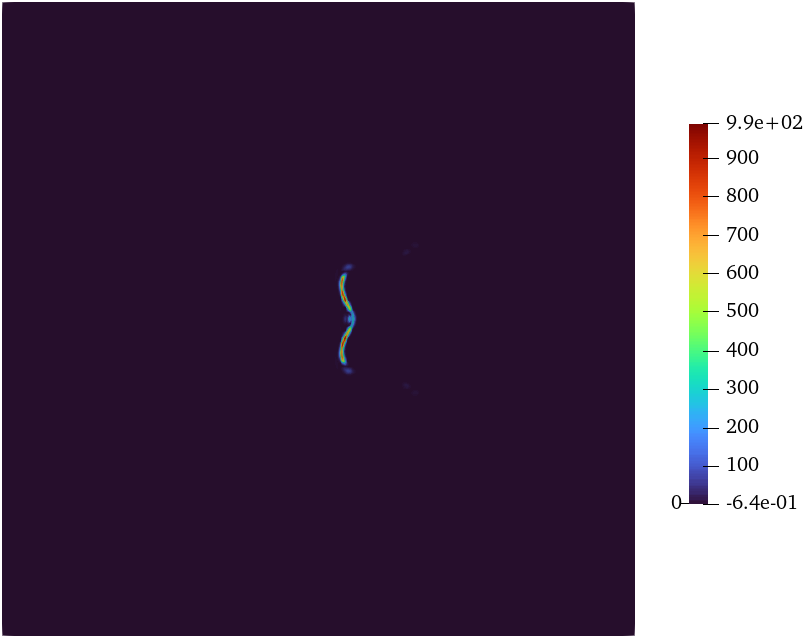}
		\includegraphics[width=0.24\textwidth]{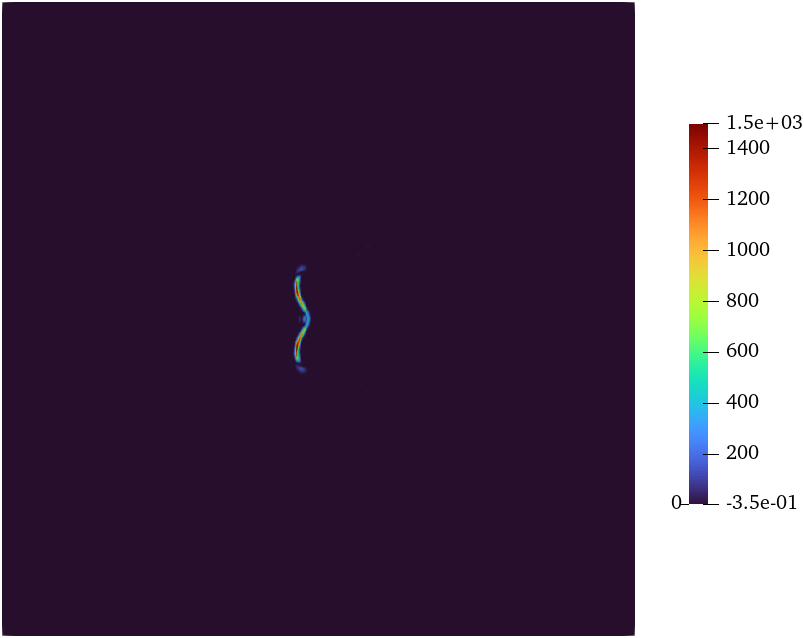}
		\includegraphics[width=0.24\textwidth]{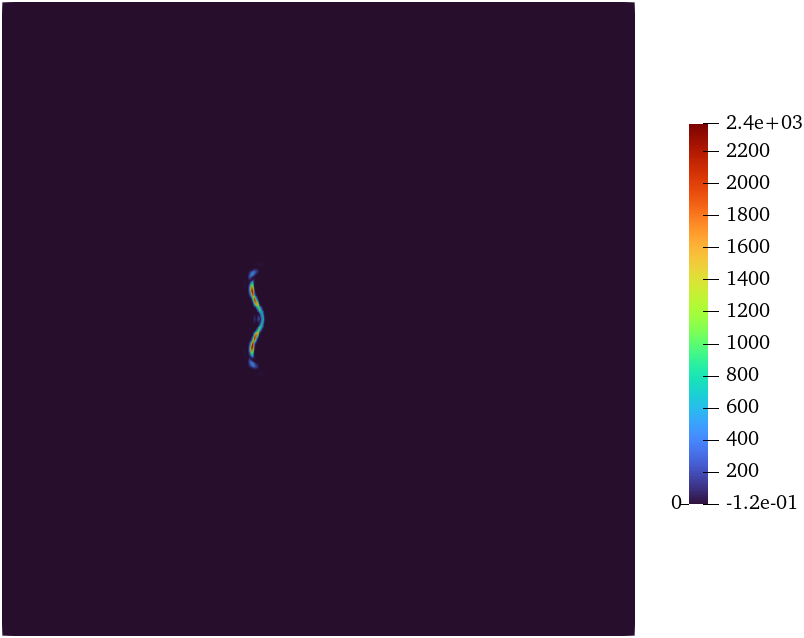}
	\end{center}
	\caption{Snapshots of the density of $\mathcal{R}_{h,{\rm embed}}^n$
		for the four sound-soft moving-object tests. From top to bottom:
		circular object in the moderate-frequency regime, star-shaped
		object in the moderate-frequency regime, circular object in the
		high-frequency regime, and star-shaped object in the
		high-frequency regime. Within each row, the times are $t=0.2, 0.6,
			0.8,$ and $1.0$ from left to right.}\label{fig:residual-density}
\end{figure}

Figure~\ref{fig:residual-density} provides an additional insight into the
remainder term related to the motion of the object and appeared in the fully discrete energy identity. In all
four sound-soft moving-object cases, the density of
$\mathcal{R}_{h,{\rm embed}}^n$ is concentrated in a narrow neighborhood
of the moving diffuse interface, while remaining negligible in the bulk
region and inside the PML. This behavior is consistent with
the analytical form of the remainder term, which is activated only
through time derivatives of $\psi_\varepsilon$ and
$W_\varepsilon$. The high-frequency cases exhibit larger localized
values because sharper wave gradients interact more strongly with the
transport of the interface profile. However, no uncontrolled spreading
of the density function is observed, which supports the numerical stability of
the adaptive fully discrete \PMLDE{} scheme in the moving-interface
setting.

Finally, we present a numerical experiment for a moving ship-shaped object. The
ship is initially centered at $(4.1,2.0)$ and undergoes the same rigid
translation with velocity $(-5,0)$ used in the previous examples. The same
two incident-wave parameter sets are used. Since the motion is uniform, the
sound-hard acceleration correction vanishes.

\begin{figure}[H]
	\begin{center}
		\includegraphics[width=0.24\textwidth]{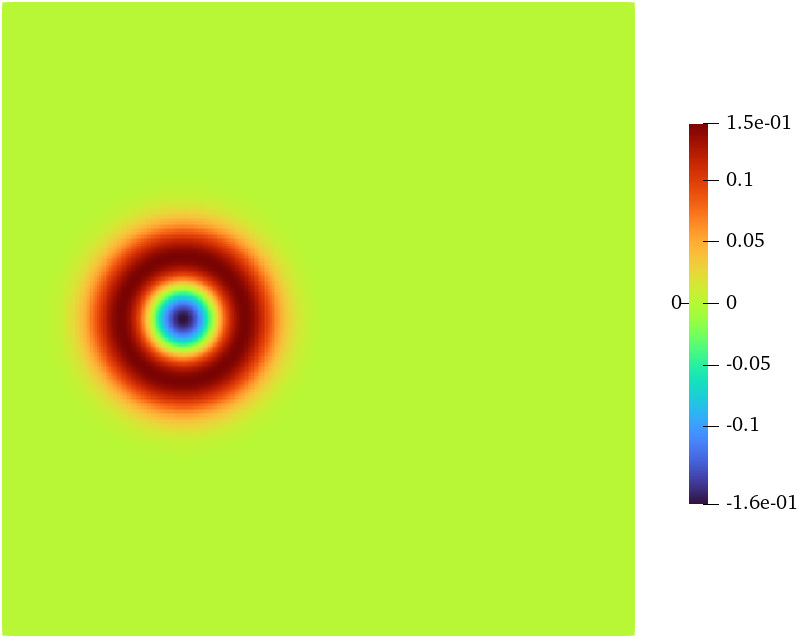}
		\includegraphics[width=0.24\textwidth]{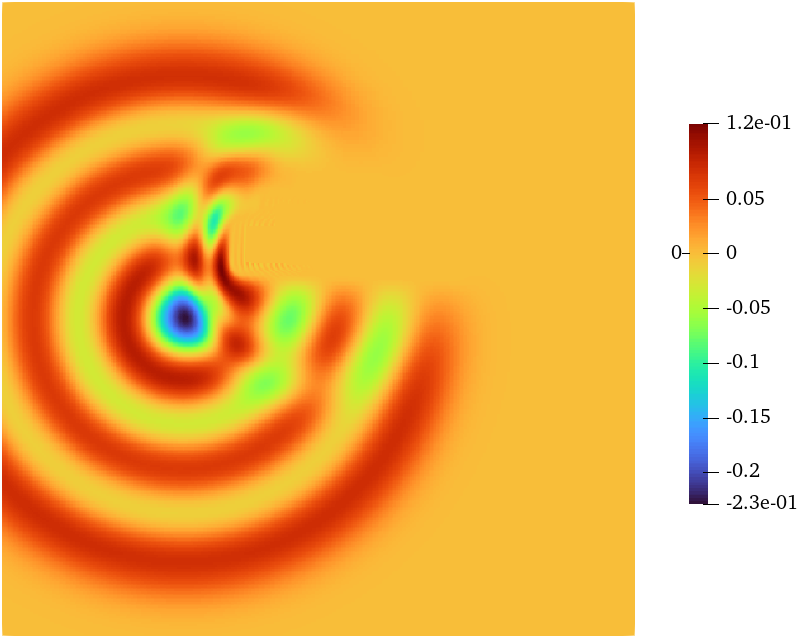}
		\includegraphics[width=0.24\textwidth]{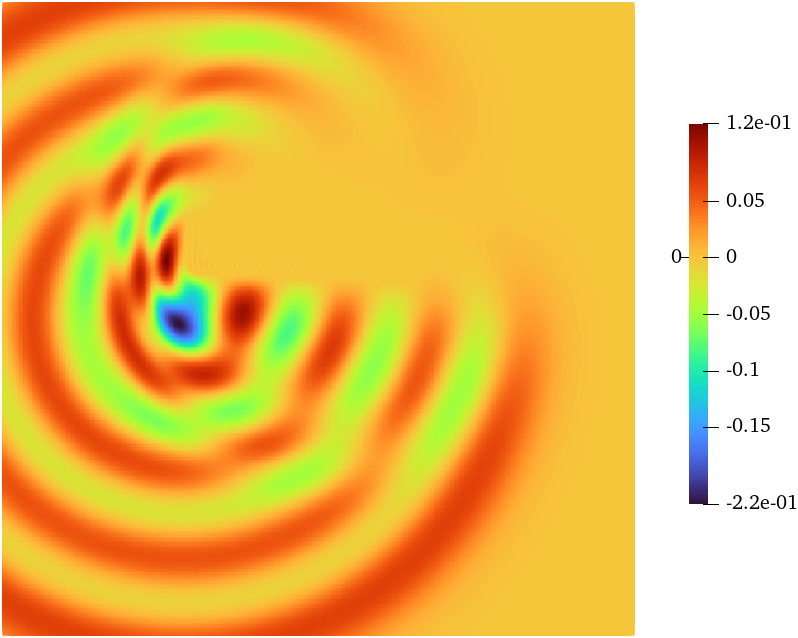}
		\includegraphics[width=0.24\textwidth]{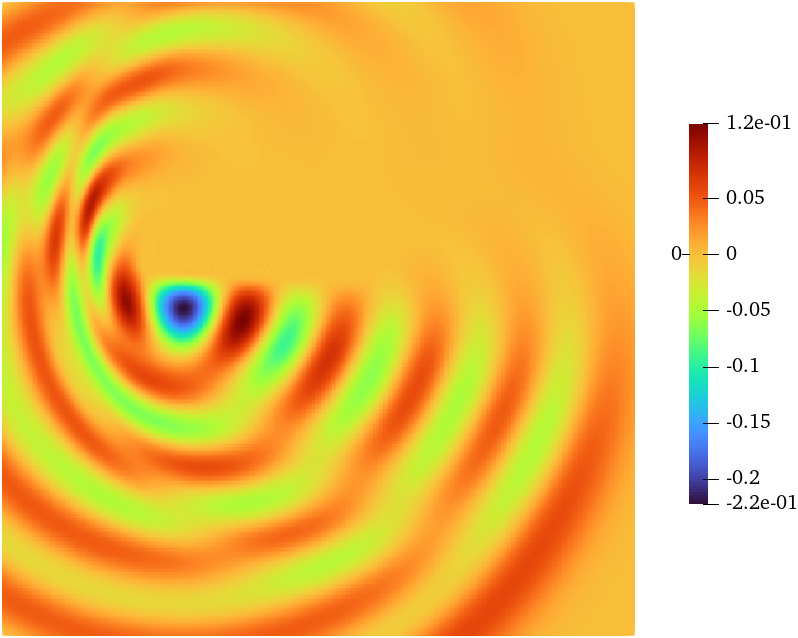}
		\includegraphics[width=0.24\textwidth]{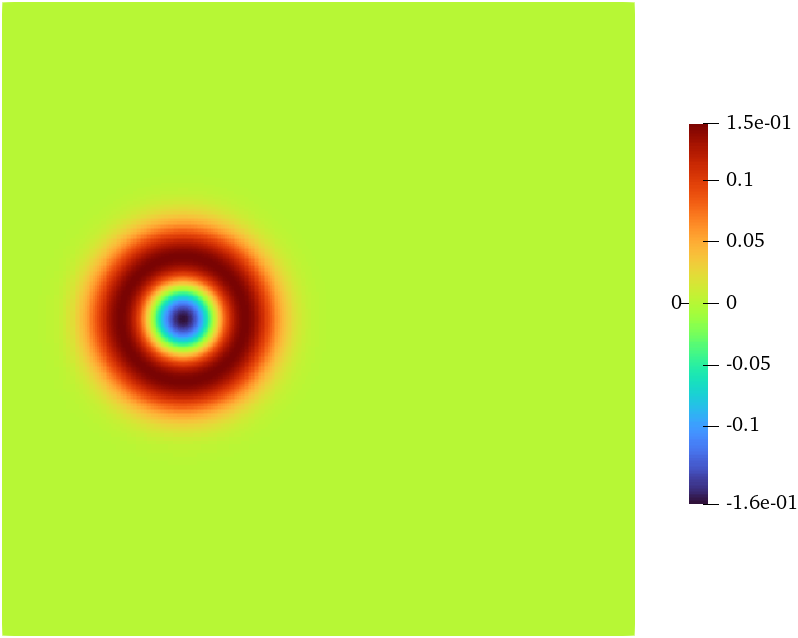}
		\includegraphics[width=0.24\textwidth]{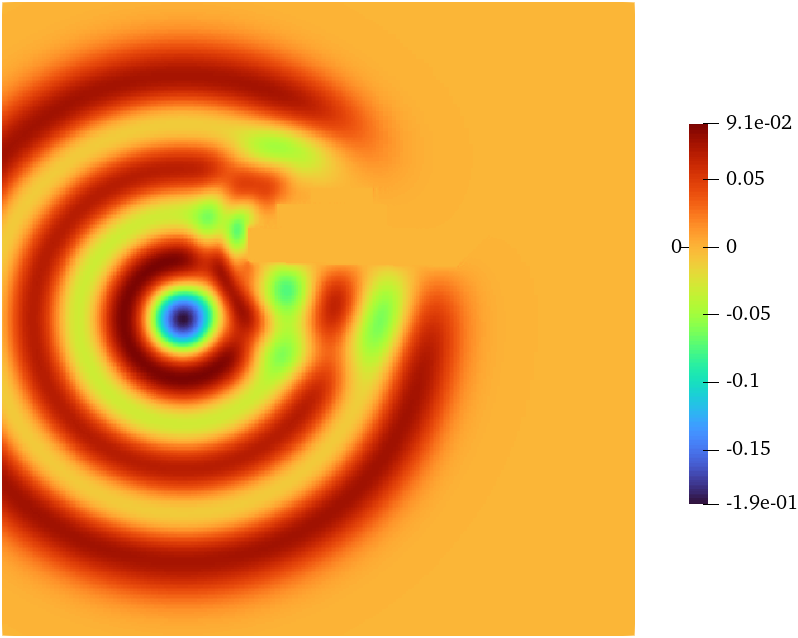}
		\includegraphics[width=0.24\textwidth]{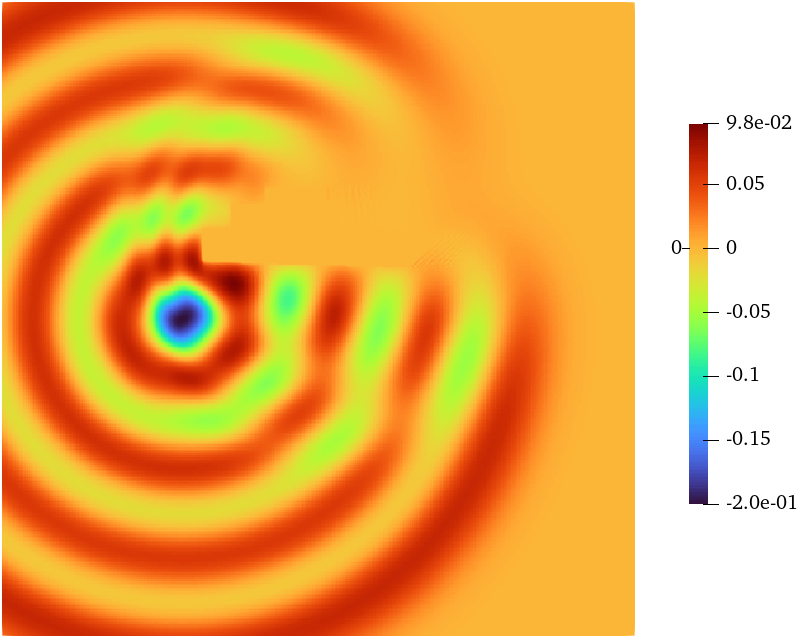}
		\includegraphics[width=0.24\textwidth]{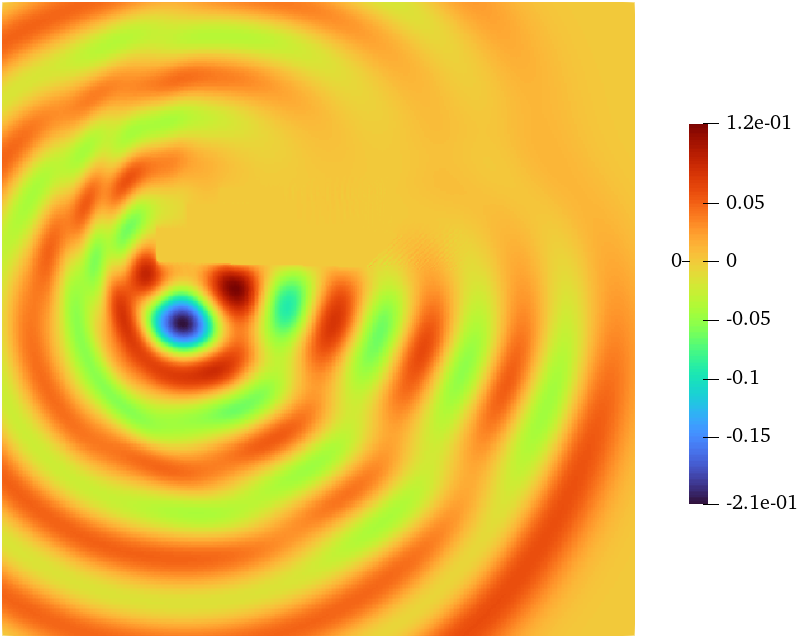}
		\begin{minipage}{0.235\textwidth}
			\hspace*{-0.25cm}
			\includegraphics[width=\linewidth]{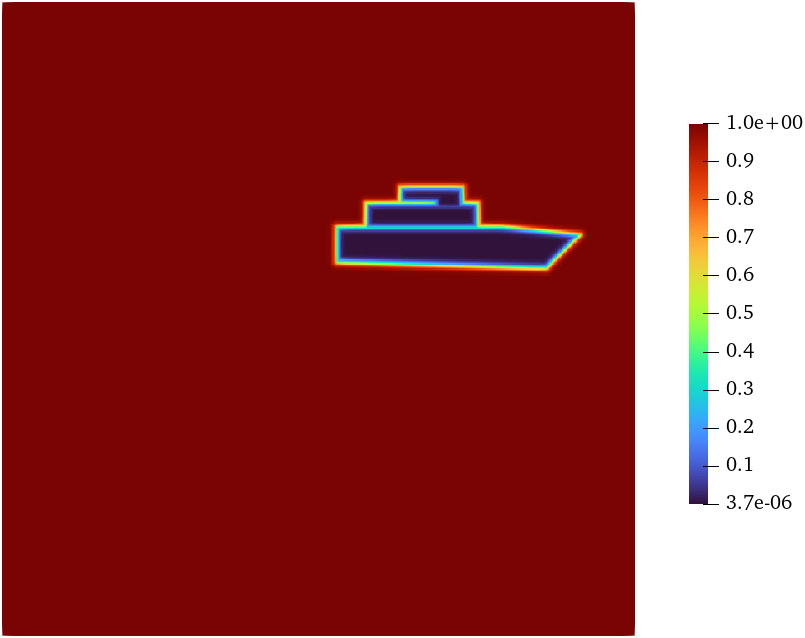}
		\end{minipage}
		\begin{minipage}{0.235\textwidth}
			\hspace*{-0.15cm}
			\includegraphics[width=\linewidth]{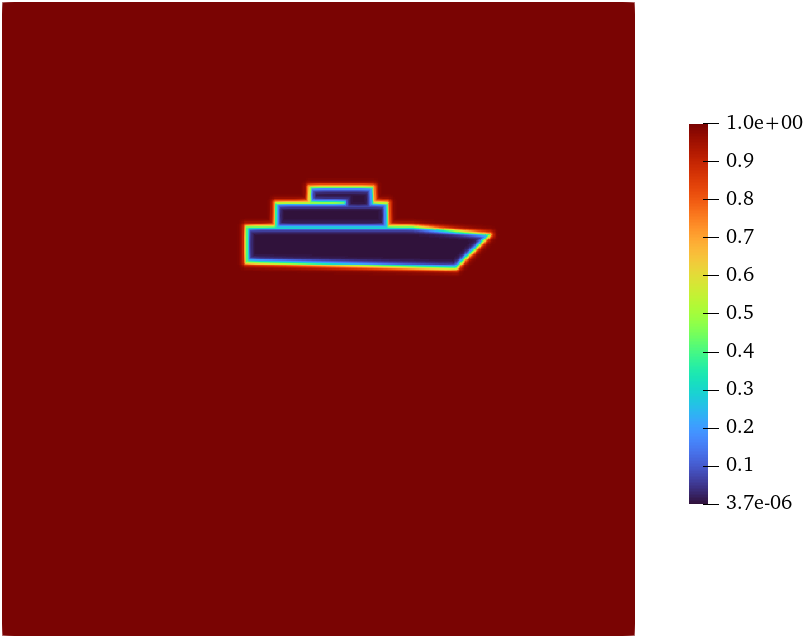}
		\end{minipage}
		\begin{minipage}{0.235\textwidth}
			\hspace*{-0.1cm}
			\includegraphics[width=\linewidth]{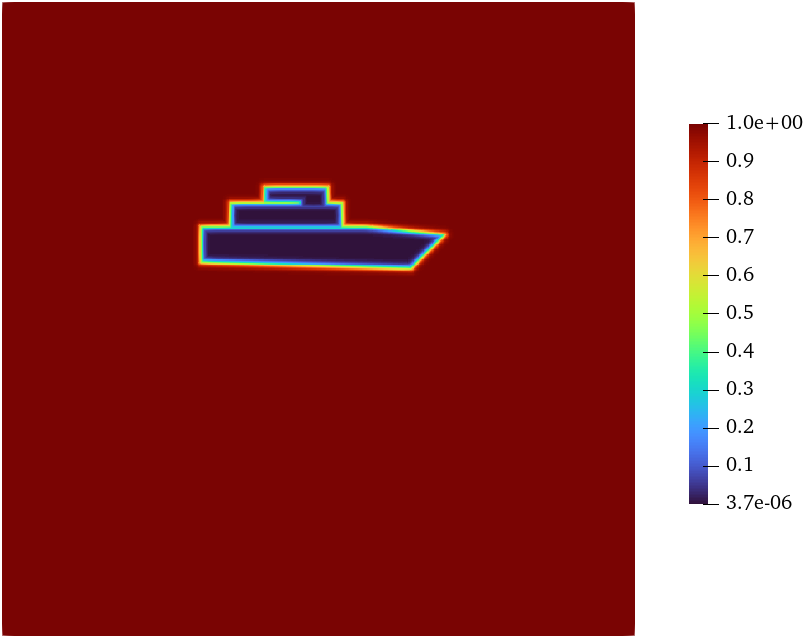}
		\end{minipage}
		\begin{minipage}{0.235\textwidth}
			\hspace*{0.05cm}
			\includegraphics[width=\linewidth]{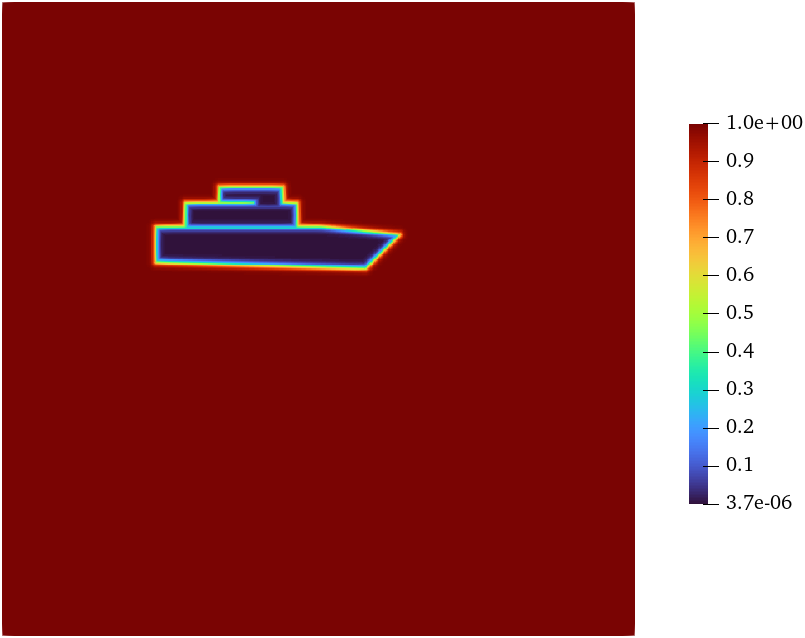}
		\end{minipage}
	\end{center}
	\caption{Pressure snapshots of a moving ship-shaped object in the
		moderate-frequency regime under sound-soft (top row) and sound-hard
		(middle row) boundary conditions at $t = 0.2, 0.6, 0.8,$ and $1.0$
		in $\Omega_{\rm phy}$; the bottom row shows the corresponding
		snapshots of $\psi_\varepsilon$.}\label{fig:moving-simple-ship.}
\end{figure}
\begin{figure}[H]
	\begin{center}
		\includegraphics[width=0.45\textwidth]{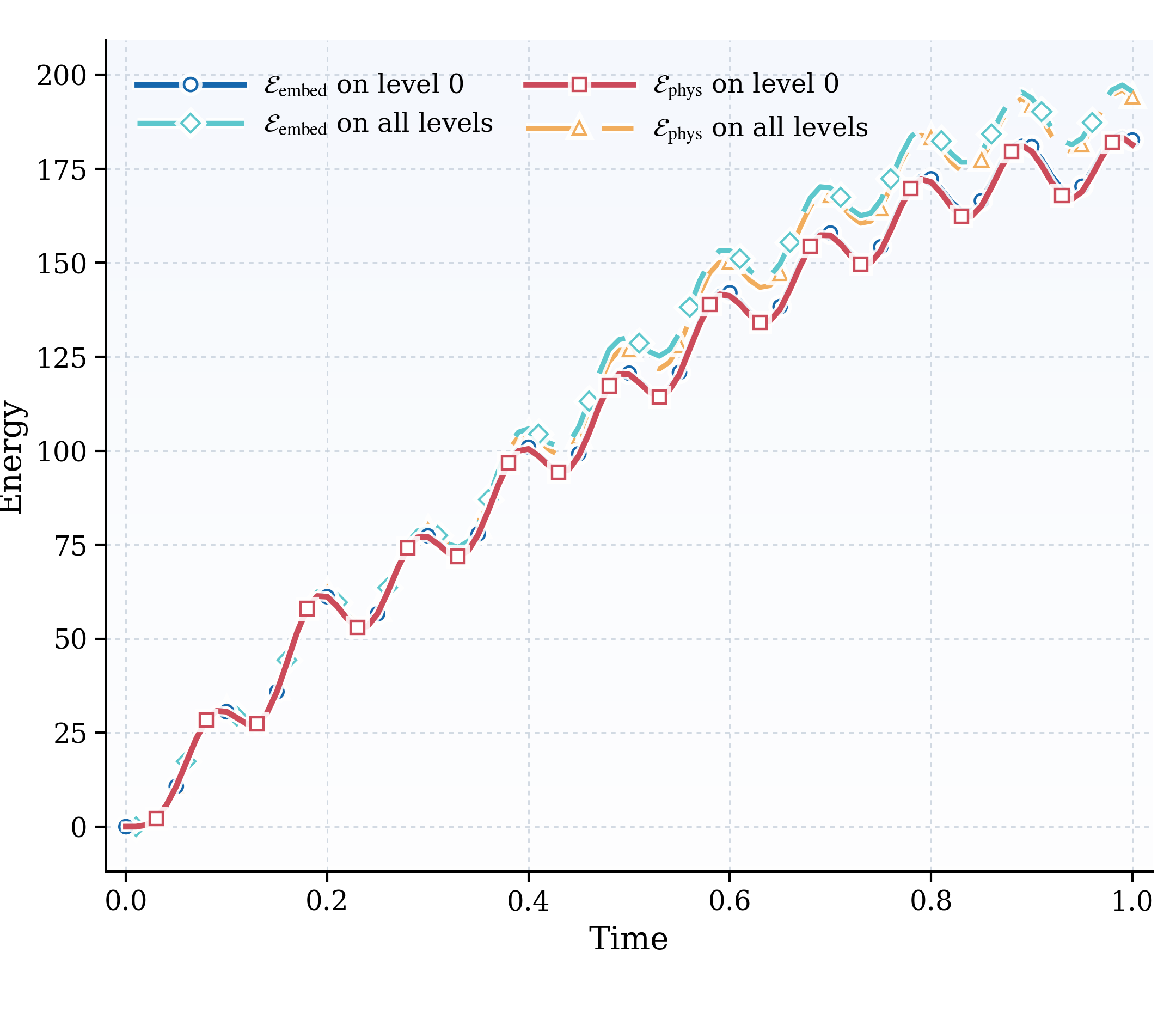}
		\includegraphics[width=0.45\textwidth]{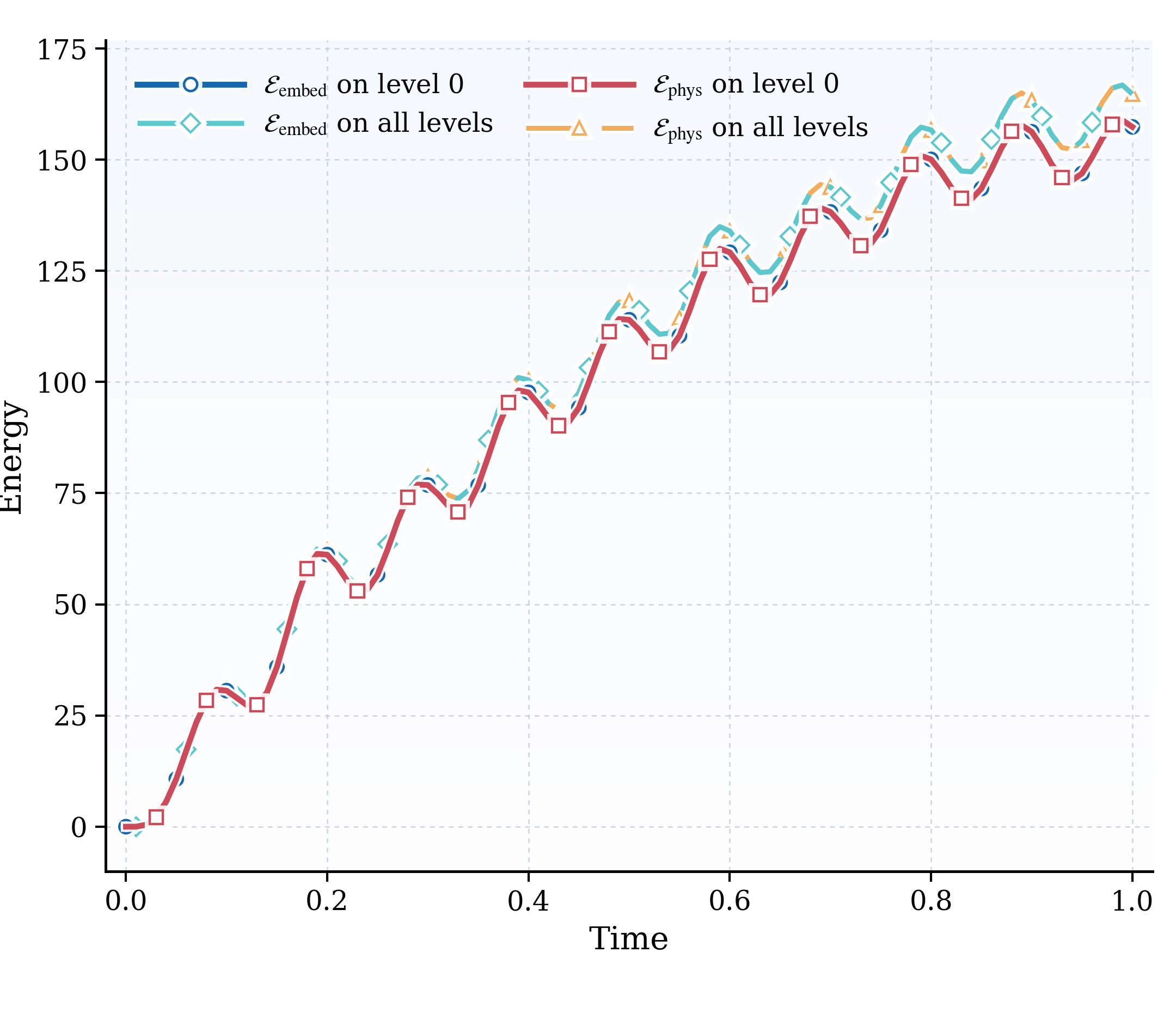}
	\end{center}
	\caption{Energy trajectories for the moving ship-shaped object in the
		moderate-frequency regime. Left: sound-soft case. Right: sound-hard
		case. In each panel, the weighted energy, $\mathcal{E}_{\rm
				embed}$, and the physical energy, $\mathcal{E}_{\rm phys}$, are
		reported both at the coarsest level and at the composite adaptive
		hierarchy.}\label{fig:energy-ship-low}
\end{figure}

\begin{figure}[H]
	\begin{center}
		\includegraphics[width=0.24\textwidth]{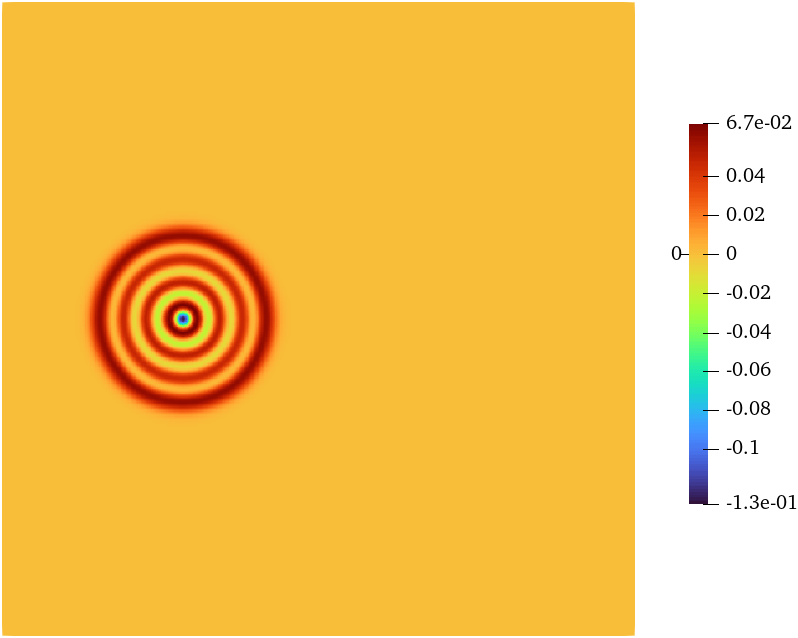}
		\includegraphics[width=0.24\textwidth]{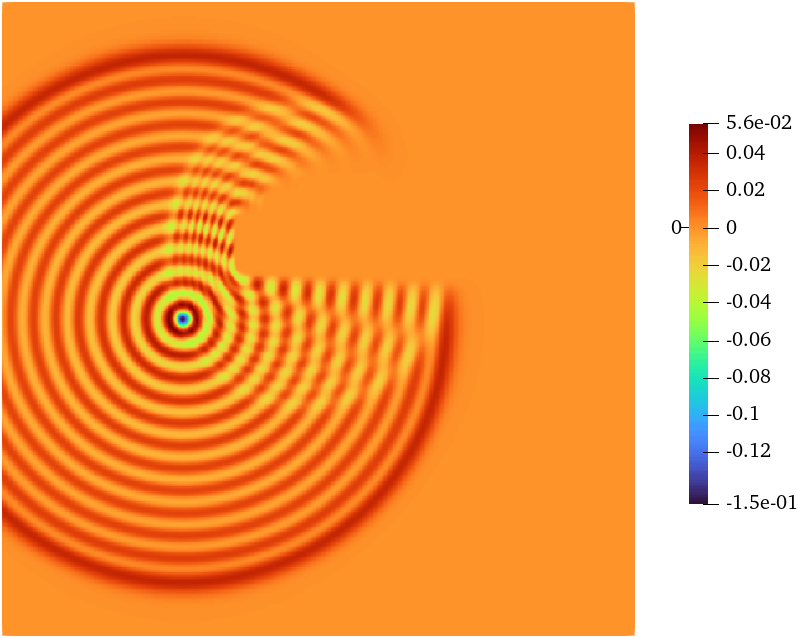}
		\includegraphics[width=0.24\textwidth]{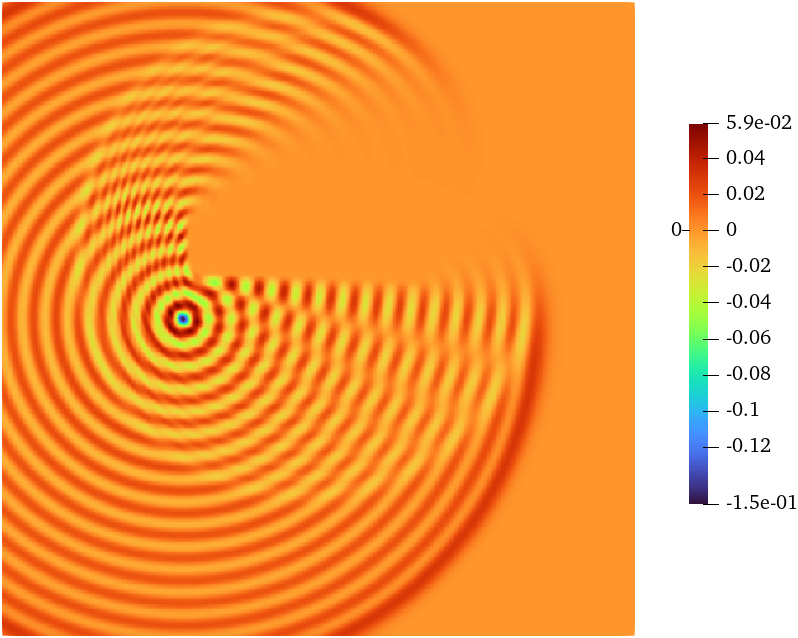}
		\includegraphics[width=0.24\textwidth]{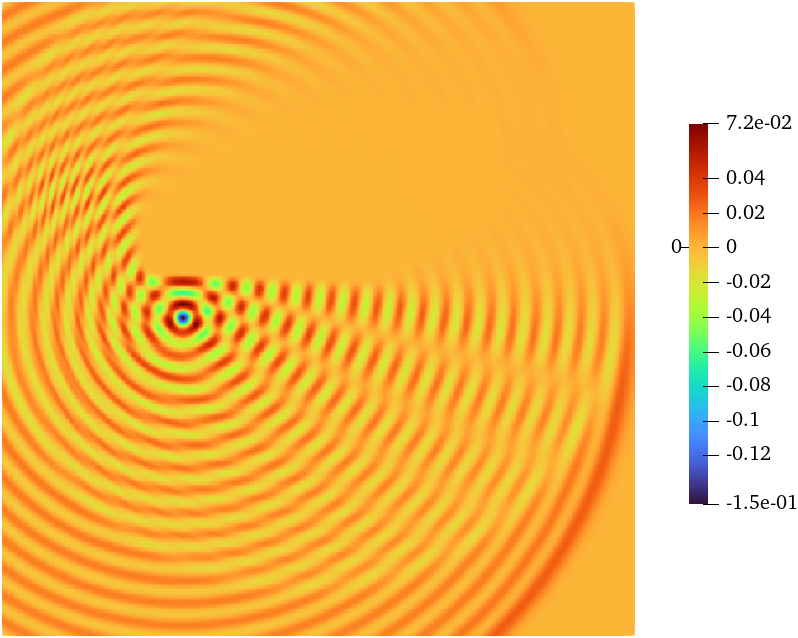}
		\includegraphics[width=0.24\textwidth]{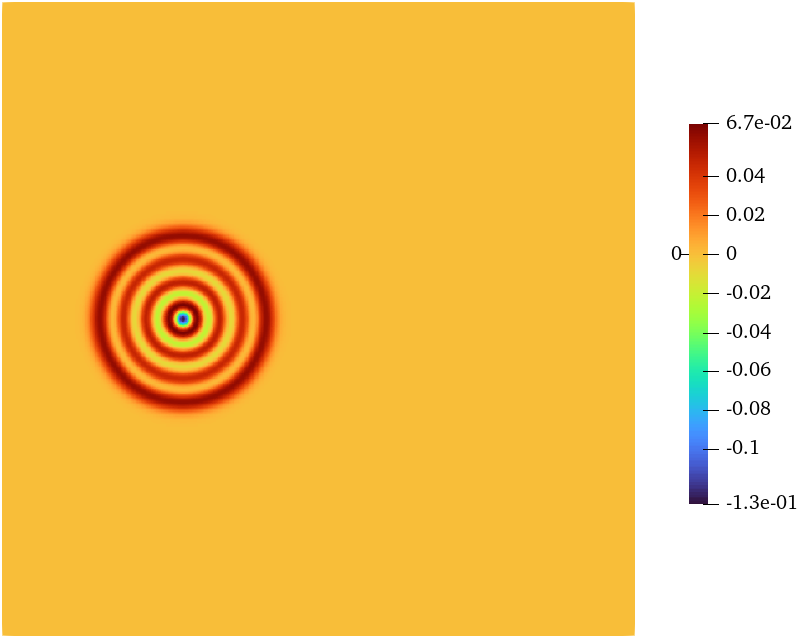}
		\includegraphics[width=0.24\textwidth]{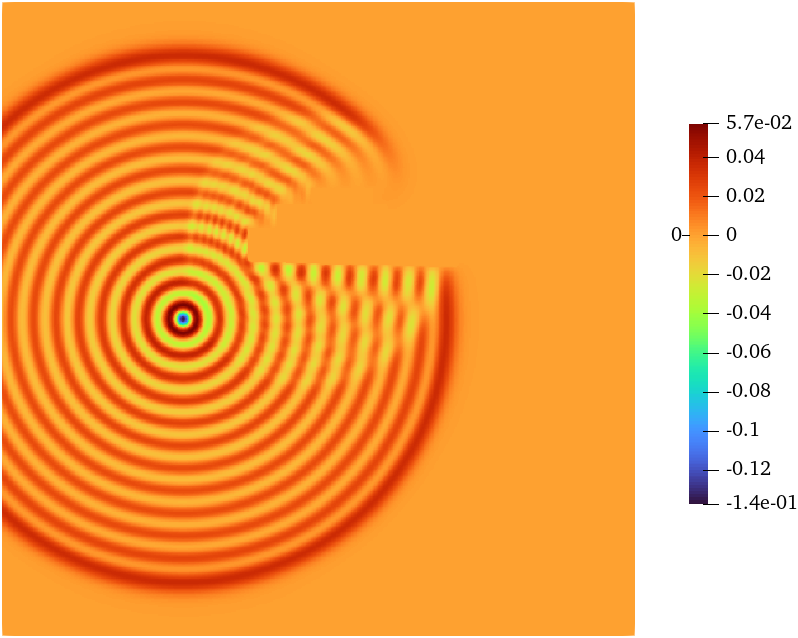}
		\includegraphics[width=0.24\textwidth]{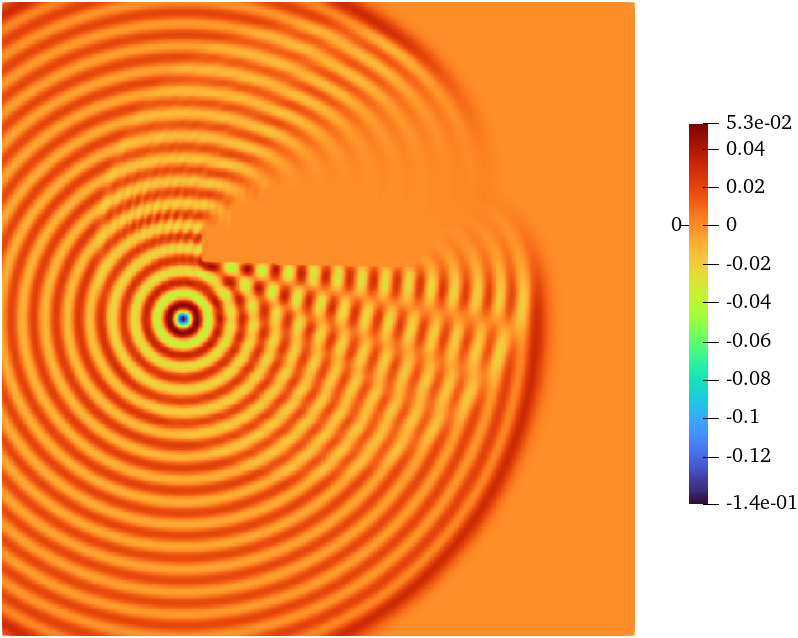}
		\includegraphics[width=0.24\textwidth]{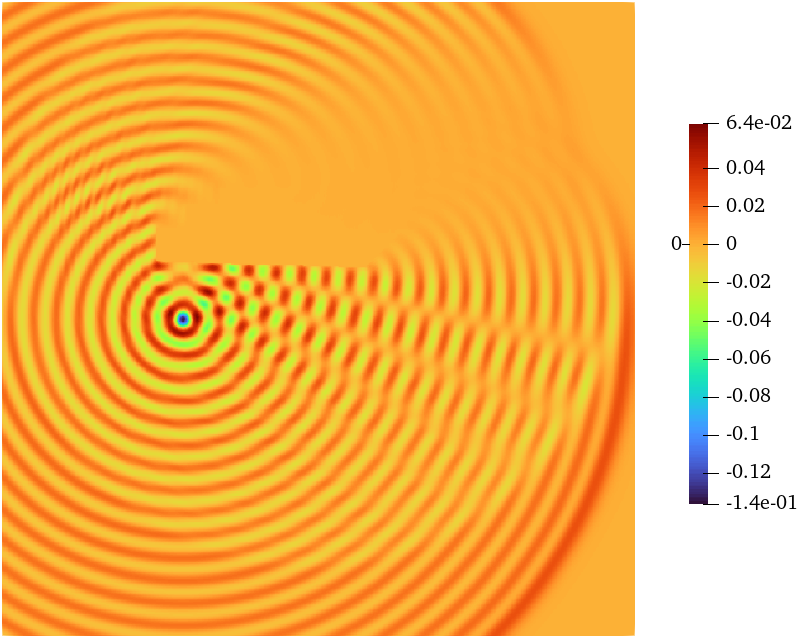}
		\begin{minipage}{0.235\textwidth}
			\hspace*{-0.25cm}
			\includegraphics[width=\linewidth]{./figs/psi_ship.0002.png}
		\end{minipage}
		\begin{minipage}{0.235\textwidth}
			\hspace*{-0.15cm}
			\includegraphics[width=\linewidth]{./figs/psi_ship.0006.png}
		\end{minipage}
		\begin{minipage}{0.235\textwidth}
			\hspace*{-0.1cm}
			\includegraphics[width=\linewidth]{./figs/psi_ship.0008.png}
		\end{minipage}
		\begin{minipage}{0.235\textwidth}
			\hspace*{0.05cm}
			\includegraphics[width=\linewidth]{./figs/psi_ship.0010.png}
		\end{minipage}
	\end{center}
	\caption{Pressure snapshots of a moving ship-shaped object in the
		high-frequency regime under sound-soft (top row) and sound-hard
		(middle row) boundary conditions at $t = 0.2, 0.6, 0.8,$ and $1.0$
		in $\Omega_{\rm phy}$; the bottom row shows the corresponding
		snapshots of $\psi_\varepsilon$.}\label{fig:moving-simple-ship_high.}
\end{figure}
\begin{figure}[H]
	\begin{center}
		\includegraphics[width=0.45\textwidth]{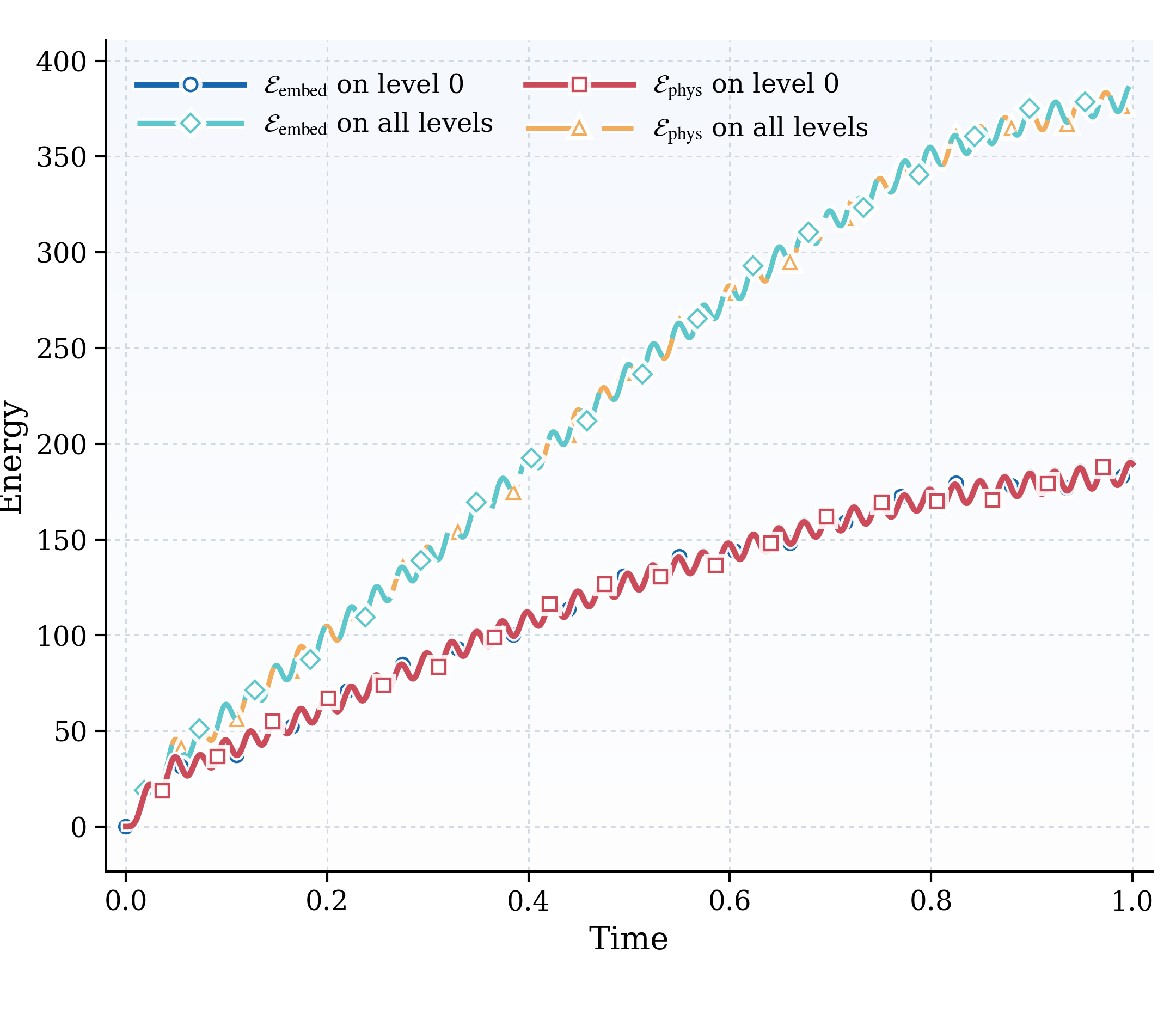}
		\includegraphics[width=0.45\textwidth]{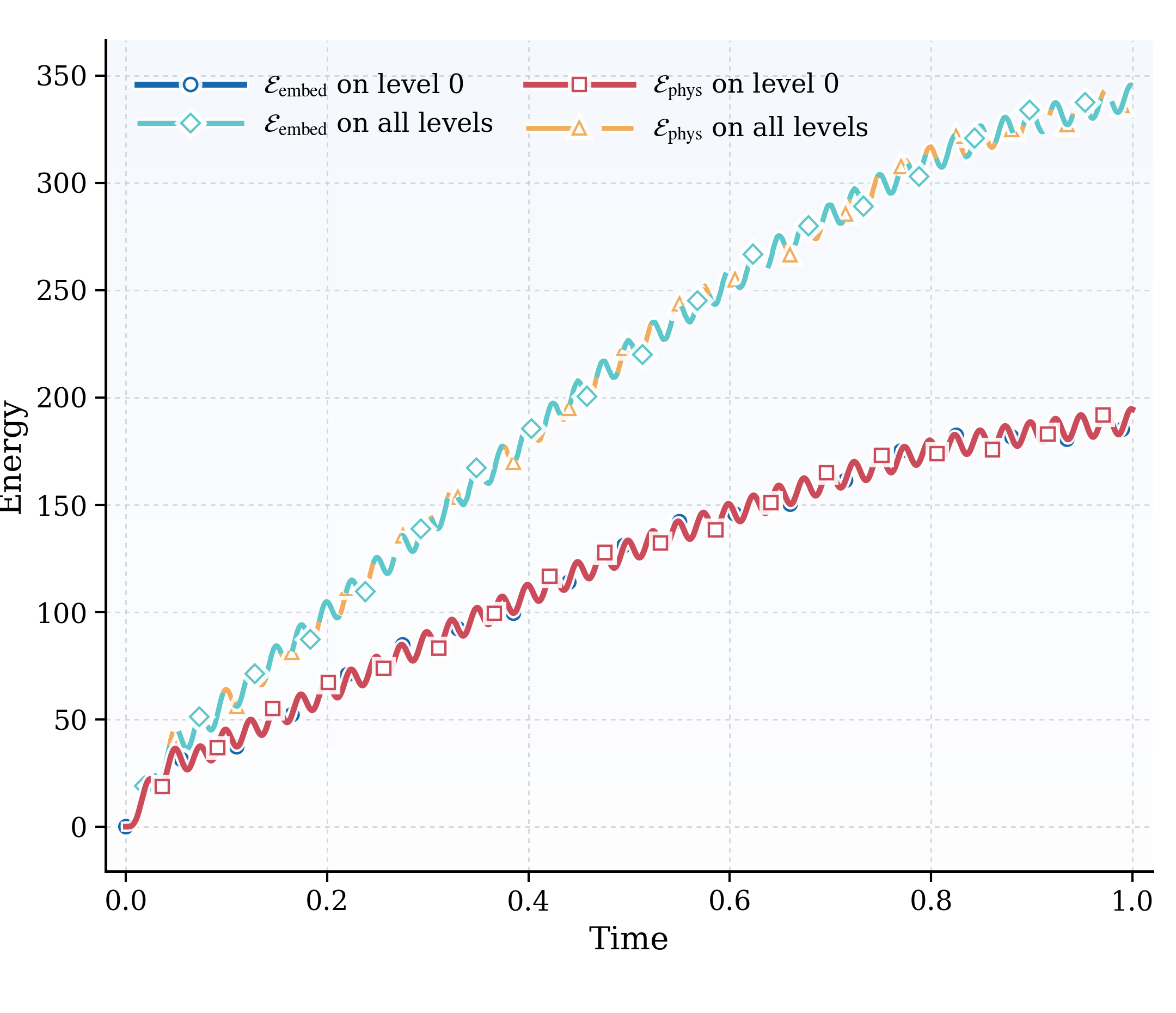}
	\end{center}
	\caption{Energy trajectories for the moving ship-shaped object in the
		high-frequency regime. Left: sound-soft case. Right: sound-hard
		case. In each panel, the weighted energy, $\mathcal{E}_{\rm
				embed}$, and the physical energy, $\mathcal{E}_{\rm phys}$, are
		reported both at the coarsest level and at the composite adaptive
		hierarchy.}\label{fig:energy-ship-high}
\end{figure}

Figures~\ref{fig:moving-simple-ship.} and \ref{fig:moving-simple-ship_high.}
show the scattered wave field for the moving ship-shaped object in the moderate-
and high-frequency regimes, respectively. In both cases, the asymmetric hull produces a
tilted reflected front near the bow and a visibly one-sided wake downstream.
Compared with the circular and star-shaped objects, the slanted bow and the
stepped upper profile lead to a stronger local distortion near the body. This
effect is more evident in the high-frequency regime, where the shorter
wavelength resolves the geometric features more clearly. The bottom rows show
that the embedded indicator $\psi_\varepsilon$ transports the ship smoothly on
the fixed grid, and no visible spurious reflection appears at the outer PML
boundary. The energy curves in Figures~\ref{fig:energy-ship-low} and
\ref{fig:energy-ship-high} follow the same trend as in the circular and
star-shaped tests: the gap between the level-0 and all-level energies is more
pronounced in the high-frequency regime, while $\mathcal{E}_{\rm embed}$ and
$\mathcal{E}_{\rm phys}$ remain close for both boundary conditions.

\section{Conclusion}\label{sec:conclusion}

\noindent \indent We have developed a structure-preserving computational framework for acoustic wave scattering by moving objects. The framework combines a PML reformulation of the wave-scattering problem, a domain-embedding description posed on a fixed computational domain, and a temporal leap-frog scheme derived from a midpoint discretization, together with a spatially adaptive algorithm.

The key analytical result is that the dissipative property of the fixed-geometry PML problem can be explicitly formulated as a gradient flow with a quadratic energy. This energy-dissipative structure is preserved under the proposed leap-frog discretization of the two-field reduced PML-DE system. After domain embedding, the moving-object formulation satisfies a weighted energy balance law in which the contribution of interface motion appears explicitly in the rate of change of the energy. This formulation clarifies the respective roles of PML damping, interface dynamics, and object-interior dissipation in the model.

The resulting  formulation is particularly advantageous for moving-object problems because it avoids body-fitted remeshing while remaining compatible with Cartesian-grid solvers. Through several numerical experiments, we have validated the expected qualitative behavior of the numerical approximation for both static and moving objects under sound-soft and special sound-hard boundary treatments. A broader investigation on the general sound-hard case will be pursued in a subsequent work.

\section*{Acknowledgements}
Xuelong Gu's research is  supported  by
NSF award  OIA-2242812.  and Qi Wang's research is  partially supported  by
NSF awards  OIA-2242812 and DMS-2038080, DOE award DE-SC0025229,
and an SC GAIN-CRP award.

\appendix

\section{Proofs of Lemmas~\ref{lem:constraint} and \ref{lem:reduction}}

\begin{proof}[Proof of Lemma~\ref{lem:constraint}]
	By definition,
	$D_t^+ \bm{r}^n = D_t^+ \bm{\chi}^n - c\nabla D_t^+ p^n - D_t^+
		\bm{\lambda}^n$.
	Substituting \eqref{eq:wave-pml-semi-discrete-component} yields
	\begin{equation}
		\begin{aligned}
			D_t^+ \bm{r}^n
			 & = A_t^+\big(c\nabla
			q^{n}-2\varGamma_1\bm{\chi}^{n}+\varGamma_1 \bm{\lambda}^{n}\big)
			-c\nabla A_t^+ \big(q^{n}-a p^{n}\big)
			-A_t^+
			\big(\varGamma_2\bm{\chi}^{n}-\widetilde{\varGamma}_1\bm{\lambda}^{n}\big) \\
			 & = c a \nabla A_t^+ p^n -
			(2\varGamma_1+\varGamma_2)A_t^+ \bm{\chi}^{n} +
			(\varGamma_1+\widetilde{\varGamma}_1)A_t^+ \bm{\lambda}^{n}                \\
			 & = aA_t^+ (c \nabla p^n - \bm{\chi}^n + \bm{\lambda}^n) = -a
			A_t^+ \bm{r}^n,
		\end{aligned}
	\end{equation}
	where we have used $2\varGamma_1+\varGamma_2=aI$ and
	$\varGamma_1+\widetilde{\varGamma}_1=aI$ in the last step. If
	$\bm{r}^0=0$, a straightforward inductive argument yields
	$\bm{r}^n\equiv 0$.
\end{proof}

\begin{proof}[Proof of Lemma~\ref{lem:reduction}]
	We first derive \eqref{eq:wave-pml-semi-discrete-leap-frog} from
	\eqref{eq:wave-pml-semi-discrete-component}. Rewriting the first equation of
	\eqref{eq:wave-pml-semi-discrete-component} at $t_{n-\frac12}$, applying
	$D_t^+$, and then applying $A_t^-$ to the second equation give
	\begin{equation}\label{eq:thm-reduction-pq}
		\begin{aligned}
			D_t^+ D_t^- p^n & = D_t^+ A_t^- q^n - a D_t^+ A_t^- p^n, \\
			A_t^- D_t^+ q^n & = -b A_t^-A_t^+ p^n + c\nabla\cdot
			A_t^-A_t^+\bm{\chi}^n.
		\end{aligned}
	\end{equation}
	Eliminating $q^n$ yields
	$D_t^2 p^n + aD_{2t}p^n + bA_t^2 p^n = c\nabla\cdot A_t^2\bm{\chi}^n$,
	where we used $D_t^+D_t^- = D_t^2$, $D_t^+A_t^- = D_{2t}$, and
	$A_t^-A_t^+ = A_t^2$. Invoking
	$\bm{\chi}^n=c\nabla p^n+\bm{\lambda}^n$ from Lemma~\ref{lem:constraint}
	in this identity gives the first equation of
	\eqref{eq:wave-pml-semi-discrete-leap-frog}; substituting the same identity
	into the last line of \eqref{eq:wave-pml-semi-discrete-component} gives the
	second equation.

	We next verify \eqref{eq:reconstruct_qn_chin}. The last two
	identities follow
	immediately from Lemma~\ref{lem:constraint}. Substituting
	$\bm{\chi}^n=c\nabla p^n+\bm{\lambda}^n$ into the second equation of
	\eqref{eq:wave-pml-semi-discrete-component} shows that
	$D_t^+ q^n = F^{n+\frac12}$ and $q^{n+\frac12}=A_t^+ q^n$, where the second
	identity follows from \eqref{eq:den-qF} and the first equation of
	\eqref{eq:wave-pml-semi-discrete-component}. These two relations give the
	first two formulas in \eqref{eq:reconstruct_qn_chin}.

	Finally, assume \eqref{eq:wave-pml-semi-discrete-leap-frog} and
	\eqref{eq:reconstruct_qn_chin}. Then these midpoint identities,
	\eqref{eq:den-qF}, and the third line of
	\eqref{eq:reconstruct_qn_chin} imply
	\begin{equation*}
		\begin{aligned}
			D_t^+ p^n                                              & = A_t^+ q^n - aA_t^+ p^n,                          \\
			D_t^+ q^n                                              & = -bA_t^+ p^n + c^2\Delta A_t^+ p^n + c\nabla\cdot
			A_t^+\bm{\lambda}^n,                                                                                        \\
			D_t^+ \bm{\lambda}^n + \varGamma_1 A_t^+\bm{\lambda}^n & =
			c\varGamma_2\nabla A_t^+ p^n,                                                                               \\
			\bm{\chi}^n                                            & = c\nabla p^n + \bm{\lambda}^n.
		\end{aligned}
	\end{equation*}
	Repeating at the discrete level the same algebraic manipulations used to
	derive \eqref{eq:first-order-system} from the continuous second-order model
	then recovers \eqref{eq:wave-pml-semi-discrete-component}. This
	completes the proof.
\end{proof}

\bibliographystyle{abbrv}
\bibliography{ref}

\end{document}